\numberwithin{equation}{section}
\theoremstyle{plain}
\newtheorem{thm}{Theorem}[section]
\newtheorem{prop}[thm]{Proposition}
\newtheorem{lem}[thm]{Lemma}
\newtheorem{question}[thm]{Question}
\theoremstyle{definition}
\newtheorem{defi}[thm]{Definition}
\newtheorem*{defi*}{Definition}
\newtheorem*{ack}{Acknowledgments}
\theoremstyle{remark}
\newtheorem{rmk}[thm]{Remark}
\newcommand{\ra}{\rightarrow}
\newcommand{\cO}{\mathcal{O}}
\newcommand{\cP}{\mathcal{P}}
\newcommand{\fD}{\mathfrak{D}}
\newcommand{\bC}{\mathbb{C}}
\newcommand{\bP}{\mathbb{P}}
\newcommand{\bQ}{\mathbb{Q}}
\newcommand{\bZ}{\mathbb{Z}}
\DeclareMathOperator{\Pic}{Pic}
\DeclareMathOperator{\Supp}{Supp}
\DeclareMathOperator{\Spec}{Spec}
\DeclareMathOperator{\mult}{mult}
\DeclareMathOperator{\diag}{diag}
\DeclareMathOperator{\chara}{char}
\DeclareMathOperator{\rank}{rank}
\newcommand{\val}{\mathit{val}}
\begin{document}
	
\title[On the Standard Models of Degree 4 del Pezzo Fibrations]{On the Standard Models of del Pezzo Fibrations of Degree four}
\author{Natsume Kitagawa}
\address{Graduate School of Mathematics, Nagoya University, Furocho Chikusa-ku, Nagoya, 464-8602, Japan}
\email{natsume.kitagawa.e6@math.nagoya-u.ac.jp}
\maketitle

  \begin{abstract}
  	Corti defined the notion of standard models of del Pezzo fibrations, and studied their existence over $\bC$ with a fixed generic fibre in \cite{Cor}. In this paper, we prove the existence of standard models of del Pezzo fibrations of degree $4$ in characteristic $>2$. To show this, we use the notion of Koll\'ar stability, which was introduced in \cite{Kol1} and \cite{AFK}.
  \end{abstract}

  \tableofcontents

  \section{Introduction}
  
    In birational geometry, \emph{Mori fibre spaces} (MFS) are essential objects. They appear naturally as outputs of the minimal model program (MMP). Finding their good birational models is an interesting problem and is useful for birational geometry.
    In dimension three, there are three cases of MFS: Fano threefolds with Picard number $1$, conic bundles, and del Pezzo fibrations. In this paper, we study the \emph{standard models of del Pezzo fibrations}, which were defined by Corti in \cite{Cor}.

    \subsection{Standard models of del Pezzo fibrations}
    
      In this section, we work over an algebraically closed field $k$ of arbitrary characteristic. Let $C$ be an essentially smooth irreducible one-dimensional scheme over $k$ and $K$ be the function field of $C$. We say that a flat projective morphism $\pi:X\ra C$ is a \emph{del Pezzo fibration} of degree $d$ if its generic fibre $X_K$ is a smooth del Pezzo surface over $K$ of degree $d:=K_{X_K}^2$.
      
      Corti defined the \emph{standard models of del Pezzo fibrations} and discussed their the existence in \cite{Cor}.
      
      \begin{defi}
      	Suppose that $\pi:X\ra C$ is a del Pezzo fibration of degree $d$ with the generic fibre $X_K$. We say that $\pi:X\ra C$ is a \emph{standard model} of $X_K$ over $C$, or a \emph{standard del Pezzo fibration of degree} $d$, if the following conditions are satisfied:
      	\begin{enumerate}
      		\item $X$ has only terminal singularities,
      		\item $\pi$ has integral fibres, and
      		\item $-aK_X$ is a $\pi$-ample line bundle, where
      		$$a=
      		\begin{cases}
      			1\quad(d\geq 3),\\
      			2\quad(d=2),\\
      			6\quad(d=1).
      		\end{cases}$$
      	\end{enumerate}
      \end{defi}
  
      \begin{thm}\cite{Cor}\label{CortiThm}
      	Assume $k=\bC$. Let $C$ be a smooth curve over $k$ and $K$ be the function field of $C$. Suppose $X_K$ be a smooth del Pezzo surface of degree $d\geq 2$ over $K$. Then there exists a standard model $\pi:X\ra C$ of del Pezzo fibrations with the generic fibre $X_K$.
      \end{thm}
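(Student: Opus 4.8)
The strategy is the classical one: starting from an arbitrary projective model of $X_K$ over $C$, one cleans up the total space by a relative minimal model program, passes to the relative anticanonical model to make $-aK$ relatively ample, and finally carries out an explicit birational surgery over the finitely many ``bad'' closed points of $C$ to force terminal singularities and integral fibres. For the first step, since $X_K$ is a del Pezzo surface of degree $d\geq 2$ the system $|-aK_{X_K}|$ is very ample and embeds $X_K\hra\bP^N_K$; taking the scheme-theoretic closure of $X_K$ inside $\bP^N_C$ gives a projective model $V\ra C$ with generic fibre $X_K$. Normalising $V$ (an isomorphism over $\eta$, as $X_K$ is smooth) and then resolving singularities, we obtain a smooth projective threefold $Y\ra C$ with generic fibre $X_K$. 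Running the $K_Y$-MMP over $C$, which is legitimate over $\bC$ in dimension three, every step is an isomorphism over $\eta$ because the relative cone $\bNE(Y/C)$ is spanned by curves lying in closed fibres, never by curves of $X_K$; and the program terminates because $K_Y$ is not pseudo-effective over $C$ ($\kappa(X_K,K_{X_K})=-\infty$). The outcome is a $\bQ$-factorial terminal threefold $W\ra C$ with generic fibre $X_K$ on which $-K_W$ is $\pi$-nef and $\pi$-big.

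Next, since $W$ is terminal and $-K_W$ is $\pi$-nef and $\pi$-big, the base-point-free theorem makes $-K_W$ $\pi$-semiample and the relative section ring $\bigoplus_{m\geq 0}\pi_*\cO_W(-mK_W)$ finitely generated over $\bC$; set $X:=\Proj_C\bigl(\bigoplus_{m\geq 0}\pi_*\cO_W(-mK_W)\bigr)$, with $\pi:X\ra C$. By construction $-K_X$ is $\pi$-ample and $\bQ$-Cartier, and one checks that $-aK_X$ is a genuine line bundle, so condition (3) holds. The contraction $W\ra X$ is an isomorphism over $\eta$ (the relative ample model of the already ample $-aK_{X_K}$ is $X_K$ itself), so the generic fibre of $X$ is again $X_K$; and $W\ra X$ contracts exactly the curves $\Gamma$ with $-K_W\cdot\Gamma=0$, i.e.\ with $K_W\cdot\Gamma=0$, hence it is crepant and $X$ has canonical singularities. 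What is not yet guaranteed is that $X$ is terminal and that its closed fibres are integral.

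For the last step, over a dense open $U\subseteq C$ the morphism $\pi$ is smooth with del Pezzo fibres and $X$ is smooth there, so only finitely many closed points $c_1,\dots,c_r$ require attention: it suffices to modify $X$ birationally over $C$, trivially away from the $c_i$, so that near each $c_i$ the total space becomes terminal and the central fibre becomes integral, all while keeping $-aK$ $\pi$-ample. Working over the local ring at a fixed $c=c_i$, one enumerates the possibilities for the central fibre $X_c$ and for the non-terminal points of $X$ over $c$ — this is where the degree $d$ is used, the list of $\bQ$-Gorenstein degenerations of a degree-$d$ del Pezzo surface with $-K$ ample being finite and explicit. For each entry one produces an explicit birational link over $c$ — a flop, an ``elementary'' transformation, or a short composition of these — that improves the singularities of the total space towards terminal and makes $X_c$ integral, and one verifies that $\pi$-ampleness of $-aK$ survives (automatic for flops, to be checked for the remaining links). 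Treating $c_1,\dots,c_r$ one at a time yields a standard model $\pi:X\ra C$ of $X_K$.

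The soft steps above use $\chara k=0$ only through resolution of singularities, the three-dimensional MMP, and finite generation; the serious content is the last step, where one must (i) classify the singular fibres and non-terminal configurations that occur on the relative anticanonical model of a degree-$d$ del Pezzo fibration, and (ii) for each, exhibit a repairing link over the base point that neither destroys $\pi$-ampleness of $-aK$ nor disturbs the repairs already made at other points. This classification-and-bookkeeping is the main obstacle, and it is exactly the part that is delicate; in the present paper the degree-$4$ case is handled by recasting it in terms of Koll\'ar stability.
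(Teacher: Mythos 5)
This statement is quoted from \cite{Cor}; the paper gives no proof of it (it explicitly defers to Corti and only remarks that his argument uses the threefold MMP and Kawamata--Viehweg vanishing), so there is no internal proof to compare against. Judged on its own, your proposal correctly reproduces the soft outer shell of Corti's strategy --- reduce to a local statement over a DVR, resolve, run a relative MMP (legitimately an isomorphism over the generic point), pass to the relative anticanonical model to secure $\pi$-ampleness of $-aK$ --- and correctly identifies that this only delivers a canonical total space with possibly non-integral central fibre.

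However, the final step is a genuine gap, not a compressed argument: ``one enumerates the possibilities,'' ``one produces an explicit birational link,'' and ``one verifies that $\pi$-ampleness survives'' together constitute the entire content of Corti's theorem, and you supply none of it. Two specific problems. First, the object that must be classified is not the finite list of $\bQ$-Gorenstein degenerations of a degree-$d$ del Pezzo surface, but the local germs of the threefold total space along the non-terminal locus together with the scheme structure of the central fibre (non-reduced or reducible fibres, planes and quadrics inside the fibre, compound Du Val versus worse points); these are controlled by the arithmetic of the model over the DVR, not by the abstract surface. Second, a ``repair one bad point at a time'' scheme needs a termination argument: each link can create new singular points or new bad components of the fibre, so one must exhibit an invariant (in Corti's proof, a carefully chosen discrepancy/multiplicity of the central fibre) that strictly decreases, and one must check compatibility of $-aK$ being Cartier with $a=2$ in degree $2$, which is where Kawamata--Viehweg vanishing actually enters. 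Without these two ingredients the proposal is a plan for a proof rather than a proof; note also that the present paper's whole point is that this repair step is hard enough that, for degree $4$, it is replaced wholesale by the Koll\'ar-stability argument.
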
 
      
      Corti predicted that, when the generic fibre is a del Pezzo surface of degree $1$, there also exist standard models of del Pezzo fibrations. This conjecture was solved in \cite{AFK}.

      \begin{thm}\cite{AFK}\label{AFKdegree1}
      	Assume that $k$ is an algebraic closed field with $\chara k\neq 2, 3$. Let $C$ be a smooth curve over $k$ and $K$ be the function field of $C$. Suppose $X_K$ is a smooth del Pezzo surface of degree $1$ over $K$. Then there exists a standard model $\pi:X\ra C$ of del Pezzo fibrations with generic fibre $X_K$.
      \end{thm}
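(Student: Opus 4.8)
The plan is to spread out a model over a dense open subset of $C$, reduce the problem over the remaining finitely many points to a question about del Pezzo fibrations over a discrete valuation ring, and there use Kollár stability to pin down the correct local model. Since $\chara k\neq 2,3$, the anticanonical ring $\bigoplus_{n\ge 0}H^0(X_K,-nK_{X_K})$ of a degree one del Pezzo surface over $K$ is generated by two elements $x,y$ in degree $1$, an element $z$ in degree $2$ and an element $w$ in degree $3$, with a single relation in degree $6$ which, after completing the square in $w$ and the cube in $z$, takes the Weierstrass form
\[
w^2=z^3+A(x,y)\,z+B(x,y),\qquad A\in K[x,y]_4,\ B\in K[x,y]_6 ,
\]
so $X_K$ is a hypersurface of degree $6$ in $\bP_K(1,1,2,3)$. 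Because $A$ and $B$ are homogeneous of positive degree, such a hypersurface never contains either of the singular points $[0:0:1:0]$ and $[0:0:0:1]$ of the ambient weighted projective space, so it lies in the smooth locus of the bundle and is Gorenstein, being locally a hypersurface in a smooth fourfold. Spreading out gives a dense open $U\subseteq C$ and a smooth del Pezzo fibration $X_U\to U$ with smooth fibres, which already satisfies (1)--(3) over $U$; it then suffices, for each $p\in C\setminus U$, to construct a flat extension of $X_U$ over the discrete valuation ring $R=\cO_{C,p}$ satisfying (1)--(3), since such local models glue along the common generic fibre $X_K$ into a del Pezzo fibration over $C$.

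So fix such a $p$ and clear denominators, so that now $A\in R[x,y]_4$ and $B\in R[x,y]_6$; any two presentations of $X_K$ over $R$ differ by a change of basis in $(x,y)$ over $R$, a translation $z\mapsto z+\lambda(x,y)$ with $\lambda\in R[x,y]_2$, and a rescaling $(A,B)\mapsto(u^4A,u^6B)$ with $u\in K^\times$. Each presentation gives a hypersurface $X\subseteq\bP_R(1,1,2,3)$ which is flat over $R$ and, as above, Gorenstein. Among these competing extensions, Kollár stability in the form developed in \cite{Kol1} and \cite{AFK} singles out a unique \emph{stable} one --- in essence the minimal Weierstrass model, characterised by the impossibility of lowering the vanishing orders of $(A,B)$ along the relevant monomial valuations by the transformations above. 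From the Kollár stability package I would take both the existence of this stable extension (by a direct minimisation, in the spirit of Tate's algorithm for elliptic surfaces) and its uniqueness (the analogue of the uniqueness of stable reduction).

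It remains to check that the glued model $X\to C$ satisfies the three axioms. Condition (2) is automatic: each fibre is cut out in $\bP(1,1,2,3)$ by a polynomial of the shape $w^2-g(x,y,z)$ with $g$ of odd degree in $z$, hence squarefree and irreducible, so the fibres are integral. Condition (3) follows from adjunction in the $\bP(1,1,2,3)$-bundle, which gives $\omega_{X/C}\cong\cO_X(-1)\otimes\pi^*(\text{a line bundle on }C)$; since $X$ lies in the smooth locus of the bundle, $\cO_X(1)$ restricts to a $\pi$-ample line bundle on $X$, so $-K_X$, and a fortiori $-6K_X$, is a $\pi$-ample line bundle. The real content --- and the step I expect to be the main obstacle --- is condition (1): one must show that the stable model has only terminal singularities. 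Locally everywhere $X$ is a hypersurface $\bar w^2=\bar z^3+\bar A(t,\bar y)\,\bar z+\bar B(t,\bar y)$ in a smooth fourfold, hence has compound du Val singularities, and a Gorenstein threefold singularity is terminal exactly when it is an \emph{isolated} compound du Val point; so the heart of the argument is to run through the possible local normal forms --- in particular the analogues of Kodaira's additive fibre collisions and the non-isolated degenerations --- and to verify that the minimality enforced by Kollár stability rules out precisely those which fail to be isolated. This classification of the singularities of stable Weierstrass hypersurfaces is where $\chara k\neq 2,3$ is used essentially, so that the Weierstrass normal form and the compound du Val dichotomy behave as in characteristic zero; granting it, gluing the local stable models to $X_U$ yields the desired standard model.
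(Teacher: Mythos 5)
This statement is quoted from \cite{AFK}; the paper you are reading contains no proof of it, so the only meaningful comparison is with the argument of \cite{AFK} itself (and with the analogous degree-$4$ argument carried out in Sections 3--5 of this paper). Your overall strategy --- spread out over a dense open $U\subseteq C$, reduce to a DVR at each remaining point as in Remark \ref{ReductionOverDVR}, put $X_K$ in Weierstrass form $w^2=z^3+Az+B$ in $\bP_K(1,1,2,3)$, and select a Koll\'ar-semistable integral model --- is exactly the strategy of \cite{AFK}. Your verifications of conditions (2) and (3) are sound: every Weierstrass fibre is monic of odd degree in $z$, hence $w^2-g$ is irreducible in characteristic $\neq2$, and $X$ misses the two singular points of the weighted projective bundle, so $\omega_{X/C}\cong\cO_X(-1)$ is an honest anti-ample line bundle.

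The gap is that the two steps carrying all the mathematical content are announced rather than proved. First, the existence of a semistable model is asserted ``by a direct minimisation, in the spirit of Tate's algorithm''; what is actually needed is a termination argument, e.g.\ exhibiting a semi-invariant (a discriminant-type section) whose $t$-valuation strictly drops under every destabilising substitution, as in Proposition 3.8 of this paper for pencils of quadrics. Without naming that invariant and proving the valuation inequality, existence is not established. Second, and more seriously, you explicitly defer the verification that condition (1) holds --- ``granting it, gluing\ldots yields the desired standard model'' --- but this is the theorem. One must enumerate the ways a Weierstrass threefold can acquire non-isolated or non-cDV singularities and show each one violates the semistability inequality for some weight system; this case analysis is the bulk of \cite{AFK} and is precisely what Sections 4--5 of the present paper do in the degree-$4$ setting. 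Finally, your appeal to ``a Gorenstein threefold singularity is terminal exactly when it is an isolated compound du Val point'' is a characteristic-zero statement; in positive characteristic the implication you need (isolated cDV $\Rightarrow$ terminal) requires justification, which is why both \cite{AFK} and this paper route the argument through Koll\'ar's elephant criterion (Theorem \ref{CanonicalElephant}): one produces an elephant through the singular point with an isolated du Val singularity, rather than invoking the cDV dichotomy directly. As written, the proposal is a correct roadmap with its destination missing.
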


      MMP for threefolds runs also in characteristic $p>5$ by \cite{Bi}, \cite{BW}, \cite{CTX} and \cite{HX}. Thus we are interested in the following question:

      \begin{question}
      	In positive characteristics, does there exist a standard model of del Pezzo fibrations as a birational model for a given del Pezzo fibration $\pi:X\ra C$ ?
      \end{question}

      Regarding this problem, we deal the case of degree $4$ del Pezzo fibrations. We obtain the following result.

      \begin{thm}\label{MainThm}
      	Suppose that $k$ is an algebraic closed field with $\chara k\neq 2$. Let $C$ be a smooth curve over $k$ and $K$ be the function field of $C$. Suppose $X_K$ be a smooth del Pezzo surface of degree $4$ over $K$. Then there exists a standard model $\pi:X\ra C$ of del Pezzo fibrations with the generic fibre $X_K$.
      \end{thm}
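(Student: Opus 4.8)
The plan is to realise every model of $X_K$ over $C$ as a relative complete intersection of two quadrics in a $\bP^4$-bundle, to observe that such a presentation automatically supplies condition (3), and then to invoke Koll\'ar stability, following \cite{Kol1} and \cite{AFK}, to adjust the model so that it also has terminal singularities and integral fibres; this route bypasses the minimal model program, which is why $\chara k\neq2$ rather than $\chara k>5$ is enough.

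\textbf{Reduction to quadric pencils.} Since $d=4$, the divisor $-K_{X_K}$ is very ample and presents $X_K\hra\bP^4_K$ as a complete intersection whose homogeneous ideal is generated by a pencil of quadrics, namely the $2$-dimensional space $\ker\bigl(H^0(\bP^4_K,\cO(2))\to H^0(X_K,-2K_{X_K})\bigr)$. Spreading this out, one picks a rank-$5$ bundle $E$ on $C$ and a rank-$2$ subbundle $\Lambda\subset\Sym^2E^\vee$ restricting at the generic point to this space of quadratic relations, and sets $\bar X:=V(\Lambda)\subset\bP_C(E)$; over a dense open of $C$ this is a flat family of complete intersections of two quadrics with fibre the anticanonical del Pezzo surface of degree $4$. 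Such an $\bar X$ is Gorenstein, being a codimension-$2$ complete intersection in a smooth scheme, and by adjunction $-K_{\bar X}$ coincides with $\cO_{\bP_C(E)}(1)|_{\bar X}$ up to twisting by a line bundle pulled back from $C$, hence is a $\pi$-ample line bundle; so condition (3), the case $a=1$, holds for every presentation of this shape. It therefore remains to modify the pair $(E,\Lambda)$, keeping the generic fibre equal to $X_K$, so as to obtain condition (1) (terminal singularities) and condition (2) (integral fibres); flatness over $C$ then follows automatically, since a scheme whose generic fibre and all closed fibres are integral is itself integral, hence flat over the smooth curve $C$.

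\textbf{Elementary transformations and Koll\'ar stability.} Two presentations of $X_K$ of the above shape differ by \emph{elementary transformations} of $(E,\Lambda)$ centred at closed points $t\in C$: over the local ring $R:=\cO_{C,t}$, such a transformation conjugates the pencil of quadratic forms by an element of $\mathrm{GL}_5$ defined over $\mathrm{Frac}(R)$, together with a change of basis of the pencil, subject to the transformed forms remaining integral over $R$; this performs an elementary modification of $E$ at $t$ but changes neither $X_K$ nor the family away from $t$. Following \cite{Kol1} and \cite{AFK}, to each local pencil $(q_0,q_1)$ over such an $R$ one attaches a notion of \emph{Koll\'ar (semi)stability}, measuring its non-degeneracy against all monomial valuations in suitable coordinates. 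The two facts I need are: \emph{(i)} if $(E,\Lambda)$ is Koll\'ar semistable at every closed point of $C$, then $\bar X$ has terminal singularities and all fibres of $\pi$ are integral; and \emph{(ii)} for any presentation one can reach, point by point, a Koll\'ar semistable one after finitely many elementary transformations. Granting (i) and (ii), applying the procedure of (ii) at each of the finitely many non-semistable points of $C$ produces a standard model $\pi:X\ra C$.

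\textbf{The local analysis, the main obstacle.} The crux is (i), and it is here that the hypothesis $\chara k\neq2$ is used. Given a Koll\'ar semistable pencil of quadrics $(q_0,q_1)$ in five variables over a DVR $R$ with residue field $k$, I would bring it into a normal form: over $k$, a pencil of quadratic forms is classified --- since $\chara k\neq2$, quadratic forms correspond to symmetric matrices and the Kronecker--Weierstrass theory of pencils of symmetric bilinear forms applies, the invariant being the Segre symbol --- and Koll\'ar semistability bounds how degenerate the reduction, hence its discriminant quintic in $\bP^1$, may be; in particular it rules out non-reduced or reducible central fibres, which gives condition (2). Lifting the normal form to $R$ and writing explicit local equations of the total space, each singular point is then identified as a compound Du Val singularity, hence terminal, giving condition (1); $\chara k\neq2$ is what makes these quadratic-form computations, and the Du Val classification, behave as in characteristic zero. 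Termination in (ii) is obtained by producing a nonnegative integer invariant of the local model --- a normalised instability defect --- that strictly decreases under the stabilising transformation. Putting the steps together yields a model $\pi:X\ra C$ satisfying (1)--(3), which is the theorem; I expect the enumeration of the Koll\'ar semistable normal forms together with the case-by-case check of terminality to be the most laborious part.
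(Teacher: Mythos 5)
Your overall architecture --- reduce to a DVR by descent, present $X_K$ as a pencil of quadrics in $\bP^4$, define a Koll\'ar-type semistability for pencils, obtain a semistable model by a terminating improvement process, and show semistable models are standard --- is exactly the paper's. Condition (3) being automatic by adjunction on a normal $(2,2)$-intersection, and termination of (ii) via a monotone integer invariant (the paper uses the $t$-valuation of the discriminant $D(\det(\lambda A+\mu B))$ of the binary quintic attached to the pencil), are both as in the paper. The problem is that your fact (i) is where essentially all of the work lies, and the method you propose for it has a genuine gap.

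Classifying the reduced pencil $(\bar q_0,\bar q_1)$ over $k$ by its Segre symbol and then ``lifting the normal form to $R$'' cannot determine the singularities of the total space $X$: these depend on the $t$-adic higher-order terms of the lift, not only on the isomorphism class of the central fibre, and two lifts of the same normal form can produce a terminal point in one case and a non-isolated or non-canonical singularity in the other. The paper argues in the opposite direction: assuming $X$ has a bad singularity, it extracts divisibility conditions $t^a\mid\lambda_{i,j}$ on the coefficients and exhibits an explicit destabilizing weight system; already ruling out a codimension-$2$ singular locus requires the classification of non-normal integral $(2,2)$-intersections from \cite{LPS}, not just a bound on the degeneracy of the discriminant quintic. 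Moreover, ``identified as a compound Du Val singularity, hence terminal'' silently assumes that isolated cDV points are terminal in positive characteristic; this is not Reid's characteristic-zero argument, and the paper must instead invoke \cite[Corollary 11]{Kol2} and, to apply it, construct by hand for each singular point $P$ an elephant $E\in|-K_X|$ defined over $R$, smooth on the generic fibre, passing through $P$ with an \emph{isolated} du Val singularity there --- using semistability once more, via further destabilizing weight systems, to exclude the configurations in which no such elephant exists. That construction occupies all of Section 5 and is the heart of the proof; your proposal does not indicate how such an elephant would be produced, nor why its singularity at $P$ is isolated and du Val.
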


      To prove theorem \ref{CortiThm}, Corti used MMP of threefolds and Kawamata-Vieweg vanishing, which are not available in positive or low characteristics. Hence it seems to be difficult to prove Theorem \ref{MainThm} in a similar way as Corti did, particularly in low characteristics. Therefore we use the notion of \emph{Koll\'ar stability}, which was introduced in \cite{AFK}. Theorem \ref{AFKdegree1} was proved as an application of Koll\'ar stability.

     \begin{rmk}\label{ReductionOverDVR}
     	Using techniques of descent, in order to prove the existence of standard models, we only have to show the existence of standard models $\pi:X\ra C=\Spec R$ with a fixed generic fibre $X_K$, where $R$ is the local ring of a closed point on a curve. See \cite[Remark 1.3]{AFK} and \cite{Vi}.
     \end{rmk}

    \subsection{Semistability over curves}
    
      Koll\'ar defined the notion of semistability for hypersurfaces over $1$-dimensional regular schemes in \cite{Kol1}. This is analogous to GIT-stability for hypersurfaces. As an application, He explained the existence of standard models of degree $3$ del Pezzo fibrations, from the view of stability for hypersurfaces over curves.
      
      \begin{thm}\cite{Kol1}
      	Suppose that $k$ is an algebraic closed field of arbitrary characteristic.
      	\begin{enumerate}
      		\item Let $C$ be a smooth curve over $k$ and $K$ be the function field of $C$. Suppose that $X_K$ is a smooth del Pezzo surface of degree $3$ over $K$. Then there exists a semistable del Pezzo fibration $\pi:X\ra C$ with generic fibre $X_K$.
      		\item Any semistable del Pezzo fibration of degree $3$ is a standard model. 
      	\end{enumerate}
      \end{thm}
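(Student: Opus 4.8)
The plan is to reduce, via Remark \ref{ReductionOverDVR}, to the case $C=\Spec R$ with $R$ a discrete valuation ring with fraction field $K$ and residue field $k$. Since $X_K$ has degree $3$, its anticanonical system embeds it as a smooth cubic surface $X_K\subset\bP^3_K=\bP(V)$ for a $4$-dimensional $K$-vector space $V$, cut out by a cubic form $F$, well-defined up to scalar. An integral model with generic fibre $X_K$ is the same datum as a free $R$-lattice $M\subset V$ of rank $4$: one sets $\bP(M)\cong\bP^3_R$ and lets $X_M$ be the closure of $X_K$, equivalently one rescales $F$ to a primitive form over $M$ and reduces modulo the maximal ideal to obtain the special fibre $\bar X_M=\{\bar F=0\}\subset\bP^3_k$. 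Koll\'ar semistability is the DVR (non-archimedean) analogue of GIT-semistability for this configuration: the condition that no degeneration of the lattice, encoded by a weighted flag playing the role of a one-parameter subgroup of $\operatorname{PGL}(V)$, makes the weighted order of vanishing of $F$ exceed the balanced value dictated by the Hilbert--Mumford numerical function.

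For part (1) I would argue by minimisation/descent on the space of lattices. Fix an initial lattice $M_0$ and attach to each model the valuation $\ord_R(\mathrm{disc}\,F_M)$ of the discriminant of its primitive cubic form, a nonnegative integer. If $X_M$ fails to be semistable, the numerical criterion produces a destabilising weighted flag; translating it back into a modification $M\rightsquigarrow M'$ (a lattice flip along that flag) yields a model whose discriminant valuation is strictly smaller. As this invariant is a nonnegative integer it cannot decrease indefinitely, so the procedure terminates at a lattice $M$ whose model $X=X_M$ is semistable; this is the desired semistable del Pezzo fibration. Equivalently, one sets up the weight function on the affine building of $\operatorname{PGL}_4$ and shows it is convex and proper, so that a minimiser exists and is automatically semistable.

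For part (2) I would unwind semistability into geometric statements about $\bar X$ and transfer them to the total space. A reducible or non-reduced cubic surface, and more generally one with a point of multiplicity $\ge 3$ (a cone) or a non-isolated singular locus, is destabilised by an explicit weighted flag; hence semistability forces $\bar X$ to be an integral normal cubic surface, so every fibre of $\pi$ is integral, which is condition (2). The same dictionary with the classical GIT of cubic surfaces shows that the remaining admissible singularities of $\bar X$ are the mild ones (type $A_1$, and the strictly semistable $A_2$), all of which are Du Val. Now $\bP^3_R$ is a regular fourfold and $X\subset\bP^3_R$ is a relative cubic, so $X$ is Gorenstein and the special fibre $\bar X=\{t=0\}$ (with $t$ a uniformiser) is a Cartier divisor; near a singular point $p$ we may write $X=\{F(x,y,z,t)=0\}$ with $F(x,y,z,0)$ defining the Du Val germ $(\bar X,p)$, so by definition $X$ has a compound Du Val singularity at $p$. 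Since Du Val germs are isolated and $\Sing(X)\subseteq\bigcup_s\Sing(\bar X_s)$ (a flat morphism being smooth at a point of a smooth fibre), the locus $\Sing(X)$ is finite, so these cDV points are isolated and hence terminal; thus $X$ has only terminal singularities, which is condition (1).

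Condition (3) then follows from adjunction: with $H=\cO_{\bP^3_R}(1)$ one has $K_{X/R}=(K_{\bP^3_R/R}+X)|_X=(-4H+3H)|_X=-H|_X$, so $-K_{X/R}=H|_X$ is $\pi$-ample, and since $-K_X$ differs from $-K_{X/R}$ by a pullback from the base it is $\pi$-ample as well; as $a=1$ in degree $3$ this is exactly condition (3), completing the verification that the semistable model is standard. I expect the main obstacle to lie in part (1)---proving that the destabilising lattice modification strictly decreases a discrete, bounded-below invariant so that the descent terminates---together with establishing the precise dictionary used throughout part (2) between Koll\'ar's DVR-semistability and the classical GIT classification of singular cubic surfaces, which is what guarantees that the special fibre is Du Val and hence that the total space is terminal.
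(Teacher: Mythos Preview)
This theorem is quoted from \cite{Kol1} rather than proved in the paper, so the nearest internal comparison is the degree~$4$ argument of Sections~3--5. Your plan for part~(1) matches that argument: the $t$-valuation of the discriminant strictly drops under a destabilising weight (the cubic-surface analogue of Lemma~\ref{LemofDisc}), and descent on this nonnegative integer terminates at a semistable model.

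For part~(2) there is a genuine gap. You argue that Koll\'ar semistability of $X/R$ forces the \emph{central fibre} $\bar X$ to be GIT-semistable, hence to carry only $A_1$ or $A_2$ singularities, and then deduce that $X$ is cDV. But the dictionary you invoke points the wrong way: $\bar X$ GIT-semistable easily implies $X$ Koll\'ar-semistable (the monomials present in $\bar F$ already witness $\val_\rho(F)\le\tfrac34\sum w_i$ for every weight), whereas the converse fails. For instance, in characteristic $\neq 3$ the model $F=x_1^3+x_2^3+x_3^3+t\,x_4^3$ has smooth generic fibre and is Koll\'ar-semistable (its discriminant has $t$-valuation $8$, and $X_K$ has no good reduction since the Galois action on the $27$ lines is ramified through $K(t^{1/3})/K$, so $8$ is minimal), yet $\bar X$ is the cone over the Fermat cubic curve, whose vertex is a simple elliptic---hence non-Du~Val and GIT-unstable---point. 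The total space is nonetheless smooth there, as the affine equation $x_1^3+x_2^3+x_3^3+t$ has nonzero linear part. Thus the step ``$\bar X$ Du~Val $\Rightarrow X$ cDV'' is valid, but its hypothesis is neither established nor true in general. The repair, and what the paper does in degree~$4$ (Sections~4--5), is to analyse singularities of the \emph{total space} directly: semistability with $\rho=(1,1,1,0)$ bounds $\mult_p X\le 2$ (if the local equation lay in $(t,x_1,x_2,x_3)^3$ one would get $\val_\rho(F)\ge 3>\tfrac94$), and further explicit destabilising weights, applied to the quadratic and cubic parts of the local equation in the variables $x_1,x_2,x_3,t$, force the singularity to be of type $cA$ or $cD$. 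Your handling of fibre integrality and of the $\pi$-ampleness of $-K_X$ is correct.
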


      Koll\'ar stability, which was introduced in \cite{AFK}, is a generalization of stability for hypersurfaces over regular $1$-dimensional bases. The notion of Koll\'ar stability enables us to obtain a \emph{good} birational model of a given fibration $\pi:X\ra\Spec R$.
      
      Suppose that $R$ is a DVR and $K$ is the fraction field of $R$. Let $\pi:X\ra\Spec R$ be a given fibration and $M$ be a proper parameter space over $R$ such that the generic fibre $X_K$ of $\pi$ is a $K$-valued point of $M$. Since $M$ is proper, the fibration $X\ra\Spec R$ appears as a pullback of the universal family over $M$ via a morphism $f_\pi:\Spec R\ra M$. Assume that a group scheme $G$ over $R$ acts on $M$. Choose a $G$-linearized line bundle $L\in\Pic^G(M)$ and $G$-invariant nonzero section $\fD\in H^0(M, L)^G$. Let $t$ be a uniformizer of $R$.
      
      \begin{defi}\cite[Definition 1.6]{AFK}
      	A fibration $\pi:X\ra\Spec R$ is called a $\fD$-\emph{semistable} model of $X_K$ if ;
      	\begin{enumerate}
      		\item $f_\pi(Spec K)\notin\Supp(\fD)$, and
      		\item The $t$-valuation of the Cartier divisor $f_\pi^*(\fD)$ on $\Spec R$ is minimal along all $R$-valued points $f:\Spec R\ra M$ with $f_\pi(\Spec K)\in G(K)\cdot f(\Spec K)$.
      	\end{enumerate}
      \end{defi}
  
      Typically $M$ is a parameter space of (systems of) polynomials over $R$, $G$ is the general linear group $GL_n$ over $R$, and the action of $G$ on $M$ is given by the coordinate change. If we choose $M=\bP H^0(\bP^3_R, \cO(3))$, $G=GL_4$ and $\fD$ to be the discriminant divisor, then this setting reconstructs the notion of stability for hypersurfaces over curves by Koll\'ar.

      \vspace{3mm}

      It is known that any smooth degree four del Pezzo surface can be written as a complete intersection of two quadrics ($(2, 2)$-complete intersection) in $\bP^4$. In order to construct the standard models of degree $4$ del Pezzo fibrations, we consider Koll\'ar stability on the moduli space $M$ of $(2, 2)$-complete intersections. We have the following result.

      \begin{thm}\label{MainThm2}
      	Suppose that $k$ is an algebraic closed field with $\chara k\neq 2$. Let $C$ be a smooth curve over $k$ and $K$ be the function field of $C$. Suppose $X_K$ be a smooth $(2, 2)$-complete intersection over $K$ with $\dim X_K\geq2$. Then there exists a model $\pi:X\ra C$ of a fibration with the generic fibre $X_K$ which satisfies the following properties:
      	\begin{enumerate}
      		\item $\pi$ has integral fibres.
      		\item $-K_X$ is a $\pi$-ample line bundle.
      		\item $X$ is regular in codimension $2$, and every singular point of $X$ is a hypersurface singularity.
      	\end{enumerate}
      \end{thm}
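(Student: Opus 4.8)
The plan is to deduce Theorem~\ref{MainThm2} from the Kollár-stability framework above by realising $(2,2)$-complete intersections as base loci of pencils of quadrics. By Remark~\ref{ReductionOverDVR} we may assume $C=\Spec R$ with $R$ a discrete valuation ring, $K=\operatorname{Frac}(R)$, and a chosen uniformizer $t$. Put $N:=\dim X_K+2$, so $N\geq 4$. Being a complete intersection of dimension $\geq 1$, $X_K$ is the base locus of a pencil $V_K\subset H^0(\bP^N_K,\cO(2))$ of quadrics, and hence determines a $K$-point $[V_K]$ of $M:=\operatorname{Gr}\big(2,H^0(\bP^N_R,\cO(2))\big)$, a smooth projective $R$-scheme carrying the universal pencil and its base-locus family $\cX\ra M$. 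Let $G:=GL_{N+1}$ act on $M$ through coordinate changes on $\bP^N_R$. For a pencil $V=\langle Q_1,Q_2\rangle$ with Gram matrices $A_1,A_2$, form the binary form $F_V(\lambda,\mu):=\det(\lambda A_1+\mu A_2)$ of degree $N+1$; a change of basis of $V$, or of the coordinates on $\bP^N$, multiplies $F_V$ only by a unit times a power of a determinant, so $\operatorname{disc}(F_V)$ defines a section $\fD$ of a suitable power $L$ of the Plücker bundle that is $G$-invariant for an appropriate $G$-linearization. By the classical theory of pencils of quadrics in characteristic $\neq 2$, the base locus $\operatorname{Bs}(V)$ is a smooth $(2,2)$-complete intersection of dimension $N-2$ precisely when $F_V$ has $N+1$ distinct roots; hence $\Supp(\fD)\subsetneq M$, and $[V_K]\notin\Supp(\fD)$ since $X_K$ is smooth. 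Thus $(M,G,L,\fD)$ is a legitimate input for \cite[Definition~1.6]{AFK}.

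Existence of a $\fD$-semistable model is then formal. Since $M$ is proper over $R$, every point of the orbit $G(K)\cdot[V_K]$ extends to an $R$-point $f$ of $M$, and for such $f$ the pullback $f^*\fD$ is an effective Cartier divisor on $\Spec R$ because the generic point avoids the $G$-invariant subset $\Supp(\fD)$; thus the set of values $v_t(f^*\fD)$ is a nonempty subset of $\bZ_{\geq 0}$ and has a minimum, realised by some $f_\pi$. Let $\pi\colon X\ra\Spec R$ be the associated model. Concretely $X=V(Q_1,Q_2)\subset\bP^N_R$ is the base locus of a pencil $\cV_\pi$ that is a rank-two subbundle of $H^0(\bP^N_R,\cO(2))$, so its reduction $\bar\cV:=\cV_\pi\otimes_R k$ is a genuine pencil of quadrics in $\bP^N_k$; the generic fibre of $\pi$ is the smooth $X_K$, and $v_t\big(\operatorname{disc}F_{\cV_\pi}\big)$ is minimal over all substitutions from $G(K)$ applied to $[V_K]$. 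Everything left to prove concerns $\bar\cV$ and the structure of $X$ along the closed fibre $X_0$.

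The core of the argument --- which I would carry out following the method of \cite{AFK} --- is that $\fD$-semistability prevents $\bar\cV$ from being too degenerate. To any forbidden degeneration of $\bar\cV$ one attaches an explicit cocharacter $\lambda(t)=\diag(t^{a_0},\dots,t^{a_N})\in G(K)$, i.e.\ a rescaling of the homogeneous coordinates by powers of $t$, together with a basis change of the pencil, and one computes --- using the elementary-divisor (Segre symbol) normal form of a pencil of symmetric matrices and the attendant factorisation $F_V=\prod_j(\mu_j\lambda-\lambda_j\mu)^{e_j}$ --- that the resulting $R$-point has strictly smaller $v_t(\operatorname{disc}F)$, contradicting the minimality defining $\pi$. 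Running this for the relevant degeneration patterns should give in turn: (i) $\bar\cV$ has no common factor, so that $X\subset\bP^N_R$ is a flat family of $(2,2)$-complete intersections with $X_0$ of pure dimension $N-2$; in particular $X$ is Gorenstein with $\omega_X\cong\cO_X(3-N)$ by adjunction, so $-K_X$ --- the line bundle $\cO_X(N-3)$ --- is $\pi$-ample because $\cO_X(1)$ is $\pi$-ample and $N-3\geq 1$, which is item (2); (ii) $X_0$ carries no multiple or reducible structure, hence $X$, being flat over a DVR with integral special and generic fibres, is integral, which is item (1); and (iii) at every point $x$ of $X$ some member of $\cV_\pi$ cuts out a hypersurface of $\bP^N_R$ regular at $x$, so that $X$ is locally cut out of a regular scheme by one equation and every singular point of $X$ is a hypersurface singularity, while $\Sing X$ has codimension $\geq 3$ in $X$ --- which is item (3).

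The main obstacle will be item (iii), and within it the bookkeeping of how $v_t(\operatorname{disc}F_V)$ moves under a rescaling cocharacter in terms of the Segre data of $\bar\cV$: this must be organised carefully enough that minimality forces the surviving degeneracies of $F_{\bar\cV}$ to be of node type --- a single double root carried by a corank-one quadric, producing an ordinary double point --- which is what simultaneously yields the codimension bound on $\Sing X$ and the hypersurface-singularity statement. One also has to check that the classical normal form for pencils of quadrics, and the separability of $\operatorname{disc}F_V$ in the pencil parameters, survive in every characteristic $\neq 2$; the hypothesis $\chara k\neq 2$ enters precisely here, through the identification of quadratic forms with symmetric bilinear forms that underlies the Gram-matrix description.
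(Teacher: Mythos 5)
Your framework and existence argument coincide with the paper's: the same Grassmannian of pencils $M=Gr(2,V)$ with the $GL$-action by coordinate changes, the same invariant $D(\det(\lambda A+\mu B))$ playing the role of $\fD$, and the same mechanism for existence (you take a minimizer of the $t$-valuation of $f^*\fD$ over the orbit; the paper runs a Noetherian chain, but by Lemma~\ref{LemofDisc} any minimizer satisfies the weight-system inequality~(\ref{StabConditionOf(2,2)}), so the two notions of semistable model are interchangeable for the purposes of this theorem). Up to that point the proposal is sound.

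The gap is in the verification of properties (1)--(3), which is where essentially all of the paper's work lies and which you only gesture at. For (1) your sketch is close to the paper's (Proposition~\ref{X_0IsIntegral}: a degree count shows a non-integral central fibre contains an $(n-3)$-plane or splits into two quadric components, each case killed by an explicit weight system), and the hypersurface-singularity claim is the easy Lemma~\ref{SemistableModelHasHypersurfSing}. But for regularity in codimension $2$ your proposed organization --- controlling the forbidden degenerations through the Segre symbol and the root structure of the binary form $F_{\bar{\cV}}$ of the central pencil --- does not reach the statement. The obstruction to codimension-$2$ regularity is non-normality of the (already integral) central fibre, and that is not a function of $F_{\bar{\cV}}$ alone: the paper has to import the Lee--Park--Schenzel classification \cite{LPS} of irreducible, non-normal, non-cone $(2,2)$-complete intersections (Theorem~\ref{ClassificationOf(2,2)}, six explicit normal forms), and for each normal form carry out a local Jacobian computation along the one-dimensional singular locus to extract divisibility conditions such as $t^2\mid\mu_{2,2},\mu_{2,j},\mu_{i,j}$ on the lifted coefficients, which only then produce a destabilizing weight system. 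Nothing in your outline supplies this classification or these computations, and they are not routine. Moreover, your expectation that minimality forces the surviving degeneracies to be ``of node type'' producing only ordinary double points is false --- the paper's Section~5 shows semistable threefolds can have $cD$-type points --- although the theorem as stated does not require that stronger conclusion.
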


      If the generic fibre $X_K$ in Theorem \ref{MainThm2} is a del Pezzo surface of degree $4$, we can investigate singularities on a semistable model $X$. Thus we can prove Theorem \ref{MainThm}.

    \subsection{Related works}
    
    There are numerous works studying del Pezzo fibrations. The moduli spaces of degree $4$ del Pezzo fibrations were studied in \cite{HKT}. Kresch and Tschinkel explicitly studied del Pezzo fibrations of degree $6$, $8$, and $9$ in \cite{KT22}, \cite{KT19}, and \cite{KT20} respectively.
    
    The moduli spaces of rational curves on del Pezzo fibrations with terminal singularities were studied in \cite{LT24} and \cite{LT22}, over fields of characteristic zero. Since the investigation of moduli of rational curves is also considered in positive characteristics as in \cite{BLRT}, our results may provide some examples which are useful to study the moduli spaces of rational curves on del Pezzo fibrations with terminal singularities in positive characteristics.

  \begin{ack}
  	This paper is based on the author's master thesis \cite{Kit}.
  	
  	The author is deeply grateful to his advisor, Professor Sho Tanimoto, for helpful discussions and continuous support. The author is also grateful to Hamid Abban and Yuri Tschinkel for their careful reviews and constructive feedback.
  	
  	The author would like to thank Kaito Kimura, Shu Nimura, Yuya Otake, Arashi Sakai, and Naoki Wakasugi for helpful discussions on commutative algebra. The author is thankful to Runxuan Gao for his valuable assistance in refining English language of this manuscript.
  	
  	%%% I thank Shiho Oguihara for always cheering me on.
  	
  	The author was partially supported by JST FOREST program Grant number JPMJFR212Z.
  \end{ack}

  \section{Preliminaries}
  
    \subsection{The moduli space of pencils}
    
    Suppose that $k$ is an algebraically closed field with $\chara(k)\neq 2$. Let $R$ be a local ring of a closed point on a smooth curve over $k$. Clearly, $R$ is a DVR. Let $V$ be the space of (single) quadrics in $n$ variables $x_1,\ldots, x_n$ over $R$. Then $V$ is a free $R$-module of rank $r:=n(n+1)/2$. In order to find good models of $(2, 2)$-complete intersections over $R$, we use the moduli of pencils $M:=Gr(2, V)$ defined over $\Spec R$. In this paper, a pencil means an $R$-point of $M$.
    
    Take an $R$-basis $\{e_1,\ldots, e_r\}$ of $V$. Then we can take an $R$-basis $\{e_i\wedge e_j\}$ $(1\leq i<j\leq r)$ of $\wedge^2 V$. Using this $R$- basis of $\wedge^2 V$ and Pl\"ucker embedding $\iota:M\ra\bP(\wedge^2 V)\cong\bP^{s}_R$ (where $s=r(r-1)/2-1$), we can give coordinates to $R$-points of $M$.
    
    Let $G$ be the group scheme $GL_n$ over $\Spec R$. Then $G$ acts naturally on $M$ by coordinate changes.

    \subsection{The discriminant of pencils}

    Let $\cP\in M(R)$ be a pencil. Take any $R$-basis $f, g\in R[x_1,\ldots, x_n]$ of $\cP$ as a pencil and let $A, B\in M_n(R)$ be symmetric matrices corresponding to $f$ and $g$ respectively. The \emph{discriminant of pencil} $\cP$ is a polynomial in $\lambda$ and $\mu$ defined as follows:
    $$\Delta(\lambda, \mu):=\det(\lambda A+\mu B).$$
    Let $X_K\subset\bP_K^{n-1}$ be a $(2, 2)$-complete intersection defined by $f=g=0$ over $K$. By \cite[Proposition 2.1]{Re1}, $X_K$ is smooth if and only if $\Delta(\lambda, \mu)=\det(\lambda A+\mu B)$ is not identically zero and has $n$ distinct roots. Note that the discriminant $\Delta(\lambda, \mu)$ of $\cP$ depends on the choice of the basis $f, g$.

  \section{Stability on the moduli space of pencils over DVR}

    We use the terminology in \cite{AFK} as follows.
    By the valuative criterion of properness, canonically we may identify $R$-valued points of $M$ with $K$-valued points of $M$ i.e., we have a canonical bijection between the sets $M(R)$ and $M(K)$. We denote by $\cP(\Spec K)$ the $K$-point of $M$ corresponding to an $R$-point $\cP$ of $M$. Using this identification, an $G(K)$-action on $M(R)$ is defined as follows: for $\cP\in M(R)$ and $\rho\in G(K)$, define $\rho\cdot\cP\in M(R)$ to be the $R$-point corresponding to the $K$-point $\rho\cdot\cP(\Spec K)\in M(K)$. The $R$-point $\rho\cdot\cP$ is called a \emph{model of} $\cP$. The set of all models of $\cP$ is the $G(K)$-orbit of $\cP$ in $M(R)$.

  \subsection{Multiplicity and semistability}

    Let $t\in R$ be a uniformaizer of $R$.
    
    \begin{defi}
    	We use the notations defined in section 2.1 and the above. 
    	Let $\cP$ be an $R$-point of $M$ and take a $K$-point $\rho$ of $G$. Then we obtain an $R$-point $\rho\cdot\cP$ and $K$-point $\rho\cdot\cP(\Spec K)$ of $M$. By Pl\"ucker embedding, the $K$-point $\rho\cdot\cP(\Spec K)$ is written as
    	$$[a_0:a_1:\cdots:a_s]\qquad(a_i\in K).$$
    	Now, using the uniformizer $t\in R$, we define \emph{multiplicity} of $\cP\in M(R)$ with respect to $\rho\in G(K)$ as follows:
    	$$\mult_\rho(\cP):=\max\{N\,|\, \text{Every}\hphantom{0} t^{-N}a_i\hphantom{0}\text{is an element of}\hphantom{0}R.\}$$
    	(Note that, by Pl\"ucker embedding, the $R$-point $\rho\cdot\cP$ is written as $[t^{-m}a_0:t^{-m}a_1:\cdots:t^{-m}a_s]$ where $m=\mult_\rho(\cP)$.)
    \end{defi}

    The \emph{weight system} $\rho=(w_1,\ldots, w_n)$ is an element of $\bZ^n$. Each weight system determines a $K$-point $\diag(t^{w_1},\ldots, t^{w_n})$ of $G$.
    By the elementary divisor theorem, instead of considering all actions of $K$-point of $G$, we may deal all $K$-points of $G$ defined by some weight systems and all coordinates. For a weight system $\rho=(w_1,\ldots, m_n)$ and a pencil $\cP\in M(R)$, we write simply $\rho\cdot\cP:=\diag(w_1,\ldots, m_n)\cdot\cP$.
    
    Now we define semistability for $R$-points of $M$ as follows:
    
    \begin{defi}
    	A pencil $\cP\in M(R)$ is \emph{semistable} if the following properties are satisfied:
    	\begin{enumerate}
    		\item The variety $X_K$ over $K$, which is the base locus of $\cP(\Spec K)\in M(K)$, is smooth.
    		\item For every weight system $\rho=(w_1,\ldots, w_n)$, and for every choice of coordinates in $R[x_1,\ldots, x_n]$, the following inequality holds:
    		\begin{align}\label{StabConditionOf(2,2)}
    			\mult_\rho(\cP)\leq\frac{4}{n}\sum_{i=1}^n w_i.
    		\end{align}
    	\end{enumerate}
    \end{defi}

    Let $\bP(V)$ be the projectivization of the space of quadrics $V$ over $R$. Then weight systems naturally act on the set of $K$-points $\bP(V)$. For $f\in\bP(V)(R)$ and a weight system $\rho$, we denote by $\rho\cdot f(\Spec K)$ the corresponding quadric whose coefficients are in $K$. Since $\bP(V)$ is proper, there is an $R$-point $\rho\cdot f$ corresponding to $\rho\cdot f(\Spec K)$.
    
    we define the $t$-valuation $\val_\rho(f)$ of $f$ with repect to a weight system $\rho$ by
    $$\val_\rho(f):=\max\{N\,|\,t^{-N}\cdot(\rho\cdot f(\Spec K))\in R[x_1,\ldots, x_n]\}.$$
    For calculation using semistability condition, we employ the following easy lemma.
    
    \begin{lem}\label{val}
    	Let $\cP\in M(R)$ be a pencil and $\rho$ be a weight system.
    	Take any pair of quadrics $f, g\in R[x_1,\cdots, x_n]$ such that $\{f, g\}$ is an $R$- basis of the pencil $\cP$. Then we have the following inequality:
    	$$\mult_\rho(\cP)\geq\val_\rho(f)+\val_\rho(g).$$
    \end{lem}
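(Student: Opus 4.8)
The plan is to translate the asserted inequality into the language of the Plücker embedding and then apply the ultrametric inequality for the discrete valuation $\ord_t$ of $R$. I would first choose the $R$-basis $\{e_1,\dots,e_r\}$ of $V$ to consist of the degree-two monomials in $x_1,\dots,x_n$, so that every $e_i$ is an eigenvector for the weight system $\rho$: one has $\rho\cdot e_i(\Spec K)=t^{\omega_i}e_i$ with $\omega_i=\val_\rho(e_i)\in\bZ$, and therefore $\rho\cdot(e_i\wedge e_j)(\Spec K)=t^{\omega_i+\omega_j}\,e_i\wedge e_j$ on $\wedge^2 V$, since the $G$-action on $M$ is the one induced, through the Plücker embedding, from the $G$-action on $V$ by functoriality of $\wedge^2$. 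Expanding $f=\sum_i c_ie_i$ and $g=\sum_i d_ie_i$ with $c_i,d_i\in R$, one gets $\val_\rho(f)=\min_i(\omega_i+\ord_t c_i)$ and $\val_\rho(g)=\min_i(\omega_i+\ord_t d_i)$ (with the convention $\ord_t 0=+\infty$), while $f\wedge g=\sum_{i<j}(c_id_j-c_jd_i)\,e_i\wedge e_j$.

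The point deserving care is the normalization of $\mult_\rho(\cP)$, and this is where I would invoke the hypothesis $\cP\in M(R)$. Since $\cP$ is an $R$-point of the Grassmannian $M=Gr(2,V)$, the submodule of $V$ spanned by $f$ and $g$ is a rank-two \emph{direct summand}; hence $\{f,g\}$ extends to an $R$-basis of $V$ and $f\wedge g$ is a primitive vector of $\wedge^2 V$, i.e. the tuple $(c_id_j-c_jd_i)_{i<j}$ generates the unit ideal of $R$. Consequently this tuple is, up to a unit of $R$, the collection of Plücker coordinates of the $R$-point $\cP$ itself; so $\rho\cdot\cP(\Spec K)$ has Plücker coordinates $\bigl(t^{\omega_i+\omega_j}(c_id_j-c_jd_i)\bigr)_{i<j}$, and by the definition of multiplicity $\mult_\rho(\cP)=\min_{i<j}\bigl(\omega_i+\omega_j+\ord_t(c_id_j-c_jd_i)\bigr)$. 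Without this primitivity observation one would control $\mult_\rho(\cP)$ only up to the content that is divided out when $\{f,g\}$ is replaced by a reduced integral representative, and that correction has the wrong sign; so the Grassmannian hypothesis is exactly what makes the inequality true.

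With these formulas in hand the conclusion is immediate. For every $i<j$ the ultrametric inequality gives
\[
\ord_t(c_id_j-c_jd_i)\ \ge\ \min\bigl\{\,\ord_t c_i+\ord_t d_j,\ \ord_t c_j+\ord_t d_i\,\bigr\},
\]
and distributing $\omega_i+\omega_j$ into this minimum and comparing with the formulas for $\val_\rho(f)$ and $\val_\rho(g)$ shows $\omega_i+\omega_j+\ord_t(c_id_j-c_jd_i)\ge\val_\rho(f)+\val_\rho(g)$; taking the minimum over $i<j$ yields $\mult_\rho(\cP)\ge\val_\rho(f)+\val_\rho(g)$. Equivalently and more invariantly: $t^{-\val_\rho(f)}\bigl(\rho\cdot f(\Spec K)\bigr)$ and $t^{-\val_\rho(g)}\bigl(\rho\cdot g(\Spec K)\bigr)$ lie in $V$, so their exterior product $t^{-\val_\rho(f)-\val_\rho(g)}\bigl(\rho\cdot f(\Spec K)\wedge\rho\cdot g(\Spec K)\bigr)$ lies in $\wedge^2 V$; as this product represents the $K$-point $\rho\cdot\cP(\Spec K)$ and, by primitivity of $f\wedge g$, has content exactly $\mult_\rho(\cP)$, the inequality follows at once. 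I do not anticipate any real obstacle: the whole argument is linear algebra over the DVR $R$ and is insensitive to $\chara k$; the only subtlety worth stating explicitly in the write-up is the primitivity of $f\wedge g$, which is precisely the role played by the hypothesis $\cP\in M(R)$.
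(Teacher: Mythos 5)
Your argument is correct and is essentially the same as the paper's: both express the Pl\"ucker coordinates of $\rho\cdot\cP(\Spec K)$ as the $2\times 2$ minors $\lambda_{i,j}\mu_{l,m}-\lambda_{l,m}\mu_{i,j}$ of the coefficient vectors of $\rho\cdot f$ and $\rho\cdot g$ and then apply the ultrametric inequality for $\ord_t$. The only difference is that you make explicit the normalization point (primitivity of $f\wedge g$ because $\cP$ is an $R$-point of the Grassmannian), which the paper uses implicitly; this is a welcome clarification but not a different method.
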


      \begin{proof}
      	Take coordinates $\xi_{i, j}$ on the space of single quadrics $V$ which are coefficients of monomials $x_ix_j$ of universal polynomial of degree two. Then, by Pl\"ucker embedding, we obtain coordinates $\xi_{i,j}\wedge\xi_{l,m}$ on $M$. Set
      	$$\rho\cdot f(\Spec K)=\sum_{i,j\in\{1,\cdots n\}}\lambda_{i, j}x_ix_j,\quad\rho\cdot g(\Spec K)=\sum_{i,j\in\{1,\cdots n\}}\mu_{i, j}x_ix_j.$$
      	Then, by definition, every $t$-valuation of $\lambda_{i,j}$ (resp.$\mu_{i,j}$) is more than or equal to $\val_\rho(f)$ (resp. $\val_\rho(g)$). On the other hand, using coordinates defined in the above, $\rho\cdot\cP(\Spec K)\in M(K)$ is written as
      	$$[\cdots:\lambda_{i,j}\mu_{l,m}-\lambda_{l,m}\mu_{i,j}:\cdots].$$
      	Every $t$-valuation of $\lambda_{i,j}\mu_{l,m}-\lambda_{l,m}\mu_{i,j}$ is more than or equal to $\val_\rho(f)+\val_\rho(g)$. Thus we obtain the iniquality by definition of multiplicity of $\cP$.
      \end{proof}
    
    We say that the weight system $\rho=(w_1,\ldots,w_n)$ is \emph{effective} if $w_i\geq0$ for all $i$. In order to check that whether $\cP\in M(R)$ is semistable, we have to only deal effective weight systems. Indeed, if a weight system $\rho=(w_1,\ldots, w_n)$ destabilizes $\cP\in M(R)$, then an effective weight system $\rho'=(w_1-w',\ldots, w_n-w')$ also destabilizes $\cP$, where $w'=\min_{1\leq i\leq n}\{w_i\}$.

  \subsection{Existence of semistable models}

    We give an elementary proof of the existence of semistable $(2, 2)$-complete intersections with a fixed generic fibre $X_K$. The argument is essentially the same as in \cite[4.6]{Kol1}. In this section, we identify quadrics with symmetric matrices and we use letters such as $A, B$ to denote symmetric matrices.
    
    Let $\Phi$ be any homogeneous polynomial of degree $n$ in two variables $\lambda, \mu$. We denote by $D(\Phi(\lambda, \mu))$ the \emph{discriminant} of $\Phi$. In order to show the existence of semistable models of $(2, 2)$-complete intersections, we use the discriminant of $\Delta=\det(\lambda A+\mu B)$, where $A$ and $B$ is a basis of a pencil. Note that, since the discriminant $\Delta$ of pencils depends on the choice of the basis of pencils, so is $D(\det(\lambda A+\mu B))$.
    
    Recall some facts about the discriminant $D(\Phi(\lambda, \mu))$. Let $\Phi(\lambda, \mu)$ be of degree $n$ and set
    $$\Phi(\lambda, \mu)=\sum_{i=0}^n c_i\lambda^i\mu^{n-i}\quad(c_i\in K).$$
    Then the discriminant $D(\Phi(\lambda, \mu))$ is a homogeneous polynomial of degree $2n-2$ in variables $c_i$. On the other hand, giving weight $i$ to each variable $c_i$, $D$ is a quasihomogeneous polynomial of degree $n(n-1)$ in varibles $c_i$.

    \begin{lem}
    	Suppose $A, B\in M_n(K)$. Then we have
    	$$D(\det(\lambda(\xi A)+\mu(\zeta B)))=\xi^{n(n-1)}\zeta^{n(n-1)}D(\det(\lambda A+\mu B))$$
    	for any $\xi, \zeta\in K$.
    \end{lem}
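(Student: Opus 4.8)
The plan is to track how the coefficients of the binary form $\Phi(\lambda,\mu):=\det(\lambda A+\mu B)$ transform under the substitution $A\mapsto\xi A$, $B\mapsto\zeta B$, and then feed this into the two (quasi-)homogeneity properties of $D$ recalled just before the statement. Write $\Phi(\lambda,\mu)=\sum_{i=0}^n c_i\lambda^i\mu^{n-i}$ with $c_i\in K$. Since $\det(\lambda(\xi A)+\mu(\zeta B))=\det((\xi\lambda)A+(\zeta\mu)B)=\Phi(\xi\lambda,\zeta\mu)$, the resulting form is $\sum_{i=0}^n(\xi^i\zeta^{n-i}c_i)\lambda^i\mu^{n-i}$; in other words, replacing $(A,B)$ by $(\xi A,\zeta B)$ replaces the coefficient $c_i$ by $\xi^i\zeta^{n-i}c_i$.

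I would then factor this replacement as the composite of two one-parameter scalings. First, $A\mapsto\xi A$ (with $B$ fixed) sends $c_i\mapsto\xi^i c_i$; since $D$ is quasi-homogeneous of degree $n(n-1)$ when $c_i$ is assigned weight $i$, this multiplies $D$ by $\xi^{n(n-1)}$. Second, $B\mapsto\zeta B$ sends $c_i\mapsto\zeta^{n-i}c_i$, and for this I would first observe that $D$ is likewise quasi-homogeneous of degree $n(n-1)$ when $c_i$ is assigned weight $n-i$: indeed any monomial $\prod_i c_i^{e_i}$ appearing in $D$ satisfies $\sum_i e_i=2n-2$ (homogeneity of degree $2n-2$ in the $c_i$) and $\sum_i i\,e_i=n(n-1)$ (the first quasi-homogeneity), whence $\sum_i(n-i)e_i=n(2n-2)-n(n-1)=n(n-1)$. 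Consequently the scaling $c_i\mapsto\zeta^{n-i}c_i$ multiplies $D$ by $\zeta^{n(n-1)}$, and composing the two scalings yields
$$D(\det(\lambda(\xi A)+\mu(\zeta B)))=\xi^{n(n-1)}\zeta^{n(n-1)}D(\det(\lambda A+\mu B)).$$

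Alternatively, and without deriving the second quasi-homogeneity, one can write the substitution as $c_i\mapsto\zeta^{n}\cdot(\xi\zeta^{-1})^{i}c_i$ (valid for $\xi,\zeta\in K^\times$), apply quasi-homogeneity with weights $i$ to the factor $(\xi\zeta^{-1})^{i}$ and ordinary homogeneity of degree $2n-2$ to the overall factor $\zeta^{n}$, obtaining $(\xi\zeta^{-1})^{n(n-1)}\cdot\zeta^{n(2n-2)}=\xi^{n(n-1)}\zeta^{n(n-1)}$; the remaining case $\xi\zeta=0$ then follows because both sides of the asserted identity are polynomial in $\xi,\zeta$ (and in the entries of $A$ and $B$), so it suffices to verify it on the dense subset where $\xi,\zeta\neq0$. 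I do not expect a genuine obstacle here: the only point needing care is reading off correctly that $(\xi A,\zeta B)$ scales $c_i$ by $\xi^i\zeta^{n-i}$ and then matching this pattern against the two weightings under which $D$ is quasi-homogeneous; the rest is bookkeeping.
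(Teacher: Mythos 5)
Your argument is correct and follows essentially the same route as the paper: both rest on reading off how the coefficients $c_i$ of $\det(\lambda A+\mu B)$ scale under $(A,B)\mapsto(\xi A,\zeta B)$ and then invoking the homogeneity and quasi-homogeneity of $D$. The only difference is cosmetic: the paper gets the scaling $c_m\mapsto\xi^m\zeta^{n-m}c_m$ by counting entries of $A$ and $B$ in each term of $c_m$ and dismisses the $\zeta$-factor with ``a similar calculation,'' whereas you obtain it directly from $\Phi(\xi\lambda,\zeta\mu)$ and explicitly verify the dual quasi-homogeneity $\sum_i(n-i)e_i=n(n-1)$ --- a detail worth having spelled out.
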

    
      \begin{proof}
      	Let $A=(a_{ij}), B=(b_{ij})$ and set
      	$$\det(\lambda A+\mu B)=\sum_{i=0}^n c_i\lambda^i\mu^{n-i}\quad(c_i\in K).$$
      	Then we can write $c_i$ as a polynomial in variables $a_{ij}, b_{ij}$ $(i,j=1,\ldots, n)$. In each term of $c_m$, the sum of the number of variables $a_{ij}$ (resp. $b_{ij}$) $(i,j=1,\ldots, n)$ is $m$ (resp. $n-m$). In particular, the sum of the number of variables $a_{ij}$ coincides to the weight which we gave above. Thus, if each entry $a_{ij}$ of $A$ is multiplied by $\xi$, then $c_m$ is multiplied by $\xi^m$. Since $D$ is quasihomogeneous of degree $n(n-1)$, we have
      	$$D(\det(\lambda(\xi A)+\mu(B)))=\xi^{n(n-1)}D(\det(\lambda A+\mu B)).$$
      	By a similar calculation, we get the equality.
      \end{proof}

    \begin{lem}\label{DiscIsSemiinv}
        Suppose $A, B\in M_n(K)$ and $P=(p_{ij})\in GL_2(R)$. Then
        $$D(\det(\lambda A+\mu B))=uD(\det(\lambda(p_{11}A+p_{12} B)+\mu(p_{21}A+p_{22}B))$$
        for some $u\in R^\times$.
        In particular, if $\cP\in M(R)$ is a pencil, the $t$-valuation of $D(\Delta)=D((\det(\lambda A+\mu B))$ does not depends on the choice of the basis $A, B$ of the pencil $\cP$.
    \end{lem}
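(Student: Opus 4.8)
The plan is to convert the change of basis into an invertible linear substitution on the binary form $\Delta(\lambda,\mu)=\det(\lambda A+\mu B)$ and then invoke the classical covariance of the discriminant of a binary form. Setting $A'=p_{11}A+p_{12}B$ and $B'=p_{21}A+p_{22}B$, one has
$$\lambda A'+\mu B'=(\lambda p_{11}+\mu p_{21})A+(\lambda p_{12}+\mu p_{22})B,$$
so
$$\det(\lambda A'+\mu B')=\Delta\bigl(\lambda p_{11}+\mu p_{21},\ \lambda p_{12}+\mu p_{22}\bigr),$$
i.e.\ $\det(\lambda A'+\mu B')$ is obtained from $\Delta$ by an invertible linear change of the variables $\lambda,\mu$ whose matrix has determinant $\det P$. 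I would then use that the discriminant of a binary form of degree $n$ is a relative invariant of weight $n(n-1)$: for any homogeneous $\Phi$ of degree $n$ in $\lambda,\mu$ and any linear substitution with invertible matrix $g$, $D(\Phi\circ g)=(\det g)^{n(n-1)}D(\Phi)$. Applied to $\Phi=\Delta$ and the substitution above, this gives $D(\det(\lambda A'+\mu B'))=(\det P)^{n(n-1)}D(\Delta)$, hence $D(\Delta)=u\,D(\det(\lambda A'+\mu B'))$ with $u=(\det P)^{-n(n-1)}$; since $P\in GL_2(R)$ forces $\det P\in R^{\times}$, we get $u\in R^{\times}$, as required.

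To keep the argument self-contained I would prove the weight formula from the root description of the discriminant. The asserted equality is a polynomial identity in the entries of $A$, $B$ and of $g$, so it is enough to establish it on the Zariski-dense locus where the leading coefficient $c_n=\det A$ of $\Delta$ is non-zero; the general case then follows by specialization. On that locus, factor $\Delta(\lambda,\mu)=\det A\cdot\prod_{i=1}^n(\lambda-\alpha_i\mu)$ over $\overline{K}$, so that $D(\Delta)=(\det A)^{2n-2}\prod_{i<j}(\alpha_i-\alpha_j)^2$. Substituting and factoring out, $\det(\lambda A'+\mu B')$ acquires leading coefficient $\det A\cdot\prod_i(p_{11}-\alpha_i p_{12})$ and roots $\alpha_i'$, and a short computation (a M\"obius transformation) yields
$$\alpha_i'-\alpha_j'=\frac{(\det P)\,(\alpha_i-\alpha_j)}{(p_{11}-\alpha_i p_{12})(p_{11}-\alpha_j p_{12})}.$$
Feeding these into the root formula, every factor $p_{11}-\alpha_k p_{12}$ enters with exponent $2n-2$ from the leading coefficient and with the same total exponent $2n-2$ in the denominator of $\prod_{i<j}(\alpha_i'-\alpha_j')^2$, so all of them cancel, and what remains is $(\det P)^{n(n-1)}(\det A)^{2n-2}\prod_{i<j}(\alpha_i-\alpha_j)^2=(\det P)^{n(n-1)}D(\Delta)$. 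As an alternative one may reduce $P$ to generators: $R$ being local, $GL_2(R)$ is generated by diagonal and elementary matrices; the diagonal case is exactly the preceding lemma, and the elementary case amounts to invariance of $D$ under a shear $\lambda\mapsto\lambda+r\mu$, which follows from writing $D(\Phi)$ as a constant times $\operatorname{Res}(\partial_\lambda\Phi,\partial_\mu\Phi)$ and using $\operatorname{Res}(F,G+rF)=\operatorname{Res}(F,G)$ for binary forms.

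Finally, for the ``in particular'' clause: if $f,g$ and $f',g'$ are both $R$-bases of a fixed pencil $\cP\in M(R)$, they span the same free rank-two $R$-submodule of $V$, hence are related by a matrix in $GL_2(R)$; by the first part the two values of $D(\Delta)$ differ by a unit of $R$ and so have equal $t$-valuation. The one delicate point is the degenerate case $\det A=0$, where $\Delta$ has a root ``at infinity'' and the clean factorization over $\overline{K}$ is not available; this is precisely why I would argue generically in the entries and then specialize, rather than first trying to normalize the basis so that $\det A\ne0$.
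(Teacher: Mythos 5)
Your argument is correct and is essentially the paper's own proof: rewrite $\lambda A'+\mu B'$ as the linear substitution $(\lambda,\mu)\mapsto(p_{11}\lambda+p_{21}\mu,\,p_{12}\lambda+p_{22}\mu)$ applied to $\Delta$, invoke semiinvariance of the discriminant of a binary form of weight $n(n-1)$, and note $\det P\in R^\times$. The only difference is that you supply a proof of the semiinvariance itself (via roots, or via generators of $GL_2(R)$), which the paper simply cites as a known fact.
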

    
      \begin{proof}
    	  This follows from the fact that the discriminant is semiinvariant under the action of $GL_2(K)$ and
    	  \begin{align*}
    	  	& D(\det(\lambda(p_{11}A+p_{12} B)+\mu(p_{21}A+p_{22}B)) \\
    	  	= & D(\det((p_{11}\lambda+p_{21}\mu)A+(p_{12}\lambda+p_{22}\mu)B)).
    	  \end{align*}
      \end{proof}

    Let $A\in M_n(K)$ be a symmetric matrix. We define $\val_t(A)$ to be the minimum of the valuations of all the entries of $A$. This coincides with the minimum of valuations of all the coefficients of the quadric corresponding to the symmetric matrix $A$.

    \begin{lem}\label{LemofDisc}
    	Let $\cP\in M(R)$ be a pencil and $\rho=(w_1,\ldots,w_n)$ be an effective weight system. Take any $R$-basis $\{A, B\}$ (resp. $\{A', B'\}$) of the pencil $\cP$ (resp. $\rho\cdot\cP$). Then, up to a unit in $R$, the following equality holds:
    	$$D(\det(\lambda A'+\mu B'))=t^{n(n-1)(-\mult_\rho(\cP)+(4/n)\sum_{i=1}^n w_n)}D(\det(\lambda A+\mu B)).$$
    \end{lem}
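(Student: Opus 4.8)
The strategy is to track how the two natural "invariants" attached to a pencil — the multiplicity $\mult_\rho(\cP)$ coming from the Plücker coordinates, and the $t$-valuation of the discriminant $D(\det(\lambda A+\mu B))$ — transform under the $K$-point $\diag(t^{w_1},\dots,t^{w_n})$ of $G$, and to compare them. I would work with a convenient basis: by Lemma \ref{DiscIsSemiinv}, the $t$-valuation of $D(\det(\lambda A'+\mu B'))$ is independent of which $R$-basis of $\rho\cdot\cP$ we pick, so it suffices to prove the identity for one good choice of $\{A',B'\}$, and similarly the left-hand side for $\{A,B\}$ is basis-independent. Concretely I would take an $R$-basis $\{A,B\}$ of $\cP$, let $f,g$ be the corresponding quadrics, and consider the quadrics $\rho\cdot f(\Spec K)$, $\rho\cdot g(\Spec K)$ obtained by the diagonal substitution $x_i\mapsto t^{w_i}x_i$; after dividing by $t^{\val_\rho(f)}$ and $t^{\val_\rho(g)}$ respectively one gets an $R$-basis $\{A',B'\}$ of $\rho\cdot\cP$ (one should note $\mult_\rho(\cP)=\val_\rho(f)+\val_\rho(g)$ can be arranged by choosing $f,g$ suitably, but even without that the computation goes through with the bookkeeping below).

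The computational heart is the behaviour of $\det(\lambda A + \mu B)$ under the diagonal substitution. Substituting $x_i\mapsto t^{w_i}x_i$ in the quadratic form $\lambda f + \mu g$ replaces the symmetric matrix $\lambda A+\mu B$ by $D_w(\lambda A+\mu B)D_w$ where $D_w=\diag(t^{w_1},\dots,t^{w_n})$; hence $\det$ gets multiplied by $t^{2\sum w_i}$. So if $\{\tilde A,\tilde B\}$ denotes the (possibly non-integral) pencil basis $\{\rho\cdot f(\Spec K),\rho\cdot g(\Spec K)\}$ then $\det(\lambda\tilde A+\mu\tilde B)=t^{2\sum w_i}\det(\lambda A+\mu B)$, and therefore $D(\det(\lambda\tilde A+\mu\tilde B)) = D(t^{2\sum w_i}\det(\lambda A+\mu B))$. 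Now the homogeneity facts recalled just before the lemma kick in: $D$ is homogeneous of degree $2n-2$ in the coefficients $c_i$, so scaling all $c_i$ by $t^{2\sum w_i}$ scales $D$ by $t^{(2n-2)\cdot 2\sum w_i} = t^{(4/n)\cdot n(n-1)\sum w_i}$. Passing from $\{\tilde A,\tilde B\}$ to the integral basis $\{A',B'\}=\{t^{-\val_\rho(f)}\tilde A,\ t^{-\val_\rho(g)}\tilde B\}$ and applying the previous lemma (scaling $A$ by $\xi$, $B$ by $\zeta$ multiplies $D$ by $\xi^{n(n-1)}\zeta^{n(n-1)}$) divides $D$ by $t^{n(n-1)(\val_\rho(f)+\val_\rho(g))}$. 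Combining, and using that $\mult_\rho(\cP)=\val_\rho(f)+\val_\rho(g)$ for an appropriate basis (or absorbing the discrepancy into a unit via Lemma \ref{DiscIsSemiinv}, since the final exponent must be basis-independent), gives
$$D(\det(\lambda A'+\mu B')) = t^{\,n(n-1)\left(-\mult_\rho(\cP) + (4/n)\sum_{i=1}^n w_i\right)} D(\det(\lambda A+\mu B))$$
up to a unit in $R$.

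**Main obstacle.** The routine parts are the $\det$-scaling identity and the invocation of the two homogeneity properties of $D$. The delicate point — and the step I would write most carefully — is the normalization issue: matching the factor $t^{n(n-1)(\val_\rho(f)+\val_\rho(g))}$ from passing to an integral basis with the factor $t^{n(n-1)\mult_\rho(\cP)}$ in the statement. These agree only when $\val_\rho(f)+\val_\rho(g)=\mult_\rho(\cP)$, i.e. when the chosen basis $\{f,g\}$ computes the multiplicity; Lemma \ref{val} only gives the inequality $\mult_\rho(\cP)\ge\val_\rho(f)+\val_\rho(g)$ for an arbitrary basis. I would resolve this by first arguing that a basis achieving equality exists (a standard elementary-divisor / Plücker-coordinate argument: choose $f,g$ so that the leading Plücker coordinate of $\rho\cdot\cP$ of minimal $t$-valuation is realized as $\lambda_{i,j}\mu_{l,m}-\lambda_{l,m}\mu_{i,j}$ with $\val_t(\lambda_{i,j})=\val_\rho(f)$, $\val_t(\mu_{l,m})=\val_\rho(g)$), prove the identity for that basis, and then note that by Lemmas \ref{val} and \ref{DiscIsSemiinv} both sides are independent of the basis up to a unit, so the identity holds for every $R$-basis.
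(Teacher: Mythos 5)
Your proof is correct and follows the same two-step skeleton as the paper's: first compute the effect of the substitution $x_i\mapsto t^{w_i}x_i$ on $D(\det(\lambda A+\mu B))$ via the homogeneity of the discriminant (yielding the factor $t^{4(n-1)\sum w_i}$), then account for the renormalization needed to pass from the transformed pair to an actual $R$-basis of $\rho\cdot\cP$ (yielding $t^{n(n-1)\mult_\rho(\cP)}$). The one genuinely different ingredient is how you handle the second step: the paper runs an iterative saturation procedure (repeatedly finding a combination $\widetilde{A}'$ of positive valuation, dividing it out, and tracking the drop in the Pl\"ucker valuation until an $R$-basis is reached), whereas you choose, once and for all, a basis $\{f,g\}$ of $\cP$ adapted to the elementary divisors of the lattice generated by $\rho\cdot f,\rho\cdot g$ inside its saturation, so that $\val_\rho(f)+\val_\rho(g)=\mult_\rho(\cP)$ and a single rescaling lands on an $R$-basis; basis-independence via Lemma \ref{DiscIsSemiinv} then finishes the general case. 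Your route is arguably cleaner, and also sidesteps the fractional power $(\det F)^{2/n}$ appearing in the paper's computation by using plain homogeneity of degree $2n-2$ directly. One caution: your first-paragraph claim that for an \emph{arbitrary} basis, dividing $\rho\cdot f$ and $\rho\cdot g$ by $t^{\val_\rho(f)}$ and $t^{\val_\rho(g)}$ already produces an $R$-basis of $\rho\cdot\cP$ is false (e.g.\ $\rho\cdot f=x_1^2$, $\rho\cdot g=x_1^2+tx_2^2$ are both primitive but do not generate a saturated lattice), and correspondingly the parenthetical ``even without that the computation goes through'' is misleading --- for a non-adapted basis the exponent one obtains is $\val_\rho(f)+\val_\rho(g)$, which may be strictly smaller than $\mult_\rho(\cP)$. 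Your final paragraph correctly identifies and repairs exactly this point, so the argument as a whole is sound; in a written version the adapted-basis existence should be stated as the elementary-divisor argument rather than the slightly vague Pl\"ucker-coordinate phrasing you give.
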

    
      \begin{proof}
      	Firstly we show that
      	\begin{align}\label{1ofLem}
      		D(\det(\lambda A+\mu B))=t^{-4(n-1)\sum_{i=1}^n w_n}D(\det(\lambda F^TAF+\mu F^TBF))
      	\end{align}
      	where $F=E'\diag(t^{w_1},\ldots, t^{w_n})E$ and $E, E'\in GL_n(R)$. ($E$ and $E'$ are any coordinate changes.) Note that $\det F$ is equal to $t^{\sum_{i=1}^n w_n}$ up to a unit of $R$. Using Lemma \ref{DiscIsSemiinv}, the above equality follows from the following easy calculation:
      	\begin{align*}
      		& D(\det(\lambda F^TAF+\mu F^TBF)) \\
      		= & D(\det(F^T(\lambda A+\mu B)F)) \\
      		= & D((\det F)^2\det(\lambda A+\mu B)) \\
      		= & D(\det((\det F)^{2/n}(\lambda A+\mu B))) \\
      		= & (\det F)^{(2/n)\cdot n\cdot(2n-2)}D(\det(\lambda A+\mu B)) \\
      		= & (t^{\sum_{i=1}^n w_n})^{4\cdot(n-1)}D(\det(\lambda A+\mu B)) \\
      		= & t^{4\cdot(n-1)\sum_{i=1}^n w_n}D(\det(\lambda A+\mu B)).
      	\end{align*}
        
        Next, we show that
        \begin{align}\label{2ofLem}
        	D(\det(\lambda F^TAF+\mu F^TBF))=t^{n(n-1)\mult_\rho(\cP)}D(\det(\lambda A'+\mu B'))
        \end{align}
        up to a unit in $R$.
        Note that $\{F^TAF, F^TBF\}$ is a basis of the pencil $\rho\cdot\cP(\Spec K)$.
        
        \noindent
        Set $\widetilde{A}=F^TAF$ and $\widetilde{B}=F^TBF$. Let $N\subset K[x_1,\ldots,x_n]$ be an $R$-module generated by $\widetilde{A}$ and $\widetilde{B}$. If $N$ is equal to the ideal in $R[x_1,\ldots,x_n]$ generated by some basis of pencil $\rho\cdot\cP$ over $R$, the Pl\"ucker coordinates $\widetilde{A}\wedge\widetilde{B}$ and $A\wedge B$ are the same (up to a unit of $R$), thus there is nothing to prove. Suppose not. Then there exists a linear combination (with coefficients in units of $R$) $\widetilde{A}'$ of $\widetilde{A}$ and $\widetilde{B}$ such that $\val_t(\widetilde{A}')\neq0$. Since the weight system $\rho$ is effective, we have $\val_t(\widetilde{A}')>0$. By Lemma \ref{DiscIsSemiinv}, $t$-valuations of $D(\det(\lambda\widetilde{A}+\mu\widetilde{B}))$ and $D(\det(\lambda\widetilde{A}'+\mu\widetilde{B}))$ are the same. Thus, up to a unit in $R$,
        \begin{align*}
        	& D(\det(\lambda\widetilde{A}+\mu\widetilde{B}))=D(\det(\lambda\widetilde{A}'+\mu\widetilde{B})) \\
        	= & t^{n(n-1)\val_t(\widetilde{A}')}D(\det(\lambda(t^{-\val_t(\widetilde{A}')}\widetilde{A}')+\mu\widetilde{B})).
        \end{align*}
        Then $\widetilde{A}'':=t^{-\val_t(\widetilde{A}')}\widetilde{A}'$ satisfies $\val_t\widetilde{A}''=0$. Replacing $\widetilde{A}'$ by $\widetilde{A}''$, we repeat this procedure. The $t$-valuation of the Pl\"ucker coordinte of $\widetilde{A}'\wedge\widetilde{B}'$ decrease in each process. Since the $t$-valuation of the Pl\"ucker coordinate $\widetilde{A}\wedge\widetilde{B}$ is finite, this procedure must stop. Then we get a $R$-basis $\{A'', B''\}$ of the pencil $\rho\cdot\cP$ and
        $$D(\det(\lambda F^TAF+\mu F^TBF))=t^{n(n-1)\mult_\rho(\cP)}D(\det(\lambda A''+\mu B'')).$$
        By Lemma \ref{DiscIsSemiinv}, we may replace $A''$ and $B''$ by any $R$-basis $A'$ and $B'$ of the pencil $\rho\cdot\cP$.
        
        Combining (\ref{1ofLem}) and (\ref{2ofLem}), we get the equality.
      \end{proof}

    \begin{prop}
    	For any smooth $(2, 2)$-complete intersections $X_K$ over $K$, there exists a semistable model $X$ over $R$ whose generic fibre is $X_K$.
    \end{prop}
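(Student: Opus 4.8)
The plan is to follow the strategy of \cite[4.6]{Kol1}: attach to each model of $X_K$ a nonnegative integer, namely the $t$-valuation of the discriminant of a representing pencil, and show that a model minimizing this integer is automatically semistable.

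First I would record that models of $X_K$ exist. The smooth $(2,2)$-complete intersection $X_K$ is the base locus of a $K$-point of $M=Gr(2,V)$, which by the valuative criterion of properness (i.e.\ the identification $M(R)=M(K)$ recalled above) extends to an $R$-point $\cP\in M(R)$. For an arbitrary $\cQ\in M(R)$, pick an $R$-basis $\{A,B\}$ of the associated pencil with $A,B\in M_n(R)$ symmetric, and set $d(\cQ):=\val_t D(\det(\lambda A+\mu B))$. Since $\det(\lambda A+\mu B)\in R[\lambda,\mu]$ and the discriminant has integral coefficients, $D(\Delta)\in R$, so $d(\cQ)\geq 0$; by Lemma \ref{DiscIsSemiinv} this value is independent of the chosen basis. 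If moreover the base locus of $\cQ(\Spec K)$ is smooth, then $\Delta$ has $n$ distinct roots by \cite[Proposition 2.1]{Re1}, so $D(\Delta)\neq 0$ in $K$ and $d(\cQ)<\infty$.

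Now consider the set of all models of $\cP$, i.e.\ the $G(K)$-orbit of $\cP$ in $M(R)$. Every model $\cQ=\rho\cdot\cP$ with $\rho\in G(K)$ has $\cQ(\Spec K)$ with base locus projectively equivalent to $X_K$, hence smooth, so $d(\cQ)\in\bZ_{\geq 0}$. Since $\bZ_{\geq 0}$ is well-ordered, $d$ attains a minimum on this orbit, say at $\cP_0$. I claim $\cP_0$ is semistable. Condition (1) holds because the base locus of $\cP_0(\Spec K)$ is projectively equivalent to the smooth surface $X_K$. For condition (2), suppose not: then in some system of coordinates some weight system destabilizes $\cP_0$, and (replacing it by an effective weight system as noted before this subsection, and keeping the coordinate change, which is handled in the proof of Lemma \ref{LemofDisc}) we obtain an effective weight system $\rho=(w_1,\dots,w_n)$ with $\mult_\rho(\cP_0)>\frac{4}{n}\sum_{i=1}^n w_i$. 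Applying Lemma \ref{LemofDisc} to $\cP_0$ and $\rho\cdot\cP_0$ gives
$$d(\rho\cdot\cP_0)=d(\cP_0)+n(n-1)\Bigl(-\mult_\rho(\cP_0)+\tfrac{4}{n}\sum_{i=1}^n w_i\Bigr)<d(\cP_0).$$
As $\rho\cdot\cP_0$ is again a model of $\cP$, this contradicts minimality. Hence $\cP_0$ is semistable, and the $(2,2)$-complete intersection $X\subset\bP^{n-1}_R$ cut out by an $R$-basis of $\cP_0$ is a semistable model whose generic fibre is isomorphic to $X_K$.

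The argument is short because the computational substance sits entirely in Lemmas \ref{DiscIsSemiinv} and \ref{LemofDisc}; what must be verified here is only that the height $d$ is finite (smoothness of $X_K$ forces $\Delta$ to have distinct roots, so $D(\Delta)\neq 0$), bounded below (an $R$-point admits a basis with entries in $R$), and strictly decreased by every destabilizing weight system (Lemma \ref{LemofDisc}). I do not expect a serious obstacle at this step; the genuine difficulty in the paper lies downstream, in extracting from such a semistable model enough control on its singularities to deduce Theorem \ref{MainThm}.
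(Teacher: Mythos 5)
Your proposal is correct and follows essentially the same route as the paper: both use Lemma \ref{LemofDisc} to show that a destabilizing weight system strictly decreases the nonnegative integer $\val_t D(\det(\lambda A+\mu B))$, the only cosmetic difference being that you select a minimizer over the $G(K)$-orbit by well-ordering of $\bZ_{\geq 0}$ while the paper iterates the destabilization and invokes the ascending chain condition on the principal ideals $(D(\Delta))\subset R$. Your explicit remarks on why $d$ is nonnegative and finite (integrality of the discriminant, distinct roots of $\Delta$ for smooth $X_K$) are left implicit in the paper but are accurate.
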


      \begin{proof}
      	Fix a smooth $(2, 2)$-complete intersection $X_K$ over $K$, and take any model $\cP$ of $(2, 2)$-complete intersection whose generic fibre is $X_K$. If $\cP$ is not semistable, in some coordinate there exists an effective weight system $\rho=(w_1,\ldots, w_n)$ which destabilizes $\cP$. Then by (\ref{StabConditionOf(2,2)}), we have
      	\begin{align}\label{Unstable}
      		\displaystyle\mult_\rho(\cP)>\frac{4}{n}\sum_{i=1}^nw_i.
      	\end{align}
      	Take any $R$-basis $\{A, B\}$ (resp. $\{A', B'\}$) of the pencil $\cP$ (resp. $\rho\cdot\cP$). By Lemma \ref{LemofDisc}, we have
      	$$D(\det(\lambda A'+\mu B'))=t^{n(n-1)(-\mult_\rho(\cP)+(4/n)\sum_{i=1}^n w_n)}D(\det(\lambda A+\mu B)).$$
        By (\ref{Unstable}), $D(\det(\lambda A'+\mu B'))$ is divisible by $tD(\det(\lambda A+\mu B))$ as an element of $R$. If $\rho\cdot\cP$ is not semistable, replace $\cP$ by $\rho\cdot\cP$ and repeat. Then we get an ascending chain of ideals of $R$ such that $(D(\det(\lambda A+\mu B)))\subsetneq(D(\det(\lambda A'+\mu B')))\subseteq\cdots$. Since $R$ is a noetherian ring, this chain must stop and then there is no $K$-valued point of $G$ which destabilizes the output of the above procedure.
      \end{proof}

  \section{Properties of semistable $(2, 2)$-complete intersections}

    Let $\cP\in M(R)$ be a pencil whose corresponding ideal $I\subset R[x_1,\ldots, x_n]$ is generated by distinct two homogeneous degree $2$ polynomials $f, g\in R[x_1,\ldots, x_n]$. Recall that $\cP$ is semistable if and only if:
    \begin{enumerate}
    	\item The generic fibre $X_K$ is a smooth $(2, 2)$-complete intersection over $K$, and
    	\item for every weight system $\rho=(w_1,\ldots, w_n)$, we have that
    	$$\mult_\rho(\cP)\leq\frac{4}{n}\sum_{i=1}^n w_i.$$
    \end{enumerate}

    Set $X:=\{f=g=0\}\subset\bP^{n-1}_R$ and let $\pi:X\ra\Spec R$ be the structure morphism. In this paper, we call $X$ a \emph{semistable model of a $(2, 2)$-complete intersection} if $X$ is defined by an ideal $I$ corresponding to a semistable pencil $\cP\in M(R)$. The \emph{central fibre} $X_0$ is the fibre of $\pi$ at the unique closed point of $\Spec R$.

    \subsection{General properties}

    Since the generic fibre of $\pi$ is integral and $\pi$ is surjective, $\pi$ is a flat morphism. In particular, the central fibre $X_0$ is equidimensional.
    
    \begin{lem}\label{IdealHasNoQuadricWithRank1or2}
    	Let $\cP\in M(R)$ be semistable. Then the ideal $\overline{I}\subset k[x_1,\ldots, x_n]$, which is the image of $I$ under the surjection $R[x_1,\ldots, x_n]\ra k[x_1,\ldots, x_n]$, does not contain rank $1$ or $2$ quadratic forms.
    \end{lem}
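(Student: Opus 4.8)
The plan is to argue by contradiction, testing the semistability of $\cP$ against one very simple weight system. First I would restate the conclusion in terms of the reduction of the pencil. Writing $I=(f,g)$ with $\{f,g\}$ an $R$-basis of the sub-bundle $\cP\subset V$, the degree-$2$ part of $\bar I\subset k[x_1,\ldots,x_n]$ equals $k\bar f+k\bar g$; since $\cP$ is a sub-bundle of $V$, this is exactly the $2$-dimensional reduction $\bar\cP:=\cP\otimes_R k\subset V\otimes_R k=k[x_1,\ldots,x_n]_2$. As quadratic forms are homogeneous of degree $2$, it therefore suffices to prove that $\bar\cP$ contains no quadratic form of rank $1$ or $2$.

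Assume then that $\bar q\in\bar\cP$ has rank $1$ or $2$. Since $k$ is algebraically closed with $\chara k\neq 2$, $\bar q$ is a product of two linear forms, so after a coordinate change over $k$ we may assume $\bar q=x_1^2$ or $\bar q=x_1x_2$; in either case every monomial occurring in $\bar q$ involves $x_1$. Such a coordinate change lifts to one over $R$ because $GL_n(R)\to GL_n(k)$ is surjective, and it does not affect whether $\cP$ is semistable, since semistability is imposed over all coordinate systems at once. Because $\bar q\neq 0$, any lift $f\in\cP$ of $\bar q$ is primitive and so extends to an $R$-basis $\{f,g\}$ of $\cP$; by construction, the coefficient in $f$ of every monomial $x_ix_j$ with $i,j\ge 2$ reduces to $0$ modulo $t$, hence lies in $tR$.

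Next I would apply the effective weight system $\rho=(1,0,\ldots,0)$. Forming $\rho\cdot f$ amounts to substituting $x_1\mapsto tx_1$: a monomial involving $x_1$ acquires a factor of $t$, whereas a monomial $x_ix_j$ with $i,j\ge 2$ already has its coefficient in $tR$ by the previous step. Hence every coefficient of $\rho\cdot f$ lies in $tR$, i.e.\ $\val_\rho(f)\ge 1$, while $\val_\rho(g)\ge 0$ is automatic. By Lemma \ref{val}, $\mult_\rho(\cP)\ge\val_\rho(f)+\val_\rho(g)\ge 1$. On the other hand $\tfrac4n\sum_{i=1}^n w_i=\tfrac4n<1$, since $n\ge 5$ in the situation at hand (the generic fibre $X_K$ is a $(2,2)$-complete intersection of dimension $n-3\ge 2$). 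This contradicts condition (2) in the definition of semistability, which finishes the proof.

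The routine inputs are the identification of the degree-$2$ part of $\bar I$ with $\bar\cP$ and the extension of a primitive element to an $R$-basis of a free module of rank $2$. The only spot that needs a bit of care is the bookkeeping: one must check that normalizing $\bar q$ by a coordinate change really is harmless for semistability, and that $\val_\rho$ and $\mult_\rho$ are computed in accordance with the conventions fixed in Section 3. I do not expect any serious obstacle here---the content of the argument is simply that a quadric of rank $\le 2$ in the central fibre forces the pencil to degenerate along the hyperplane $\{x_1=0\}$, and the weight system $(1,0,\ldots,0)$ picks this up immediately.
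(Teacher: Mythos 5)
Your argument is correct and is essentially the paper's proof: normalize the low-rank form to $x_1^2$ or $x_1x_2$ by a coordinate change and show the effective weight system $\rho=(1,0,\ldots,0)$ destabilizes $\cP$ via Lemma \ref{val}, using $\mult_\rho(\cP)\ge 1>\tfrac{4}{n}$. The paper states this in one line; you have merely supplied the routine details (lifting the coordinate change, extending a primitive lift to an $R$-basis, and the observation that $n\ge 5$ in the relevant setting), all of which check out.
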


      \begin{proof}
      	Assume that $\overline{I}$ contains a rank $1$ (resp. $2$) quadratic form. In some coordinates, it is written as $x_1^2$ (resp. $x_1x_2$). In either case, $\cP$ is destabilized by the weight system $\rho=(1, 0,\ldots, 0)$ by Lemma \ref{val}.
      \end{proof}
    
    \begin{prop}\label{X_0IsIntegral}
    	Let $\cP$ be semistable. Then the central fibre $X_0$ is integral.
    \end{prop}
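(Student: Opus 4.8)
The plan is to show that every irreducible component of the central fibre $X_0$ is non-degenerate in $\bP^{n-1}_k$, and then to combine this with $\deg X_0=4$. Throughout I use the standing hypothesis $n\ge 5$ (equivalently $\dim X_K\ge 2$). First I would record the elementary facts: since $\pi$ is flat, $X_0=\{\bar f=\bar g=0\}\subset\bP^{n-1}_k$ is equidimensional of dimension $n-3$, hence a complete intersection of two quadrics; in particular $X_0$ is Cohen--Macaulay and $\deg X_0=4$. Let $Z_1,\dots,Z_r$ be its irreducible components, occurring with multiplicities $m_i\ge 1$, so that $\deg X_0=\sum_i m_i\deg Z_i$.

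The key claim is that each $Z_i$ spans $\bP^{n-1}_k$. Suppose some component $Z$ has linear span $\Lambda:=\langle Z\rangle\subsetneq\bP^{n-1}_k$; since $\dim Z=n-3$, either $\dim\Lambda=n-3$ or $\dim\Lambda=n-2$. If $\dim\Lambda=n-3$ then $Z=\Lambda$ is a linear $\bP^{n-3}$, which in suitable coordinates is $V(x_1,x_2)$, so $\bar f,\bar g\in(x_1,x_2)$; for the weight system $\rho=(1,1,0,\dots,0)$ one then gets $\val_\rho(f),\val_\rho(g)\ge 1$, hence $\mult_\rho(\cP)\ge 2$ by Lemma \ref{val}, while semistability forces $\mult_\rho(\cP)\le\tfrac{4}{n}\sum w_i=\tfrac{8}{n}<2$, a contradiction. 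If $\dim\Lambda=n-2$, say $\Lambda=V(x_1)$, then $Z\subset\Lambda\cong\bP^{n-2}$ is an irreducible hypersurface of some degree $d\ge 2$ (it is $\ge 2$ because $Z$ spans $\Lambda$), so the homogeneous ideal of $Z$ in $\Lambda$ is generated by a single form $\phi$ of degree $d$. Now $\bar f|_\Lambda$ and $\bar g|_\Lambda$ are degree-$2$ elements of $(\phi)$: if $d\ge 3$ they both vanish, forcing $\bar f,\bar g\in(x_1)$; if $d=2$ they are scalar multiples of $\phi$, so a suitable nonzero $k$-combination of $\bar f,\bar g$ lies in $(x_1)$. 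Either way, some nonzero element of the pencil $\langle\bar f,\bar g\rangle$ has the form $x_1\cdot(\text{linear})$, a quadric of rank $\le 2$, contradicting Lemma \ref{IdealHasNoQuadricWithRank1or2}. This proves the claim.

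Granting the claim, each $Z_i$ is a non-degenerate subvariety of codimension $2$ in $\bP^{n-1}_k$, hence $\deg Z_i\ge 3$. Then $4=\deg X_0=\sum_i m_i\deg Z_i\ge 3\sum_i m_i$ forces $\sum_i m_i=1$: there is a single component, occurring with multiplicity one. Thus $X_0$ is irreducible and generically reduced; since it is Cohen--Macaulay, Serre's criterion ($R_0+S_1$) shows it is reduced. Therefore $X_0$ is integral.

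The step I expect to be the main obstacle is the non-degeneracy claim — concretely, ruling out the two degenerate configurations. The linear-component case is the only one where full semistability (not merely Lemma \ref{IdealHasNoQuadricWithRank1or2}) is needed, and the only place the hypothesis $\dim X_K\ge 2$ enters; there the inequality $\val_\rho(f)\ge 1$ for $\rho=(1,1,0,\dots,0)$ follows because any monomial $x_ix_j$ of $f$ with $i,j\ge 3$ has coefficient lying in $tR$ (its reduction is a coefficient of $\bar f\in(x_1,x_2)$, hence zero), while every other monomial contributes at least $1$ through the weights. The degenerate nonlinear case is, by contrast, a routine linear-algebra manipulation producing a rank-$\le 2$ form in the pencil.
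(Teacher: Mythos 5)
Your proof is correct and follows essentially the same strategy as the paper's: linear components are destabilized by $\rho=(1,1,0,\ldots,0)$ via Lemma \ref{val}, and components spanning only a hyperplane force a rank $\leq 2$ quadric into the pencil, contradicting Lemma \ref{IdealHasNoQuadricWithRank1or2}. Your packaging via spans of components plus the bound $\deg\geq\operatorname{codim}+1$ for non-degenerate irreducible varieties is a mild reorganization of the paper's enumeration of degree decompositions, and your restriction argument $\bar f|_\Lambda,\bar g|_\Lambda\in(\phi)$ is in fact a slightly cleaner justification of the paper's Case (2) claim that a rank $\leq 2$ form lies in the pencil.
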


      \begin{proof}
      	Assume that $X_0$ is not integral. By the degree considerations, there are following two cases:
      	
      	\begin{enumerate}
      		\item $X_0$ contains $(n-3)$-plane.
      		\item $X_0$ is a union of two degree $2$ components $Y_1, Y_2$ (possibly $Y_1=Y_2$ and $X_0$ is non-reduced).
      	\end{enumerate}
      	
      	\noindent
      	Case (1): In some coordinates, $\overline{I}\subset(x_1, x_2)$. By Lemma \ref{val}, $\cP$ is destabilized by $\rho=(1, 1, 0,\ldots, 0)$.
      	
      	\noindent
      	Case (2): If $Y_1\neq Y_2$ (resp. $Y_1=Y_2$), since each degree $2$ component is contained in some $(n-2)$-plane, in some coordinates we may suppose $x_1x_2\in\overline{I}$ (resp. $x_1^2\in\overline{I}$). This contradicts Lemma \ref{IdealHasNoQuadricWithRank1or2}.
      \end{proof}
  
    Combining Proposition \ref{X_0IsIntegral} and smoothness of generic fibre $X_K$, $X$ is regular in codimension $1$. Moreover, since $X$ is a complete intersection, $X$ satisfies the Serre's condition $R_1$ and $S_2$, i.e., $X$ is normal. Using adjunction, $\cO(-K_X)=-\cO(2+2-(n-1)-1)=\cO(n-4)$, thus $-K_X$ is $\pi$-ample.

    \subsection{Singularities of semistable models}

    In this subsection, we investigate the singularities of semistable $(2, 2)$-complete intersections. Firstly, we show that $X$ has isolated singularities. We use the following result which is classification of non-normal integral $(2, 2)$-complete intersentions.
    
    \begin{thm}\cite[Theorem 1.1.]{LPS}\label{ClassificationOf(2,2)}\\
    	Let $k$ be an algebraically closed field with $\chara(k)\neq2$. Suppose that $X\subset\bP^n_k$ is a complete intersection defined by two quadrics $f, g\in k[x_1,\ldots, x_{n+1}]$. If $X$ is irreducible, non-normal, and not a cone, then $X$ can be transformed into one of the following projectively non-equivalent cases:
    	\begin{enumerate}
    		\item $n=3$ and $f=x_1^2+x_2x_3, g=x_4^2+x_1x_3$,
    		\item $n=3$ and $f=x_1^2+x_2x_3, g=x_4^2+x_1x_3+x_1x_4$,
    		\item $n=4$ and $f=x_1^2+x_2x_3, g=x_4^2+x_3x_5$,
    		\item $n=4$ and $f=x_1^2+x_2x_3, g=x_2x_4+x_3x_5$,
    		\item $n=4$ and $f=x_1^2+x_2x_3, g=x_2^2+x_1x_5+x_3x_4$,
    		\item $n=5$ and $f=x_1^2+x_2x_3, g=x_1x_5+x_2x_6+x_3x_4$.
    	\end{enumerate}
    \end{thm}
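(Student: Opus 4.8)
The plan is to translate non-normality into an explicit condition on the pencil of quadrics attached to $f,g$ and then to run through the Kronecker--Weierstrass normal form for such a pencil. Write $X=\{f=g=0\}\subset\bP^n_k$ and let $A,B$ be the symmetric $(n+1)\times(n+1)$ matrices of $f,g$, so $\dim X=n-2$. The only general structural input I would use is that a complete intersection is Cohen--Macaulay, hence satisfies Serre's $S_2$; by Serre's criterion $X$ is non-normal exactly when it fails $R_1$, and since $X$ is an integral scheme over $k=\bar k$ it is generically smooth, so this means precisely $\dim\Sing X=\dim X-1=n-3$. Next, the Jacobian criterion gives $p\in\Sing X$ iff $(\lambda A+\mu B)p=0$ for some $[\lambda:\mu]\in\bP^1$, so
\[
\Sing X=\bigcup_{[\lambda:\mu]\in\bP^1}\Bigl(\bP\bigl(\ker(\lambda A+\mu B)\bigr)\cap X\Bigr),
\]
and, crucially, $p^{T}(\lambda_0 A+\mu_0 B)p=0$ automatically for $p\in\ker(\lambda_0 A+\mu_0 B)$, so the linear space $\bP(\ker(\lambda_0 A+\mu_0 B))$ already lies on the quadric $\{\lambda_0 f+\mu_0 g=0\}$; hence its slice of $X$ is cut out there by a single further quadric, and so has dimension $\operatorname{corank}(\lambda_0 A+\mu_0 B)-1$ if that space lies in $X$ and $\operatorname{corank}(\lambda_0 A+\mu_0 B)-2$ otherwise. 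The whole argument runs on this family of slices.

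The second step is to put the pencil $\{A,B\}$ into Kronecker--Weierstrass normal form — available because $k$ is algebraically closed of characteristic $\ne 2$ — as a block sum, with regular blocks governed by the roots of $\Delta(\lambda,\mu)=\det(\lambda A+\mu B)$ and their Jordan data, and, when $\Delta\equiv 0$, additional singular (``minimal index'') blocks. From the slice description I would read off: (i) a member of the pencil of rank $1$ makes $X$ non-reduced and one of rank $2$ makes $X$ reducible, so since $X$ is an integral variety every member has rank $\ge 3$; (ii) $A$ and $B$ have a common null vector — a zero $1\times1$ block — iff $X$ is a cone. Combining these with $\dim\Sing X=n-3$ produces the key dichotomy. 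If $\Delta\not\equiv 0$, only finitely many slices are nonempty, so some member must have corank $\ge n-2$, i.e.\ rank exactly $3$, and the slice analysis then forces the $\bP^{n-3}$ which is the vertex of this rank-$3$ quadric to lie in $X$. If $\Delta\equiv 0$, the slices form a one-parameter family, and non-normality forces a minimal-index block to sweep out an $(n-3)$-dimensional subvariety of $\Sing X$.

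In the first branch, after a coordinate change assume the rank-$3$ member is $f=x_1^2+x_2x_3$; then $\{x_1=x_2=x_3=0\}\subseteq X$ forces $g\in(x_1,x_2,x_3)$, say $g=x_1\ell_1+x_2\ell_2+x_3\ell_3$ with $\ell_i$ linear, and the partial derivatives of $f$ and $g$ span $\langle x_1,x_2,x_3,\ell_1,\ell_2,\ell_3\rangle$; since $X$ is not a cone this must be the whole space of linear forms, so $n+1\le 6$, i.e.\ $n\le 5$. A parallel count in the second branch (a minimal-index block involves few variables, and not being a cone bounds the total) again gives $n\le 5$. It then remains, for each $n\in\{3,4,5\}$, to enumerate the admissible configurations: in the first branch this is the classification of triples $(\ell_1,\ell_2,\ell_3)$ modulo the subgroup preserving $f$ together with the pencil action $g\mapsto g+cf$; in the second branch it is the list of minimal-index patterns that are neither cones nor reducible. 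This is a finite check, and it produces exactly the six families (1)--(6); one then verifies directly that each is indeed irreducible, non-normal and not a cone, and that distinct families are projectively inequivalent — for instance the root multiplicity pattern of $\Delta$ (respectively $\Delta\equiv0$ for case (3)) already separates them.

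The step I expect to be the real obstacle is establishing the dichotomy together with carrying out the sieve. Making the formula $\dim\Sing X=\operatorname{corank}-1$ (or $-2$) uniform requires care in the borderline cases where some $\bP(\ker(\lambda_0 A+\mu_0 B))$ lies entirely in $X$, and the $\Delta\equiv 0$ case needs the swept-out family to be analyzed without over- or under-counting its dimension. Equally delicate is showing the enumeration is complete and stops at $n=5$: one must rule out spurious configurations — in particular block patterns that appear to give an $(n-3)$-dimensional singular locus but in fact force $X$ to be reducible (a degenerate slice becoming a full-dimensional component, as for two disjoint minimal-index blocks) — and confirm that no admissible configuration has been missed. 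Once the finite list of admissible patterns is pinned down, writing out the normal forms and checking pairwise projective inequivalence is routine.
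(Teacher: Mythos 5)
This statement is not proved in the paper at all: it is quoted as Theorem 1.1 of [LPS] and used as a black box, so there is no internal proof to compare your argument against. On its own terms, your strategy is a legitimate and fairly classical route to such a classification: Serre's criterion correctly reduces non-normality to $\dim\Sing X=n-3$; the identification of $\Sing X$ with $\bigcup_{[\lambda:\mu]}\bP(\ker(\lambda A+\mu B))\cap X$ is valid in characteristic $\neq2$; the exclusion of rank $\le 2$ members and the cone criterion via a common kernel vector are correct; and the resulting dichotomy (a rank-$3$ member whose vertex $\bP^{n-3}$ lies in $X$ when $\Delta\not\equiv0$, versus a one-parameter family of kernels sweeping out $\Sing X$ when $\Delta\equiv0$), together with the bound $n\le5$ coming from the span of the partial derivatives, is sound and consistent with all six listed normal forms (cases (1)--(2) after exchanging the roles of $f$ and $g$, and case (3) being exactly the $\Delta\equiv0$ branch).

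The gap is that the part of the argument that actually constitutes the classification is not carried out. In the first branch you reduce to classifying triples $(\ell_1,\ell_2,\ell_3)$ of linear forms modulo the stabilizer of $x_1^2+x_2x_3$ and the substitution $g\mapsto g+cf$, and in the second branch to listing the admissible singular blocks of the symmetric Kronecker form; both are only asserted to be ``finite checks.'' For a classification theorem these checks \emph{are} the theorem: completeness of the list, exclusion of configurations that force $X$ to be reducible, non-reduced, or a cone, and pairwise projective inequivalence all live there, and none of it is done (the invariant you propose for inequivalence, the root multiplicities of $\Delta$, also does not obviously separate all five nondegenerate cases without further refinement by the kernel data). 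Until that enumeration is executed, what you have is a credible proof outline rather than a proof.
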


    Let $\lambda$ be an element of DVR $R$ and $t$ be a uniformizer of $R$. If $\lambda$ is divisible by $t$, we write $\displaystyle\overline{\left(\frac{\lambda}{t}\right)}:=\frac{\lambda}{t}\Big|_{t=0}$.

    \begin{lem}
    	Suppose $n\geq 5$. Let $X\subset\bP^{n-1}_R$ be a semistable model of $(2, 2)$-complete intersections.
    	Then $X$ is regular in codimension $2$.
    	In particular, if $n-1=4$, $X$ has only isolated singularities.
    \end{lem}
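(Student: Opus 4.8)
The plan is to confine $\Sing(X)$ to the central fibre, reduce by the classification of non-normal $(2,2)$-complete intersections to the single question of whether $X$ can be singular along a codimension-$3$ linear subspace, and settle that question with an explicit destabilizing weight system. First the reductions. Since $X_K$ is smooth, $\Sing(X)\subseteq X_0$. Because $X\to\Spec R$ is flat and $X$ is a complete intersection in the regular scheme $\bP^{n-1}_R$, the morphism $X\to\Spec R$ is smooth at every point where the fibre $X_0$ is smooth, so such a point is a regular point of $X$; hence $\Sing(X)\subseteq\Sing(X_0)$. By Proposition~\ref{X_0IsIntegral}, $X_0$ is integral of dimension $n-3$, and being a complete intersection it satisfies $S_2$. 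If $X_0$ is normal, then $\Sing(X_0)$ has codimension $\ge 2$ in $X_0$, so $\dim\Sing(X)\le n-5=\dim X-3$, which is the assertion (and gives $\Sing(X)$ finite when $n=5$); so I assume $X_0$ is not normal.

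Since a cone over a cone is a cone and non-normality is inherited by cones, I may then write $X_0$ as a cone with linear vertex $\Lambda$ (possibly empty) over an integral $(2,2)$-complete intersection $Z\subseteq\bP^m_k$ which is not a cone and is itself non-normal. By Theorem~\ref{ClassificationOf(2,2)} (valid since $\chara k\ne 2$), $Z$ is projectively equivalent to one of the six cases (1)--(6); a direct inspection of those equations shows that in every case $\Sing(Z)$ is a codimension-$3$ linear subspace of $\bP^m$, along which $Z$ has generically corank-one singularities. Hence $\Sing(X_0)$ is the union of a codimension-$3$ linear subspace $W\subseteq\bP^{n-1}_k$ with a closed subset of dimension $\le n-5$. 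As $W$ is irreducible, the whole statement now reduces to the claim that $X$ is \emph{not} singular along all of $W$: granting this, $\Sing(X)\cap W$ is a proper closed subset of $W$ of dimension $\le n-5$, and together with the bound on the rest of $\Sing(X_0)$ this yields $\dim\Sing(X)\le n-5$.

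To prove the claim I would argue by contradiction, so suppose $\cP$ is semistable and $X$ is singular along $W$, which after a coordinate change is $V(x_1,x_2,x_3)$. The inclusion $W\subseteq X$ forces $\overline f,\overline g\in(x_1,x_2,x_3)$; writing $\overline f=x_1A_1+x_2A_2+x_3A_3$ and similarly for $\overline g$, the requirement that the Jacobian of $(f,g)$ with respect to $x_1,\dots,x_n$ and a uniformizer $t$ have rank $\le 1$ at every point of $W$ becomes, upon restricting to $W$, the conditions that the $2\times 3$ matrix of linear forms with rows $(A_1,A_2,A_3)$ and $(A_1',A_2',A_3')$ have rank $\le 1$ on $W$, plus a compatibility relation on $W$ between the reductions modulo $t$ of $\partial f/\partial t$ and $\partial g/\partial t$. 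If that matrix vanishes on $W$ we get $\overline f,\overline g\in(x_1,x_2,x_3)^2$, i.e. a two-dimensional space of ternary quadrics all of rank $\ge 3$ by Lemma~\ref{IdealHasNoQuadricWithRank1or2}, which is impossible since the binary form $\det(\lambda\overline A+\mu\overline B)$, nonzero of degree $3$ as $\overline A$ is invertible, has a root. In the remaining cases — which, after classifying how a rank-$\le 1$ pencil of linear forms can degenerate, reduce to a short list of normal forms for $(\overline f,\overline g)$ — I would first (possibly after replacing the basis $\{f,g\}$ of $\cP$) apply an $R$-linear change of variables of the form $x_i\mapsto x_i+t\cdot(\text{linear})$ to absorb the lowest-order $t$-corrections, arranging that in the new coordinates the coefficients of $t$ in $f$ and $g$ also lie in $(x_1,x_2,x_3)$; then, in each normal form, I would exhibit an effective weight system $\rho$ — the prototype being weight $2$ on $x_1,x_2,x_3$ and weight $1$ elsewhere, suitably perturbed to match whichever square or mixed monomial still appears in $\overline f$ and in $\overline g$ — for which Lemma~\ref{val} gives $\mult_\rho(\cP)\ge\val_\rho(f)+\val_\rho(g)$ strictly greater than $\tfrac4n\sum_i w_i$, contradicting semistability.

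The formal parts above (the two reduction steps and the closing dimension count) are routine; the hard part is this last step — organizing the finitely many degenerations of the $2\times 3$ matrix into normal forms, performing the coordinate changes over $R$ that clear the leading $t$-terms, and producing for each normal form a weight that genuinely violates the semistability inequality. The verification is delicate for small $n$, notably $n=5$, where the naive weight $(2,2,2,1,\dots)$ falls just short and must be replaced by a tailored perturbation; this case analysis is the technical core of the argument. Finally, for $n=5$ the conclusion "regular in codimension $2$" reads "$X$ has only isolated singularities".
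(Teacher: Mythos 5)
Your reductions are correct and coincide with the paper's: confine $\Sing(X)$ to $\Sing(X_0)$, observe that failure of the conclusion forces $X_0$ to be non-normal, strip off the cone structure to land in the classification of Theorem~\ref{ClassificationOf(2,2)}, note that in each of the six cases the singular locus of the central fibre is an irreducible codimension-$3$ linear space $W$, and reduce to showing that $X$ cannot be singular along all of $W$. Your observation that the sub-case $\overline f,\overline g\in(x_1,x_2,x_3)^2$ is killed by a root of the binary cubic $\det(\lambda\overline A+\mu\overline B)$ together with Lemma~\ref{IdealHasNoQuadricWithRank1or2} is also sound. The problem is that everything after that point --- which you yourself identify as ``the technical core'' --- is only a sketch, and it is precisely the content of the lemma. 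The paper's proof spends essentially all of its length on this step: for each of the six normal forms it works on an affine chart centred at a moving point of $W$, writes down the degree-$1$ parts of $f$ and $g$ in the variables \emph{and in $t$}, and extracts from their forced linear dependence explicit divisibility conditions such as $t^2\mid\mu_{2,2},\mu_{2,j},\mu_{i,j}$; only then does a weight system destabilize.

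Two specific points in your sketch would not survive contact with the computation. First, you propose to ``arrange that the coefficients of $t$ in $f$ and $g$ also lie in $(x_1,x_2,x_3)$'' by a coordinate change $x_i\mapsto x_i+t\cdot(\text{linear})$; this is not something one can arrange in general --- it is a consequence that must be \emph{derived} from the hypothesis that the total space $X$ (not merely $X_0$) is singular along $W$, and in the paper only certain coefficients of one of the two quadrics become divisible by $t^2$, with the precise list depending on the case (a preliminary coordinate change of the type you describe is used only in Case~(3), and only on $f$). Second, your ``prototype'' weight ($2$ on $x_1,x_2,x_3$, $1$ elsewhere, i.e.\ $(1,1,1,0,\dots,0)$ after normalizing) matches only Cases~(4)--(6); Cases~(1)--(3) have singular locus $\{x_1=x_3=x_4=0\}$ and require $(1,0,1,1,0,\dots,0)$ or $(1,0,2,1,0,\dots,0)$, chosen to exploit exactly the divisibilities derived in that case. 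Incidentally your worry about $n=5$ is unfounded: with $(1,1,1,0,0)$ the threshold is $\tfrac{4}{5}\cdot 3=2.4$ and one obtains $\mult_\rho(\cP)\geq 2+1=3$, so no perturbation is needed. The architecture of your argument is right, but without the six explicit computations the proof is not there.
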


      \begin{proof}
      	Choose a coordinate on $\bP^{n-1}_R$ such that the number $m$ of variables to be needed to write $\overline{f}, \overline{g}$ is minimal. Then we may assume that $\overline{f}, \overline{g}\in k[x_1,\ldots, x_m]$. The closed subset $Y_0\subset\bP_k^{m-1}$ which defined by $\overline{f}=\overline{g}=0$ is not a cone. (If $X_0$ is not a cone, then $m=n$ and $X_0=Y_0$.)
      	
      	Assume that $X$ is singular along a codimension $2$ locus. Since the generic fibre $X_K$ is smooth, the singular locus of $X$ is contained in $X_0$, thus $X_0$ is singular along codimension $1$ locus i.e., $X_0$ is non-normal. From the above construction and Proposition \ref{X_0IsIntegral}, we can conclude $Y_0$ is irreducible, non-normal, and not a cone in $\bP^{m-1}_k$. Therefore, in some coordinate the form of the pair $(\overline{f}, \overline{g})$ is one of the cases (1)--(6) in theorem \ref{ClassificationOf(2,2)}. Set
      	$$f=\sum_{i,j\in\{1,\ldots, n\}}\lambda_{i, j}x_ix_j,\quad g=\sum_{i,j\in\{1,\ldots, n\}}\mu_{i, j}x_ix_j.$$
      	We show that each case of Theorem \ref{ClassificationOf(2,2)} cannot occur if $X$ is a semistable $(2,2)$-complete intersection.
      	
      	\textbf{Case (1)}\,: In this case, $X_0$ is singular along $\{x_1=x_3=x_4=0\}$, thus so is $X$ since we assumed that $X$ is singular along codimension $2$ locus.
      	Firstly we calculate the valuations of coefficients of equations on affine chart $x_2=1$. For any $a_5, a_6,\ldots, a_n\in k$, the degree $1$ (with respect to $x_1, x_3, x_4,\ldots  x_n, t$) part of a polynomial $f(x_1, 1, x_3, x_4, x_5+a_5, x_6+a_6,\ldots, x_n+a_n)$ is
      	$$x_3
      	+\overline{\left(\frac{\lambda_{2, 2}}{t}\right)}t
      	+\sum_{j=5}^n a_j\overline{\left(\frac{\lambda_{2, j}}{t}\right)}t
      	+\sum_{i, j\in\{5,\ldots, n\}} a_ia_j\overline{\left(\frac{\lambda_{i, j}}{t}\right)}t$$
      	On the other hand, the degree $1$ (with respect to $x_1, x_3, x_4,\ldots  x_n, t$) part of a polynomial $g(x_1, 1, x_3, x_4, x_5+a_5, x_6+a_6,\ldots, x_n+a_n)$ is
      	$$\overline{\left(\frac{\mu_{2, 2}}{t}\right)}t
      	+\sum_{j=5}^n a_j\overline{\left(\frac{\mu_{2, j}}{t}\right)}t
      	+\sum_{i, j\in\{5,\ldots, n\}} a_ia_j\overline{\left(\frac{\mu_{i, j}}{t}\right)}t.$$
      	Since the point $[0:1:0:0:a_5:\cdots:a_n](t=0)\in X$ is singular, these degree $1$ parts cannot be linearly independent. Therefore
      	$$\overline{\left(\frac{\mu_{2, 2}}{t}\right)}=\overline{\left(\frac{\mu_{2, j}}{t}\right)}=\overline{\left(\frac{\mu_{i, j}}{t}\right)}=0
      	\quad(i, j\in\{5, 6,\ldots, n\}),$$
      	i.e., $t^2\mid\mu_{2, 2} ,\mu_{2, i}, \mu_{i, j}$. 
      	
      	Then, for a weight system $\rho=(1, 0, 1, 1, 0, 0, \ldots, 0)$,
      	$$\mult_\rho(\cP)\geq 1+2=3,$$
      	which is a contradiction.

      	\textbf{Case (2)}\,: In the same way, we see that $t^2\mid\mu_{2, 2} ,\mu_{2, i}, \mu_{i, j}$ for $i, j\in\{5, 6,\ldots, n\}$. So $\rho=(1, 0, 1, 1, 0, 0,\ldots, 0)$ destabilizes $\cP$.

      	\textbf{Case (3)}\,: $X_0$ is singular along $\{x_1=x_3=x_4=0\}$.
      	By a coordinate change
      	$$x_3\mapsto x_3-\lambda_{2, 2}x_2-\sum_{j=5}^n\lambda_{2, j}x_j,$$
      	the coefficients of $x_2^2$ and $x_2x_j$ $(j=5, 6,\ldots, n)$ in $f$ become divisible by $t^2$. Since $\lambda_{2, 2}$ and $\lambda_{2, j}$ $(j=5, 6,\ldots, n)$ are divisible by $t$, the form of $\overline{f}=f|_{t=0}$ and $\overline{g}=g|_{t=0}$ are not changed. Thus we may assume that 
      	$$\overline{\left(\frac{\lambda_{2, 2}}{t}\right)}=\overline{\left(\frac{\lambda_{2, j}}{t}\right)}=0\quad(j=5, 6,\ldots, n).$$
      	For any $a_5, a_6,\ldots, a_n\in k$, the degree $1$ (with respect to $x_1, x_3, x_4,\ldots, x_n, t$) part of a polynomial $f(x_1, 1, x_3, x_4, x_5+a_5, x_6+a_6,\ldots, x_n+a_n)$ is
      	$$x_3+\overline{\left(\frac{\lambda_{2, 2}}{t}\right)}t+\sum_{j=5}^na_j\overline{\left(\frac{\lambda_{2, j}}{t}\right)}t+\sum_{i, j\in\{5,\ldots, n\}}a_ia_j\overline{\left(\frac{\lambda_{i, j}}{t}\right)}t.$$
      	The degree $1$ (with respect to $x_1, x_3, x_4,\ldots, x_n, t$) part of a polynomial $g(x_1, 1, x_3, x_4, x_5+a_5, x_6+a_6,\ldots, x_n+a_n)$ is
      	$$a_5x_3+\overline{\left(\frac{\mu_{2, 2}}{t}\right)}t+\sum_{j=5}^na_j\overline{\left(\frac{\mu_{2, j}}{t}\right)}t+\sum_{i, j\in\{5,\ldots, n\}}a_ia_j\overline{\left(\frac{\mu_{i, j}}{t}\right)}t.$$
      	Since these degree $1$ parts are not linearly independent, for any $a_5, a_6,\ldots, a_n\in k$ we have
      	\begin{align*}
      		& a_5\overline{\left(\frac{\lambda_{2, 2}}{t}\right)}
      		+\sum_{j=5}^na_5a_j\overline{\left(\frac{\lambda_{2, j}}{t}\right)}
      		+\sum_{i, j\in\{5,\ldots, n\}}a_5a_ia_j\overline{\left(\frac{\lambda_{i, j}}{t}\right)} \\
      		& =\overline{\left(\frac{\mu_{2, 2}}{t}\right)}
      		+\sum_{j=5}^na_j\overline{\left(\frac{\mu_{2, j}}{t}\right)}
      		+\sum_{i, j\in\{5,\ldots, n\}}a_ia_j\overline{\left(\frac{\mu_{i, j}}{t}\right)}.
      	\end{align*}
      	This implies
      	\begin{align*}
      		\overline{\left(\frac{\mu_{2, 2}}{t}\right)}=0,\quad
      		\overline{\left(\frac{\mu_{2, j}}{t}\right)}=0\,(j=6,\ldots, n),\\
      		\overline{\left(\frac{\lambda_{2, 2}}{t}\right)}-\overline{\left(\frac{\mu_{2, 5}}{t}\right)}=0,\\
      		\overline{\left(\frac{\mu_{i, j}}{t}\right)}=0\quad(i, j\in\{6,\ldots, n\}),\\
      		\overline{\left(\frac{\lambda_{2, j}}{t}\right)}-\overline{\left(\frac{\mu_{5, j}}{t}\right)}=0,\quad(j=5, 6,\ldots, n),\\
      		\overline{\left(\frac{\lambda_{i, j}}{t}\right)}=0\quad(i, j\in\{5, 6,\ldots, n\}).
      	\end{align*}
      	Since we already have $\overline{\left(\lambda_{2, 2}/t\right)}=\overline{\left(\lambda_{2, j}/t\right)}=0$ for $j=5, 6,\ldots, n$, we obtain
      	$$\overline{\left(\frac{\mu_{2, 5}}{t}\right)}=\overline{\left(\frac{\mu_{5, j}}{t}\right)}=0\quad(j=5, 6,\ldots, n).$$
      	Then, for $\rho=(1, 0, 2, 1, 0, 0,\ldots, 0)$,
      	$$\mult_\rho(\cP)\geq2+2=4.$$
      	Thus $\rho$ destabilizes $\cP$.
      	
      	\textbf{Case (4)}\,: $X_0$ is singular along $\{x_1=x_2=x_3=0\}$. For any $a_5,\ldots, a_n\in k$, the degree $1$ (with respect to $x_1, x_2, x_3, x_5, x_6,\ldots, x_n, t$) part of a polynomial $f(x_1, x_2, x_3, 1, x_5+a_5, x_6+a_6,\ldots, x_n+a_n)$ is
      	$$\overline{\left(\frac{\lambda_{4, 4}}{t}\right)}t
      	+\sum_{j=5}^n a_j\overline{\left(\frac{\lambda_{4, j}}{t}\right)}t
      	+\sum_{i, j\in\{5,\ldots, n\}}a_ia_j\overline{\left(\frac{\lambda_{i, j}}{t}\right)}t.$$
      	The degree $1$ (with respect to $x_1, x_2, x_3, x_5, x_6,\ldots, a_n, t$) part of a polynomial $g(x_1, x_2, x_3, 1, x_5+a_5, x_6+a_6,\ldots, x_n+a_n)$ is
      	$$x_2+a_5x_3
      	+\overline{\left(\frac{\mu_{4, 4}}{t}\right)}t
      	+\sum_{j=5}^n a_j\overline{\left(\frac{\mu_{4, j}}{t}\right)}t
      	+\sum_{i, j\in\{5, 6,\ldots, n\}}a_ia_j\overline{\left(\frac{\mu_{i, j}}{t}\right)}t.$$
      	This implies $t^2\mid\lambda_{i, j}$ for $i, j=4, 5,\ldots, n$.
      	Then, for $\rho=(1, 1, 1, 0, 0,\ldots, 0)$,
      	$$\mult_\rho(\cP)\geq2+1=3,$$
      	so $\rho$ destabilizes $\cP$.
      	
      	\textbf{Case (5), (6)}\,: In each case, $X_0$ is singular along $\{x_1=x_2=x_3=0\}$. By calculation on the affine chart $x_4=1$, one can show that $t^2\mid\lambda_{i, j}$ for $i, j=4, 5,\ldots, n$ as in case (4). Then $\rho=(1, 1, 1, 0, 0,\ldots, 0)$ destabilizes $\cP$.
      \end{proof}

    We use the following lemma to show that semistable threefolds have only terminal singularities.
    
    \begin{lem}\label{SemistableModelHasHypersurfSing}
    	Let $X$ be a semistable model of $(2, 2)$-complete intersections. Then every singular point on $X$ is a hypersurface singularity.
    \end{lem}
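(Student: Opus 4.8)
The plan is a proof by contradiction: assume $X$ has a singular point $p$ which is not a hypersurface singularity, and produce a weight system violating the semistability inequality \textup{(\ref{StabConditionOf(2,2)})}. Since $X_K$ is smooth, the singular locus of $X$ is a closed subset of the central fibre $X_0$; as $X_0$ is of finite type over the algebraically closed field $k$, this closed set has a closed point, so we may assume $p$ is closed with residue field $k$. After a coordinate change by an element of $GL_n(R)$ we may take $p=[0:\cdots:0:1]$ and work on the chart $x_n=1$ with $y_i=x_i/x_n$, so that $\mathcal{O}:=\mathcal{O}_{\bP^{n-1}_R,p}$ is a regular local ring of dimension $n$ with regular system of parameters $t,y_1,\dots,y_{n-1}$. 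Writing $f=\sum_{i\le j}\lambda_{ij}x_ix_j$, $g=\sum_{i\le j}\mu_{ij}x_ix_j$ with $\lambda_{ij},\mu_{ij}\in R$, the dehomogenisation of $f$ is $F=\lambda_{nn}+\sum_{i<n}\lambda_{in}y_i+\sum_{i\le j<n}\lambda_{ij}y_iy_j$, and similarly $G$.

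The crux is a translation of ``not a hypersurface singularity'' into congruences on the coefficients. Since $\mathcal{O}_{X,p}=\mathcal{O}/(F,G)$ is a codimension-$2$ complete intersection in the regular ring $\mathcal{O}$, its embedding dimension equals $n$ minus the dimension of the span of the images of $F$ and $G$ in $\mathfrak{m}_p/\mathfrak{m}_p^2$. Hence $\mathcal{O}_{X,p}$ is a hypersurface singularity (or smooth) exactly when at least one of $F,G$ lies outside $\mathfrak{m}_p^2$: if, say, $F\notin\mathfrak{m}_p^2$ then $F$ is part of a regular system of parameters and $\mathcal{O}_{X,p}=(\mathcal{O}/(F))/(\overline G)$ is a quotient of the regular ring $\mathcal{O}/(F)$ by one element; conversely, if $F,G\in\mathfrak{m}_p^2$ then the embedding dimension is $n=\dim\mathcal{O}_{X,p}+2$. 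Now the quadratic-in-$y$ part of $F$ always lies in $\mathfrak{m}_p^2$, and $p\in X_0$ forces $t\mid\lambda_{nn}$; inspecting $\mathfrak{m}_p/\mathfrak{m}_p^2$ one sees $F\in\mathfrak{m}_p^2$ if and only if $t^2\mid\lambda_{nn}$ and $t\mid\lambda_{in}$ for all $i<n$. Therefore, if $p$ is not a hypersurface singularity, then $t^2\mid\lambda_{nn},\mu_{nn}$ and $t\mid\lambda_{in},\mu_{in}$ for $1\le i<n$. In particular $\overline f$ and $\overline g$ involve only $x_1,\dots,x_{n-1}$; since $\{f,g\}$ is an $R$-basis of the pencil $\cP$ we have $\overline f\neq0$ and $\overline g\neq 0$, so in each of $f,g$ some coefficient $\lambda_{ij}$, resp. $\mu_{ij}$, with $i,j<n$ is a unit of $R$.

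Finally I would apply the effective weight system $\rho=(1,1,\dots,1,0)$, i.e. weight $1$ on $x_1,\dots,x_{n-1}$ and $0$ on $x_n$. Under $\rho$ the coefficient of $x_ix_j$ in $f$ is multiplied by $t^2$ if $i,j<n$, by $t$ if exactly one of $i,j$ equals $n$, and by $1$ if $i=j=n$. Combined with $t\mid\lambda_{in}$, $t^2\mid\lambda_{nn}$ and the presence of a unit among the $\lambda_{ij}$ with $i,j<n$, this gives $\val_\rho(f)=2$, and likewise $\val_\rho(g)=2$. By Lemma \ref{val},
$$\mult_\rho(\cP)\ \ge\ \val_\rho(f)+\val_\rho(g)\ =\ 4 .$$
But $\tfrac{4}{n}\sum_{i=1}^{n}w_i=\tfrac{4(n-1)}{n}=4-\tfrac{4}{n}<4$, so $\rho$ violates \textup{(\ref{StabConditionOf(2,2)})}, contradicting the semistability of $\cP$. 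Hence no such $p$ exists and every singular point of $X$ is a hypersurface singularity.

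The main obstacle is getting the local-algebraic translation exactly right — identifying ``not a hypersurface singularity at $p$'' with the congruences $t^2\mid\lambda_{nn},\mu_{nn}$ and $t\mid\lambda_{in},\mu_{in}$ — and, somewhat counter-intuitively, realising that the destabilising weight system must put positive weight on all variables \emph{except} the distinguished coordinate $x_n$, rather than on $x_n$ itself; once these two points are in place the semistability inequality fails essentially immediately, and (unlike the preceding two results) neither the integrality of $X_0$ nor the codimension-$2$ regularity of $X$ is needed.
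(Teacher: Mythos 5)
Your proof is correct and takes essentially the same route as the paper: the paper's argument is exactly the observation that if $P=[0:\cdots:0:1]$ is not a hypersurface singularity then both $f$ and $g$ lie in $(x_1,\ldots,x_{n-1},t)^2$ (your congruences $t^2\mid\lambda_{nn},\mu_{nn}$ and $t\mid\lambda_{in},\mu_{in}$), followed by the same destabilizing weight system $\rho=(1,1,\ldots,1,0)$ giving $\mult_\rho(\cP)\geq 2+2=4>\tfrac{4}{n}(n-1)$. Your write-up simply makes the local-algebra translation explicit where the paper leaves it implicit.
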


      \begin{proof}
      	Let $P=[0:\cdots:0:1]$ be a singular point on $X$. If either $f$ or $g$ is not contained in $(x_1,\ldots, x_{n-1}, t)^2$, $P$ is a hypersurface singularity. Suppose not. Then, for $\rho=(1, 1,\ldots, 1, 0)$,
      	$$\displaystyle\mult_\rho(\cP)\geq 2+2=4>\frac{4}{n}(n-1),$$
      	which is a contradiction.
      \end{proof}

    \section{Singularities on semistable threefolds}
    
    In this section, we prove that semistable del Pezzo fibrations only have terminal singularities, and thus they are standard models of del Pezzo fibrations.
    
    \subsection{Elephants on semistable threefolds}
    
    Let $X$ be a threefold over a field $k$. An \emph{elephant} (resp. \emph{general elephant}) is an element (resp. a general element) of the linear system $|-K_X|$ of the anticanonical divisor. We use the following result.

    \begin{thm}\label{CanonicalElephant}\cite[Corollary 11]{Kol2}\\
    	Let $k$ be an algebraic closed field of arbitrary characteristic. Suppose that $X$ is a scheme over $k$ and $P\in X$ is an isolated normal threefold singularity. If there exists an elephant $E\subset X$ which contains $P$ and $P\in E$ is an isolated du Val singularity, then $P\in X$ is a terminal singularity.
    \end{thm}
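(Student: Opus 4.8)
The plan is to reduce the statement to the local structure theory of compound Du Val points. First I would carry out the standard Gorenstein reduction: since $P\in E$ is Du Val it is a Gorenstein (rational) surface singularity, so $\omega_E\cong\cO_E$ near $P$; as $E\in|-K_X|$ we have $\cO_X(K_X+E)\cong\cO_X$, and a standard argument using adjunction and the vanishing of the different $\mathrm{Diff}_E(0)$ near $P$ (it is supported on $E\cap\Sing X=\{P\}$, of codimension $2$ in the surface $E$, hence is $0$) shows that $X$ is Gorenstein at $P$ and $E$ is a Cartier divisor through $P$, cut out locally by a single $h\in\fm_P\subset\cO_{X,P}$.

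The core of the argument is to show $a(F,X)>0$ for every divisorial valuation $F$ over $X$. Fix such an $F$ with center $Z=\mathrm{center}_X(F)$. If $Z\neq\{P\}$, then since $P$ is an isolated singularity the generic point of $Z$ lies in the regular locus of $X$; as $X$ is normal, $Z$ has codimension $\ge 2$, so $\cO_{X,\eta_Z}$ is a regular local ring of dimension $\ge 2$, and hence every exceptional divisor over it, in particular $F$, has discrepancy $\ge 1$. If $Z=\{P\}$, then $\ord_F(E)\ge 1$ because $E$ is a Cartier divisor through $P$; combined with the adjunction identity $a(F,X)=a(F,X,E)+\ord_F(E)$, it then suffices to show $a(F,X,E)>-1$, i.e. that the pair $(X,E)$ is purely log terminal near $P$. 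Since $E$ is Cartier the different $\mathrm{Diff}_E(0)$ is zero, and $(E,0)$ is klt (Du Val implies canonical, hence klt), so inversion of adjunction shows $(X,E)$ is plt in a neighbourhood of $E$, hence near $P$. Combining the two cases, all discrepancies of $X$ near $P$ are positive, i.e. $P\in X$ is terminal.

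The main obstacle is that the theorem is claimed in arbitrary characteristic, so the inversion of adjunction step cannot simply be quoted from the characteristic-zero literature (there is no Kawamata--Viehweg vanishing, and the cases $p=2,3$ lie outside the threefold MMP machinery). This is really the substance of Koll\'ar's result, and I expect the honest proof of it to be characteristic-free and hands-on: resolve $E$ crepantly (available in all characteristics by Artin and Lipman), take the blow-up $g\colon Y\to X$ of $P$ together with the strict transform $\widetilde E$, and compare the exceptional configuration of $Y$ with the minimal resolution of $E$ via $K_{\widetilde E}=(K_Y+\widetilde E)|_{\widetilde E}$, proceeding by induction on a resolution. The delicate point is to control exceptional divisors of $X$ over $P$ whose strict transforms become disjoint from $\widetilde E$; for these one uses that each has $E$-order at least $1$ together with the fact that $E$ being Du Val (not merely klt) makes the relevant discrepancies over $E$ nonnegative. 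If instead one is content to invoke positive-characteristic inversion of adjunction for threefolds as a black box (known for $p$ large), the two-case computation above completes the proof immediately.
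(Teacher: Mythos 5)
First, for calibration: the paper offers no proof of this statement at all --- it is quoted from \cite[Corollary 11]{Kol2} and used as a black box --- so there is no internal argument to compare yours against. Judged on its own, your write-up has the correct characteristic-zero skeleton (split on whether the centre of $F$ is $P$; use $a(F,X)=a(F,X,E)+\ord_F(E)$; reduce to pltness of $(X,E)$ via inversion of adjunction), but it contains two genuine gaps.

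The first is your ``Gorenstein reduction''. Vanishing of $\mathrm{Diff}_E(0)$ as a divisor on $E$ does \emph{not} imply that $E$ is Cartier or that $X$ is Gorenstein at $P$: take $X=\bA^3/\pm1$ (the $\tfrac12(1,1,1)$ point, for $\chara k\neq2$) and $E=\{x=0\}/\pm1\in|-K_X|$, which is an $A_1$ singularity; the quotient map is \'etale in codimension one, so the different vanishes, yet $E$ is only $2$-Cartier and $X$ is not Gorenstein. This is not a cosmetic issue: without Cartierness you only get $\ord_F(E)>0$, and in the example $\ord_F(E)=\tfrac12$ for the weighted blow-up, so pltness of $(X,E)$ would only give $a(F,X)>-1$ (klt), not the required $a(F,X)>0$. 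Your argument needs $\ord_F(E)\geq1$, hence needs $E$ Cartier at $P$ as an \emph{input}. (In the paper's application this is harmless, since Lemma~\ref{SemistableModelHasHypersurfSing} makes every singular point a hypersurface singularity and $E$ a hyperplane section; but as a proof of the theorem as stated your derivation of Cartierness is false.) The second gap is the one you flag yourself: inversion of adjunction, or a characteristic-free surrogate, is precisely the content of Koll\'ar's result. Quoting threefold inversion of adjunction covers only $p>5$ (or $p=0$), while the paper needs $p=3$ as well, and your proposed hands-on replacement --- crepantly resolve $E$, blow up $P$, induct, ``control the exceptional divisors whose strict transforms miss the strict transform of $E$'' --- names the delicate point without resolving it. So the proposal is a correct reduction modulo a false intermediate claim and an essential step left as a sketch; since the paper itself outsources exactly this step to \cite{Kol2}, citing it (with the Cartier hypothesis made explicit) is the honest way to close the argument.
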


    Now we assume that $k$ is an algebraic closed field with $\chara(k)\neq2$. A du Val singularity at $P=(0, 0, 0)$ is an isolated hypersurface singularity in $k[[x, y, z]]$ with multiplicity $2$. Suppose that it is given by an equation $f\in k[[x, y, z]]$. Then it is one of the following (See \cite{Re2}):
    \begin{enumerate}
    	\item If the quadratic part of $f$ is reduced, then $f$ is transformed into the form $xy+z^{n+1}=0$ for some $n\geq1$. In this case, $P$ is called a type $A_n$ singularity. 
    	\item If $f=x^2+g(y, z)$ and the tangent cone of $g(y, z)$ is a cubic with at least two different factors, then $f$ is transformed into the form $x^2+z(y^2+z^{n-2})=0$ for some $n\geq4$. This is called a type $D_n$ singularity.
    	\item Otherwise, it is called a type $E$ singularities. We do not employ them in this paper, so we do not give their description here.
    \end{enumerate}

    By the following lemma, we can take an equation $e\in H^0(X, -K_X)$ which defines an elephant $E$ such that:
    
    $\bullet$ low degree terms of $e$ are in the form of desired one, and
    
    $\bullet$ the generic fibre $E_K$ of $E$ is smooth.

    \begin{lem}\label{SmoothnessOfE_K}
    	Fix $(n_1,\ldots, n_4)\in (\bZ_{\geq 0})^4$ and $(a_1,\ldots, a_4)\in R^4$ such that at least one of $a_i$ is a unit in $R$. Then, for general $(u_1,\ldots, u_5)\in k^5$, the elephant $E$ given by an equation $\sum_{i=1}^{4}(a_i+u_it^{n_i})x_i+u_5tx_5=0$ is smooth in generic fibre.
    \end{lem}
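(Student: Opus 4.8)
The plan is to exhibit $E_K$, for a generic choice of $u\in k^5$, as a general hyperplane section of the smooth surface $X_K\subset\bP^4_K$, and then to appeal to a characteristic‑free version of Bertini's theorem. (Here $n=5$, and by the adjunction computation of Section~4 one has $-K_X\cong\cO_X(1)$, so ``elephant'' means ``hyperplane section of $X$''.) The one genuine issue is that the parameter $u$ runs over the \emph{$k$-rational} family of linear forms $\sum_{i=1}^4(a_i+u_it^{n_i})x_i+u_5tx_5$, not over all hyperplanes defined over $K$; so after applying Bertini one must still descend the notion of ``general'' from $K$ to $k$.

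First I would form the universal elephant. Set $L:=\sum_{i=1}^4(a_i+u_it^{n_i})x_i+u_5tx_5\in K[u_1,\dots,u_5][x_1,\dots,x_5]$, let $\cE_K:=\{f=g=L=0\}\subset\bP^4_{\bA^5_K}$, and let $\Pi\colon\cE_K\to\bA^5_K$ be the projection. Then $\Pi$ is projective, hence proper, and for $u^\ast\in k^5\subset\bA^5(K)$ the curve $E_K$ of the statement is exactly the fibre $\Pi^{-1}(u^\ast)$. Let $Z\subset\bA^5_K$ be the image under $\Pi$ of the (closed) locus where $\Pi$ is not smooth; $Z$ is closed by properness of $\Pi$, and if $u^\ast\notin Z$ then $\Pi$ is smooth along $\Pi^{-1}(u^\ast)$, so $E_K$ is smooth over $K$. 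It thus suffices to establish: (i) $Z$ is a \emph{proper} closed subset of $\bA^5_K$; and (ii) the set of $k$-rational points lying in $Z$ is not Zariski dense in $\bA^5_k$.

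For (i): the assignment $u\mapsto\{L=0\}$ is a morphism $\psi\colon\bA^5_K\to(\bP^4_K)^\vee$ to the dual projective space, and since $t\in K^\times$ the affine‑linear substitution $(u_i)\mapsto(a_1+u_1t^{n_1},\dots,a_4+u_4t^{n_4},u_5t)$ is an isomorphism onto the space of coefficient vectors; hence $\psi$ is dominant. It follows that $\Pi$ is the base change along $\psi$ of the universal hyperplane‑section family $\cH_{X_K}\to(\bP^4_K)^\vee$, so the generic fibre of $\Pi$ is the generic hyperplane section of $X_K$, extended to the field $K(u_1,\dots,u_5)$. Now $X_K$ is a smooth irreducible surface spanning $\bP^4_K$, so its conormal variety has dimension $3$ and its dual variety $X_K^\vee\subset(\bP^4_K)^\vee$ is a proper closed subset; thus the generic point of $(\bP^4_K)^\vee$ lies outside $X_K^\vee$, and the standard transversality argument (valid in arbitrary characteristic) shows that the associated hyperplane section of $X_K$ is smooth. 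Hence the generic fibre of $\Pi$ is smooth, and since $\Pi$ is automatically flat over the generic point of $\bA^5_K$ it is smooth at every point of that fibre; therefore $Z\subsetneq\bA^5_K$.

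Step (ii) is the only delicate point — it is precisely why one cannot simply quote Bertini over $K$ — although the argument remains elementary, and I regard it as the main obstacle. Write $Z=V(h_1,\dots,h_m)$ with some $h:=h_j\neq0$ in $K[u_1,\dots,u_5]$, expand $h=\sum_\alpha c_\alpha u^\alpha$, and fix a $k$-basis $\beta_1,\dots,\beta_r$ of the finite‑dimensional $k$-subspace of $K$ spanned by the $c_\alpha$, so that $c_\alpha=\sum_l\gamma_{\alpha,l}\beta_l$ with $\gamma_{\alpha,l}\in k$. For $u^\ast\in k^5$ one computes $h(u^\ast)=\sum_l\beta_l\,p_l(u^\ast)$ with $p_l:=\sum_\alpha\gamma_{\alpha,l}u^\alpha\in k[u_1,\dots,u_5]$; by $k$-linear independence of the $\beta_l$, $h(u^\ast)=0$ forces $p_l(u^\ast)=0$ for all $l$. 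Since some $c_\alpha\neq0$, some $p_l$ is a nonzero polynomial over $k$, so the bad set $Z\cap\bA^5(k)$ is contained in the proper closed subset $V(p_l)\subset\bA^5_k$. Then for $u^\ast$ in the complementary dense open subset of $\bA^5_k$ we have $u^\ast\notin Z$, whence $E_K=X_K\cap\{L|_{u=u^\ast}=0\}$ is smooth, proving the lemma. I do not expect this argument to use $\chara k\neq2$, nor the hypothesis that some $a_i$ be a unit; those serve the surrounding analysis of the singularities of $E$ rather than the smoothness of the generic fibre.
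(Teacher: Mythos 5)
Your proof is correct and follows essentially the same route as the paper, which simply invokes Bertini in arbitrary characteristic (citing Kleiman) for the general hyperplane section of the smooth surface $X_K$ and then uses that the $k$-parametrized family of linear forms is Zariski dense in the space of all linear forms over $K$ because $k$ is infinite. Your step (i) replaces the citation by the standard dual-variety dimension count, and your step (ii) (expanding the defining equation of the bad locus over a $k$-basis of $K$) supplies the restriction-of-scalars detail needed to upgrade ``dense, hence some $u^\ast$ works'' to ``general $u^\ast\in k^5$ works,'' which the paper leaves implicit but does use later when imposing several generality conditions on $u$ simultaneously.
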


      \begin{proof}
      	Since the generic fibre $X_K$ of $X$ is smooth, the smoothness of the generic fibre of general elephants follows from Bertini's theorem for arbitrary characteristics \cite[12 Corollary]{Kle}. The subfield $k\subset K$ is an infinite field, therefore the set $\{(a_1+u_1t^{n_1},\ldots, a_4+u_1t^{n_4}, u_5)\in K^5|u_1,\ldots, u_5\in k\}$ is dense in $K^5$. Thus the general elephant $E$ given by an equation $\sum_{i=1}^{4}(a_i+u_it^{n_i})x_i+u_5tx_5=0$ is smooth in its generic fibre.
      \end{proof}

    Suppose that $P=[0:0:0:0:1]\in X$ is a singular point. By Lemma \ref{SemistableModelHasHypersurfSing}, we may assume that the equation $g(x_1,\ldots, x_4, 1)$ has nonzero linear terms. Then, replacing $f$ if we need to, we may assume that $f(x_1,\ldots, x_4, 1)$ has no linear terms. In particular, for coefficients $\lambda_{i, j}\in R$ where $f=\sum\lambda_{i, j}x_ix_j$, we have
    \begin{align}
    	t\mid\lambda_{i, 5}\quad(i=1, 2, 3, 4),\qquad t^2\mid\lambda_{5, 5}. \label{CoeffOfx_ix_5}
    \end{align}

    \begin{lem}\label{IsoSingOfElephant}
    	Let $P=[0:0:0:0:1]$ be a singular point on a semistable model $X$, and $E$ be a general elephant given by an equation $x_4+\sum_{i=1}^{4}u_it^{n_i}x_i+u_5tx_5=0$ such that each $n_i\in\bZ$ is large enough.
    	Then $P$ is an isolated singularity as a point of $E$.
    \end{lem}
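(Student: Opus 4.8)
The plan is to localise at $P$, reduce $E$ there to a hypersurface singularity in a three-dimensional regular local ring, and then use that $\Sing E$ lies in the central fibre together with the generality of the elephant to force $\Sing E$ to be finite near $P$.

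First I would pass to the complete local ring at $P$. In the chart $x_5=1$ we have $\widehat{\mathcal O}_{\bP^4_R,P}\cong k[[t,x_1,x_2,x_3,x_4]]$ with $P$ the origin and $\widehat{\mathcal O}_{E,P}=k[[t,x_1,x_2,x_3,x_4]]/(f',g',e')$, where $f'=f(x,1)$, $g'=g(x,1)$, $e'=(1+u_4t^{n_4})x_4+\sum_{i=1}^{3}u_it^{n_i}x_i+u_5t$. Since the $n_i$ are large the linear part of $e'$ is $x_4+u_5t$, so $e'=0$ solves (Weierstrass) for $x_4=\phi(x_1,x_2,x_3,t)\in(t)$; because $g'$ has nonzero linear part in $(t,x_1,\dots,x_4)$ (the reduction preceding the lemma, via Lemma \ref{SemistableModelHasHypersurfSing}) and $\phi\in(t)$, for general $(u_1,\dots,u_5)$ the image of $g'$ still has a nonzero linear part, so eliminating one more variable gives $\widehat{\mathcal O}_{E,P}\cong k[[y_1,y_2,y_3]]/(\bar h)$ with $\bar h$ the image of $f'$. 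By \eqref{CoeffOfx_ix_5}, $f'\in(t,x_1,\dots,x_4)^2$, hence $\bar h\in(y_1,y_2,y_3)^2$; in particular $P$ is a singular point of $E$ (of multiplicity $2$), and the assertion is equivalent to: the origin is isolated in $\Sing\{\bar h=0\}\subset\bA^3$.

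Next I would show $\Sing E\subseteq E_0$ and then cut it down. Being a complete intersection, $E$ is Cohen--Macaulay and flat over $R$ with one-dimensional fibres, and $E_K$ is smooth for general $(u_1,\dots,u_5)$ by Lemma \ref{SmoothnessOfE_K}; hence $E$ is regular along $E_K$ and $\Sing E\subseteq E_0=X_0\cap\{x_4=0\}$ (indeed $e|_{t=0}=x_4$ regardless of the $n_i,u_i$), which is a curve because $X_0$ is integral (Proposition \ref{X_0IsIntegral}) and not contained in $\{x_4=0\}$, else $x_4^2\in\overline{I}$ contradicting Lemma \ref{IdealHasNoQuadricWithRank1or2}. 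Now $\Sing X$, $\Sing X_0$ and the ramification locus of $(x_4,t)\colon X\dashrightarrow\bA^2$ form a fixed, finite union of curves and points ($X$ has isolated singularities since it is a semistable $(2,2)$-complete intersection with $n-1=4$); the conditions on $f',g'$ still leave us free to take the coordinate $x_4$ general among the hyperplanes through $P$, and then for general $u_5$ the curve $E_0$ is reduced near $P$ and meets each of those positive-dimensional strata in finitely many points only. A direct inspection of the Jacobian of $(f',g',e')$ along $E_0$ shows that a point of $E_0\setminus\{P\}$ can lie in $\Sing E$ only if it belongs to one of those strata (or, for the one exceptional value of $u_5$, to the ramification locus); hence $\Sing E$ is finite near $P$, i.e.\ $P$ is an isolated singular point of $E$. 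One can also read off the type: since $\overline{I}$ has no rank-$\le 2$ quadric, $\bar f$, and hence the quadratic part $Q$ of $f'$, has rank $\ge 3$, so for the general $W$ arising here $Q|_W$ has rank $3$ and $\bar h=y_1^2+y_2^2+y_3^2+(\text{higher order})$ is an $A_1$-point.

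I expect the genuine difficulty to be making this genericity effective in positive characteristic: one needs, for general $x_4$, that $E_0$ is reduced and meets the fixed strata of $\Sing X$, of $\Sing X_0$ (which may be non-normal), and of the ramification locus transversally in the relevant sense --- all of which are Bertini-type statements that are delicate in characteristic $p$ --- and one must rule out the residual degenerate configurations, for instance that $X_0$ is tangent to $\{x_4=0\}$ along a whole curve through $P$. I would expect the latter to be excluded exactly as in the proof that semistable $(2,2)$-complete intersections are regular in codimension $2$: by a chart computation producing an effective weight system $\rho$ with $\mult_\rho(\cP)>\frac{4}{5}\sum_i w_i$, contradicting semistability.
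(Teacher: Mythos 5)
There is a genuine gap: the heart of the lemma is not proved. Your reduction (localize at $P$, observe $\Sing E\subseteq E_0$ since $E_K$ is smooth, note that a positive-dimensional singular locus through $P$ would force $E_0$ to be singular along a line or a conic) agrees with the paper's setup, but the step that actually excludes this — and which constitutes essentially the whole proof in the paper — is replaced by genericity claims that are not available here. The coordinate $x_4$ is \emph{not} free to be taken general among hyperplanes through $P$: it is pinned down by the requirements preceding the lemma (the elephant's linear part must match $g^{(1)}$ or a prescribed coordinate so that the later $cA/cD$ analysis works), and after the $n_i$ are taken large the family of elephants is effectively the one-parameter family $x_4=u t x_5$. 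All members of this family have the \emph{same} central fibre $E_0=X_0\cap\{x_4=0\}$, so varying $u_5$ cannot make $E_0$ reduced or make it avoid a curve in $\Sing X_0$ — your claim that ``for general $u_5$ the curve $E_0$ is reduced near $P$'' contradicts your own observation that $e|_{t=0}=x_4$ independently of the $u_i$. In particular the case where $E_0$ is a doubled conic (non-reduced) genuinely occurs as a configuration to be excluded, and it is excluded in the paper by semistability, not by genericity. For the same reason the parenthetical claim that the resulting hypersurface singularity is $A_1$ is wrong: the quadratic form of $f'$ restricted to the (non-general) $3$-plane cut by $e'$ and $g'$ can drop to rank $1$, which is why the paper's final proposition has to handle $cD$ points.

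What remains after your reduction — showing that if $E$ is singular along a line or a conic through $P$ then $\cP$ is destabilized — is precisely the content of the paper's proof: a five-case computation (Jacobian rank $0$ or $1$ along a line, with subcases according to which columns vanish; doubled conic with the residual quadric of rank $1$ or $2$), each requiring its own coordinate normalization, a genericity argument in $u$ alone to kill certain coincidences, and an explicitly exhibited effective weight system such as $(1,1,0,1,0)$, $(2,1,0,1,0)$ or $(1,0,0,1,0)$ violating $\mult_\rho(\cP)\le\frac{4}{5}\sum w_i$. Deferring this to ``I would expect the latter to be excluded exactly as in the codimension-$2$ regularity proof'' names the right mechanism but does not carry it out, and the computation here is different from (and longer than) that earlier one. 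So the proposal identifies the correct strategy but does not contain a proof of the lemma.
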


      \begin{proof}
      	By Lemma \ref{SmoothnessOfE_K}, the generic fibre $E_K$ is smooth, and thus the singular locus of $E$ is contained in its central fibre $E_0$. Since $n_i$ ($1\leq i\leq4$) are large enough, in order to calculate singularities on $E$, we may regard the equation defining the elephant $E$ as $x_4=u tx_5$, where $u\in k$ is general. Set $f_E(x_1, x_2, x_3, x_5):=f(x_1, x_2, x_3, utx_5, x_5)$ and $g_E(x_1, x_2, x_3, x_5):=g(x_1, x_2, x_3, utx_5, x_5)$. By Lemma \ref{SemistableModelHasHypersurfSing}, we may assume that the degree $1$ part of $g_E(x_1, x_2, x_3, 1)$ is not zero. On the other hand, since $P\in E$ is singular, we may assume that the order of $f_E(x_1, x_2, x_3, 1)$ is two.
      	
      	Suppose that $E$ is singular along a curve which contains $P$, and thus so is $E_0$. By the degree considerations, the singular locus of $E_0$ is lines or an irreducible conic.
      	
      	\noindent
      	\textbf{Case 1}: $E_0$ is singular along a line.
      	
      	Note that we can choose $\{f,g\}$ such that (\ref{CoeffOfx_ix_5}) holds. By some coordinate changes, we may assume that the singular locus of $E_0$ is given by $\{x_1=x_2=0\}$ without changing the form of the elephant $x_4=utx_5$ and preserving (\ref{CoeffOfx_ix_5}).
      	The Jacobian matrix of $\{\overline{f_E}, \overline{g_E}\}$ on the line is calculated as follows:
      	
      	$$
      	\begin{pmatrix}
      		\frac{\partial \overline{f_E}}{\partial x_1} & \frac{\partial \overline{f_E}}{\partial x_2} & \frac{\partial \overline{f_E}}{\partial x_3} & \frac{\partial \overline{f_E}}{\partial x_5} \vspace{2mm}\\
      		\frac{\partial \overline{g_E}}{\partial x_1} & \frac{\partial \overline{g_E}}{\partial x_2} & \frac{\partial \overline{g_E}}{\partial x_3} & \frac{\partial \overline{g_E}}{\partial x_5}
      	\end{pmatrix}
        =
        \begin{pmatrix}
        	\overline{\lambda_{1, 3}}x_3 & \overline{\lambda_{2, 3}}x_3 & 0 & 0 \\
        	\overline{\mu_{1, 3}}x_3+\overline{\mu_{1, 5}}x_5 & \overline{\mu_{2, 3}}x_3+\overline{\mu_{2, 5}}x_5 & 0 & 0
        \end{pmatrix}
        $$
        (Note that $\overline{\lambda_{1, 5}}=\overline{\lambda_{2, 5}}=0$ by (\ref{CoeffOfx_ix_5})).
      	
      	\noindent
      	Since the central fibre $E_0$ is singular along the line  $\{x_1=x_2=0\}$, the rank of the Jacobian matrix on the line is $0$ or $1$.
      	
      	\noindent
      	\textbf{Case 1-A}: The rank of the Jacobian matrix is $0$.
      	
      	In this case, $\overline{f_E}$ and $\overline{g_E}$ are contained in $k[x_1, x_2]$. For any $a\in k$, the degree $1$ part (with respect to the variables $x_1, x_2, x_3, t$) of $f_E(x_1, x_2, x_3+a, 1)$ is
      	$$a^2\overline{\left(\frac{\lambda_{3,3}}{t}\right)}t+a\overline{\lambda_{3,4}}ut+a\overline{\left(\frac{\lambda_{3,5}}{t}\right)}t+\overline{\lambda_{4,5}}ut+\overline{\left(\frac{\lambda_{5,5}}{t}\right)}t.$$
      	The degree $1$ part (with respect to the variables $x_1, x_2, x_3, t$) of\\ $g_E(x_1, x_2, x_3+a, 1)$ has the same form:
      	$$a^2\overline{\left(\frac{\mu_{3,3}}{t}\right)}t+a\overline{\mu_{3,4}}ut+a\overline{\left(\frac{\mu_{3,5}}{t}\right)}t+\overline{\mu_{4,5}}ut+\overline{\left(\frac{\mu_{5,5}}{t}\right)}t.$$
      	Since the elephant $E$ is singular along $\{x_1=x_2=0\}$, the above linear parts are not linearly independent. In particular, replacing $f$ with a suitable linear combination of $f$ and $g$, we may assume that the following equalities hold:
      	\begin{align}
      		\overline{\left(\frac{\lambda_{3,3}}{t}\right)}=0,\label{case1-A_lambda33}\\
      		u\overline{\lambda_{3,4}}+\overline{\left(\frac{\lambda_{3,5}}{t}\right)}=0, \label{case1-A_lambda3435} \\
      		u\overline{\lambda_{4,5}}+\overline{\left(\frac{\lambda_{5,5}}{t}\right)}=0. \label{case1-A_lambda4555}
      	\end{align}
        If $\lambda_{3,4}$ is not divisible by $t$, the equality (\ref{case1-A_lambda3435}) does not hold for general $u\in k$ and thus Case 1-A does not occur for general $u\in k$. We can apply the same argument to $\lambda_{4,5}$ and (\ref{case1-A_lambda4555}). If both of $\lambda_{3,4}$ and $\lambda_{4,5}$ are divisible by $t$, we have the following:
        $$t^2\mid\lambda_{3,3},\lambda_{3,5},\lambda_{5,5},\quad
        t\mid\lambda_{3,4},\lambda_{4,5}.$$
        Then, for a weight system $\rho=(1,1,0,1,0)$,
        $$\mult_\rho(\cP)\geq2+1=3,$$
        thus $\rho$ destabilizes $\cP$. Hence Case 1-A does not occur for general $u\in k$.
      	
      	\noindent
      	\textbf{Case 1-B}: The rank of the Jacobian matrix is $1$.
      	
      	The number of nonzero columns of the Jacobian matrix is one or two.
      	
      	\noindent
      	\textbf{Case 1-B-a}: The number of nonzero columns is two.
      	
      	Since the rank of Jacobian matrix is $1$, there exists $[\lambda:\mu]\in\bP_k^1$ such that
      	$$\lambda\cdot\overline{\lambda_{i,3}x_3}+\mu\cdot\left(\overline{\mu_{i,3}}x_3+\overline{\mu_{i,5}}x_5\right)=0\quad(i=1,2).$$
      	Applying some coordinate changes and replacing $f$ and $g$ with suitable linear combinations of $f$ and $g$, the pair $\{\overline{f_E},\overline{g_E}\}$ is written in one of the following two forms:
      	$$
      	\begin{cases}
      		\overline{f_E}=\overline{f_E}^{(2)}(x_1,x_2), \\
      		\overline{g_E}=\overline{g_E}^{(2)}(x_1,x_2)+x_1x_3
      	\end{cases}
        \text{or}\quad
        \begin{cases}
        	\overline{f_E}=\overline{f_E}^{(2)}(x_1,x_2), \\
        	\overline{g_E}=\overline{g_E}^{(2)}(x_1,x_2)+x_1x_5.
        \end{cases}
        $$
        Note that $\{\overline{f_E}, \overline{g_E}\}$ can be transform into the above forms without changing the singular locus $\{x_1=x_2=0\}$.
        For any $a\in k$, consider the degree $1$ part of (with respect to $x_1, x_2, x_3, t$) of $g_E(x_1, x_2, x_3+a, 1)$. In either case, the degree $1$ part has a nonzero term which contains $x_i$ ($i=3$ or $5$). Therefore the degree $1$ part of (with respect to $x_1, x_2, x_3, t$) of $f_E(x_1, x_2, x_3+a, 1)$ must be zero. Then, by the same argument as in Case 1-A, Case 1-B-a does not occur, or (\ref{case1-A_lambda33}), (\ref{case1-A_lambda3435}) and (\ref{case1-A_lambda4555}) hold. In the latter case, a weight system $\rho=(1,1,0,1,0)$ destabilizes $\cP$.

      	\noindent
      	\textbf{Case 1-B-b}: The number of nonzero columns in one.
      	
      	We may assume that $\overline{\lambda_{2,3}}=\overline{\mu_{2,3}}=\overline{\mu_{2,5}}=0$. If $\overline{\lambda_{1,3}}=0$ or $\overline{\mu_{1,5}}=0$, the proof of Case 1-B-a works. Thus we assume that $\overline{\lambda_{1,3}}\neq0$ and $\overline{\mu_{1,5}}\neq0$. Then, applying some coordinate changes and replacing $f$ and $g$ with suitable linear combinations of $f$ and $g$, the pair $\{\overline{f_E},\overline{g_E}\}$ is transformed into the following form:
      	$$
      	\begin{cases}
      		\overline{f_E}=\overline{f_E}^{(2)}(x_1,x_2)+x_1x_3, \\
      		\overline{g_E}=\overline{g_E}^{(2)}(x_1,x_2)+x_1x_5.
      	\end{cases}
        $$
        By the coordinate change $x_1\mapsto x_1-\lambda_{3,3}x_3-\lambda_{3,4}x_4-\lambda_{3,5}x_5$, we may assume
        \begin{align}
        	\overline{\left(\frac{\lambda_{3,3}}{t}\right)}=\overline{\lambda_{3,4}}=\overline{\left(\frac{\lambda_{3,5}}{t}\right)}=0. \label{AssumptionofCase1-B-b}
        \end{align}
        The singular loci of $E$ and $E_0$ are not changed, because $\lambda_{3,3}$ and $\lambda_{3,5}$ are originally divisible by $t$, and the elephant is defined by $x_4=utx_5$.
        For any $a\in k$, the degree $1$ part (with respect to $x_1, x_2, x_3, t$) of $f_E(x_1, x_2, x_3+a, 1)$ is
        $$ax_1+a^2\overline{\left(\frac{\lambda_{3,3}}{t}\right)}t+a\overline{\lambda_{3,4}}ut+a\overline{\left(\frac{\lambda_{3,5}}{t}\right)}t+\overline{\lambda_{4,5}}ut+\overline{\left(\frac{\lambda_{5,5}}{t}\right)}t.$$
        The degree $1$ part (with respect to $x_1, x_2, x_3, t$) of $g_E(x_1, x_2, x_3+a, 1)$ is
        $$x_1+a^2\overline{\left(\frac{\mu_{3,3}}{t}\right)}t+a\overline{\mu_{3,4}}ut+a\overline{\left(\frac{\mu_{3,5}}{t}\right)}t+\overline{\mu_{4,5}}ut+\overline{\left(\frac{\mu_{5,5}}{t}\right)}t.$$
        Since they are not linearly independent, we have
        $$\overline{\left(\frac{\mu_{3,3}}{t}\right)}=0,\quad
        \overline{\left(\frac{\lambda_{3,3}}{t}\right)}=\overline{\mu_{3,4}}u+\overline{\left(\frac{\mu_{3,5}}{t}\right)},$$
        \vspace{-3mm}
        $$\overline{\lambda_{3,4}}u+\overline{\left(\frac{\lambda_{3,5}}{t}\right)}=\overline{\mu_{4,5}}u+\overline{\left(\frac{\mu_{5,5}}{t}\right)},\quad
        \overline{\lambda_{4,5}}u+\overline{\left(\frac{\lambda_{5,5}}{t}\right)}=0.$$
        By (\ref{AssumptionofCase1-B-b}), this implies
        $$\overline{\mu_{3,4}}u+\overline{\left(\frac{\mu_{3,5}}{t}\right)}=0,\quad
        \overline{\lambda_{4,5}}u+\overline{\left(\frac{\lambda_{5,5}}{t}\right)}=0.$$
        Thus, Case 1-B-b does not occur for general $u\in k$, or
        $$t^2\mid\lambda_{3,3},\lambda_{3,5},\lambda_{5,5},\mu_{3,3},\mu_{3,5},\mu_{5,5},\quad 
        t\mid\lambda_{3,4},\lambda_{4,5},\mu_{3,4},\mu_{4,5}.$$
        Then, for $\rho=(2,1,0,1,0)$,
        $$\mult_\rho(\cP)\geq2+2=4,$$
        so $\rho$ destabilizes $\cP$.
        
        Hence each case of Case 1 cannot occur for general $u\in k$.

      	\noindent
      	\textbf{Case 2}: $E_0$ is singular along an irreducible conic.
      	
      	Since $E_0$ has degree $4$, $E_0$ must be an irreducible doubled curve, and thus it is contained in a plane. We may assume that the plane is defined by $x_1=0$. Then we can choose $f$ such that $\overline{f_E}=x_1^2$. By some coodinate changes, $\{\overline{f_E},\overline{g_E}\}$ is written as follows, without changing the form of equation $x_4=utx_5$ defining the elephant $E$:
      	$$
      	\begin{cases}
      		\overline{f_E}(x_1, x_2, x_3, x_5) & =x_1^2, \\
      		\overline{g_E}(x_1, x_2, x_3, x_5)& =\overline{g_E}^{(2)}(x_1,x_2)+x_3x_5+x_1(\overline{\mu_{1, 2}}x_2+\overline{\mu_{1, 3}}x_3+\overline{\mu_{1, 5}}x_5).
      	\end{cases}
      	$$
      	The rank of the quadric $\overline{g_E}^{(2)}$ is $1$ or $2$. Since $E_0$ is irreducible, the coefficient of $x^2$ in $\overline{g_E}^{(2)}$ is nonzero. Hence $\{\overline{f_E},\overline{g_E}\}$ can be transformed into either of the following two forms:
      	\begin{align}\raisetag{-9.5mm}
      		\hspace{10mm}
      		\begin{cases}
      			\overline{f_E}(x_1, x_2, x_3, x_5)=x_1^2, \\
      			\overline{g_E}(x_1, x_2, x_3, x_5)= x_2^2+x_3x_5+x_1(\overline{\mu_{1, 2}}x_2+\overline{\mu_{1, 3}}x_3+\overline{\mu_{1, 5}}x_5)
      		\end{cases} \label{Case2rank1}
      	\end{align}
      	or
      	\begin{align}\raisetag{-9.5mm}
      		\hspace{10mm}
      		\begin{cases}
      			\overline{f_E}(x_1, x_2, x_3, x_5)=x_1^2, \\
      			\overline{g_E}(x_1, x_2, x_3, x_5)= x_2^2+x_3^2+x_3x_5+x_1(\overline{\mu_{1, 2}}x_2+\overline{\mu_{1, 3}}x_3+\overline{\mu_{1, 5}}x_5).
      		\end{cases} \label{Case2rank2}
      	\end{align}
      	If $\displaystyle\overline{\left(\frac{\lambda_{2, 2}}{t}\right)}\neq 0$, replacing $f$ by $\displaystyle f-t\left(\frac{\lambda_{2, 2}}{t}\right)g$ we get $f$ such that $\displaystyle \overline{\left(\frac{\lambda_{2, 2}}{t}\right)}=0$.
      	
      	\vspace{2mm}
      	\noindent
      	\textbf{Case 2-A}: $\{\overline{f_E},\overline{g_E}\}$ is transformed into (\ref{Case2rank1}).
      	
      	For any $a\in k$, the degree $1$ (with respect to $x_1, x_2, x_3, t$) part of a polynomial $f_E(x_1, x_2+a, x_3-a^2, 1)$ is
      	\begin{equation}
      		\begin{split}
      			& \Big{\{}a^2\overline{\left(\frac{\lambda_{2, 2}}{t}\right)}-a^3\overline{\left(\frac{\lambda_{2, 3}}{t}\right)}+a\overline{\lambda_{2, 4}}u+a\overline{\left(\frac{\lambda_{2, 5}}{t}\right)} \\
      			& +a^4\overline{\left(\frac{\lambda_{3, 3}}{t}\right)}-a^2\overline{\lambda_{3, 4}}u-a^2\overline{\left(\frac{\lambda_{3, 5}}{t}\right)}+\overline{\lambda_{4, 5}}u+\overline{\left(\frac{\lambda_{5, 5}}{t}\right)}
      			\Big{\}}t. \label{Case2-A_deg1ofConic}
      		\end{split}
      	\end{equation}
        On the other hand, one can check that the coefficient of $x_2$ in the degree $1$ (with respect to $x_1, x_2, x_3, t$) part of a polynomial $g_E(x_1, x_2+a, x_3-a^2, 1)$ is equal to $2a$.
        Since these degree $1$ parts are not linearly independent, \eqref{Case2-A_deg1ofConic} must be zero. Thus we obtain
        \begin{align*}
        	\overline{\left(\frac{\lambda_{3, 3}}{t}\right)}=0,\quad \overline{\left(\frac{\lambda_{2, 3}}{t}\right)}=0, \\
        	u\overline{\lambda_{3, 4}}+\overline{\left(\frac{\lambda_{3, 5}}{t}\right)}=0,
        	\\
        	u\overline{\lambda_{2, 4}}+\overline{\left(\frac{\lambda_{2, 5}}{t}\right)}=0,
        	\\
        	u\overline{\lambda_{4, 5}}+\overline{\left(\frac{\lambda_{5, 5}}{t}\right)}=0.
        \end{align*}
        Note that we assumed $\overline{(\lambda_{2, 2}/t)}=0$.
        By the same argument as in Case 1, we may assume that $t\mid\lambda_{i, 4}$ and $t^2\mid\lambda_{i, 5}$ for $i=2, 3, 5$.
        Then, for a weight system $\rho=(1, 0, 0, 1, 0)$, we have $$\mult_\rho(\cP)\geq 2+0=2.$$
        Thus $\rho$ destabilizes $\cP$.
        
        \noindent
        \textbf{Case 2-B}: $\{\overline{f_E},\overline{g_E}\}$ is transformed into (\ref{Case2rank2}).
        
        For any $a\in k$, the degree $1$ (with respect to $x_1, x_2, x_3, t$) part of a polynomial $\displaystyle f_E(x_1, x_2+\sqrt{-1}\sqrt{a^2+a}, x_3+a, 1)$ is
        \begin{equation}
        	\begin{split}
        		& \Big{\{}-\left(a^2+a\right)\overline{\left(\frac{\lambda_{2, 2}}{t}\right)}+\sqrt{-1}\cdot a\sqrt{a^2+a}\overline{\left(\frac{\lambda_{2, 3}}{t}\right)} \\
        		& -\sqrt{-1}\sqrt{a^2+a}\,\overline{\lambda_{2, 4}}u-\sqrt{-1}\sqrt{a^2+a}\overline{\left(\frac{\lambda_{2, 5}}{t}\right)} \\
        		& +a^2\overline{\left(\frac{\lambda_{3, 3}}{t}\right)}-a\overline{\lambda_{3, 4}}u-a\overline{\left(\frac{\lambda_{3, 5}}{t}\right)}+\overline{\lambda_{4, 5}}u+\overline{\left(\frac{\lambda_{5, 5}}{t}\right)}
        		\Big{\}}t. \label{Case2-B_deg1ofConic}
        	\end{split}
        \end{equation}
        Since the coefficient of $x_2$ in the degree $1$ (with respect to $x_1, x_2, x_3, t$) part of a polynomial $\displaystyle g_E(x_1, x_2-\sqrt{-1}\sqrt{a^2+a}, x_3+a, 1)$ is equal to $\displaystyle-2\sqrt{-1}\sqrt{a^2+a}$, (\ref{Case2-B_deg1ofConic}) must be zero. Therefore we have
        \begin{align}
        	\overline{\left(\frac{\lambda_{2, 2}}{t}\right)}-\overline{\left(\frac{\lambda_{3, 3}}{t}\right)}=0,\quad\overline{\left(\frac{\lambda_{2, 3}}{t}\right)}=0, \\
        	\overline{\lambda_{2, 4}}u+\overline{\left(\frac{\lambda_{2, 5}}{t}\right)}=0, \\
        	\overline{\left(\frac{\lambda_{2, 2}}{t}\right)}+\overline{\lambda_{3, 4}}u+\overline{\left(\frac{\lambda_{3, 5}}{t}\right)}=0, \\
        	\overline{\lambda_{4, 5}}u+\overline{\left(\frac{\lambda_{5, 5}}{t}\right)}=0.
        \end{align}
        Since we already have $\overline{\left(\lambda_{2, 2}/t\right)}=0$, we may assume that $t\mid\lambda_{i, 4}$ and $t^2\mid\lambda_{i, 5}$ for $i=2, 3, 5$. For $\rho=(1, 0, 0, 1, 0)$, we have
        $$\mult_\rho(\cP)\geq 2+0=2,$$
        so $\rho$ destabilizes $\cP$.
      \end{proof}

    \subsection{Calculation of singularities}
    
    We show that the semistable threefolds have only terminal singularities by concrete calculation.
    
    \begin{prop}
    	Let $X$ be a semistable model of degree $4$ del Pezzo fibrations. Then $X$ has only singularities of type $cA$ or $cD$. In particular, $X$ has only terminal singularities.
    \end{prop}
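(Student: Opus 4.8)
The plan is to work locally at a singular point $P\in X$ (here $n=5$, so $X\subset\bP^4_R$) and to identify the analytic type of $P\in X$ by cutting with a general elephant, as produced by Lemma~\ref{IsoSingOfElephant}. After acting by an element of $GL_5(k)$, which preserves semistability, I may assume $P=[0:0:0:0:1]$; then, as in the discussion preceding Lemma~\ref{IsoSingOfElephant}, Lemma~\ref{SemistableModelHasHypersurfSing} lets me choose a basis $\{f,g\}$ of $\cP$ so that on the chart $x_5=1$ the polynomial $g(x_1,\ldots,x_4,1)$ has a nonzero linear part while $f(x_1,\ldots,x_4,1)\in(x_1,\ldots,x_4,t)^2$, and in particular (\ref{CoeffOfx_ix_5}) holds. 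Since $X$ has only isolated singularities (proved above for $n=5$), eliminating one variable by solving $g=0$ presents $\widehat{\cO}_{X,P}\cong k[[x_1,x_2,x_3,t]]/(F)$ with $F$ defining an isolated hypersurface singularity; moreover flatness of $\pi$ forces $\overline f\neq 0$, hence $f(x_1,\ldots,x_4,1)\notin(x_1,\ldots,x_4,t)^3$, so $\mult_P F=2$.

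Next I would take a general elephant $E$ of the shape in Lemma~\ref{IsoSingOfElephant}; by that lemma $\Sing E$ is isolated at $P$, and by Lemma~\ref{SmoothnessOfE_K} the generic fibre $E_K$ is smooth, so $E$ is a normal surface (it is $S_2$ as a Cartier divisor on the Cohen--Macaulay $X$ and $R_1$ because $\Sing E$ is finite). Cutting $\widehat{\cO}_{X,P}$ by the general hyperplane that defines $E$ yields $\widehat{\cO}_{E,P}\cong k[[y,z,w]]/(\phi)$ with $\mult_P\phi=2$ and isolated singular locus; completing the square, $\phi=y^2+G(z,w)$ with $\mult G\geq 2$. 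If the quadratic part of $\phi$ has rank $\geq 2$ (equivalently $\mult G=2$), then $\phi=0$ is du Val of type $A$, so $F=0$ is a $cA$ singularity and $P\in X$ is of type $cA$; being isolated, it is terminal.

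It remains to treat $\mult G\geq 3$, and here is where essentially all the work sits. I claim semistability forces $\mult G=3$ and forces the cubic leading form of $G$ to have at least two distinct linear factors; granting this, $\phi=0$ is du Val of type $D$, $F=0$ is a $cD$ singularity, $P\in X$ is of type $cD$, and again it is terminal. To prove the claim one argues by contradiction as in the proof of Lemma~\ref{IsoSingOfElephant}: substituting $x_4=utx_5$ (legitimate because the remaining terms of the elephant have arbitrarily large $t$-order) one gets $f_E,g_E\in R[x_1,x_2,x_3]$, and the hypothesis that the du Val type at $P$ is $E_6$ or worse --- i.e. that the leading form of $G$ is a perfect cube, or that $\mult G\geq 4$ --- translates, through the non-independence of the relevant degree-one parts of $f_E$ and $g_E$ along the singular locus, into a package of divisibilities of the form $t\mid\lambda_{i,j},\mu_{i,j}$ and $t^2\mid\lambda_{i,j},\mu_{i,j}$ for suitable indices; an effective weight system of the type $(1,1,0,1,0)$, $(2,1,0,1,0)$, or $(1,0,0,1,0)$ then violates (\ref{StabConditionOf(2,2)}). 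The main obstacle is thus organizational rather than conceptual: one must enumerate the finitely many normal forms of the pair (quadratic part, cubic part) of $\phi$ to be excluded and, in each, exhibit the matching destabilizing weight --- the same bookkeeping as in Lemma~\ref{IsoSingOfElephant}, now carried one degree higher.

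Combining the two cases, every singular point of $X$ is a compound du Val point of type $cA$ or $cD$; alternatively, since we have exhibited a general elephant $E\ni P$ with $P\in E$ an isolated du Val point, Theorem~\ref{CanonicalElephant} gives directly that $P\in X$ is terminal. Hence $X$ has only terminal singularities, of type $cA$ or $cD$.
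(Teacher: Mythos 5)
Your overall strategy is the same as the paper's: put the singular point at $P=[0:0:0:0:1]$, use Lemma~\ref{SemistableModelHasHypersurfSing} to present $P\in X$ as a hypersurface double point, cut with a general elephant of the shape in Lemma~\ref{IsoSingOfElephant}, and feed the resulting isolated du Val point into Theorem~\ref{CanonicalElephant}. The easy half --- quadratic part of rank $\geq 2$ on the elephant gives type $A$, hence $cA$ --- is fine. But the entire substance of the proposition lies in the sentence you label a ``claim'': that when the quadratic rank drops, semistability forces the cubic leading form of $G$ to have at least two distinct factors, i.e.\ rules out type $E$ and worse. You do not prove this, and the sketch you give for it is not adequate. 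The weight systems you cite --- $(1,1,0,1,0)$, $(2,1,0,1,0)$, $(1,0,0,1,0)$ --- are the ones used in Lemma~\ref{IsoSingOfElephant} to prove isolatedness of $\Sing E$; the destabilizing weights actually needed to exclude the degenerate cubics are different (the paper uses $(1,0,1,0,0)$ and $(1,1,1,0,0)$), so one cannot simply ``carry the same bookkeeping one degree higher.''

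Moreover the analysis is not a routine enumeration of normal forms of the pair (quadratic part, cubic part) of $\phi$ against a generic elephant. The delicate case is $\rank\overline{f}^{(2)}=3$ with $g^{(1)}\in(x_1,x_2,x_3,t)\setminus(t)$, say $\overline{g}^{(1)}=x_1$: there one must take the elephant \emph{adapted to $g$}, namely $x_1=u^{-1}tx_5$, solve $g=0$ for $x_1$ as a power series $\hat{x_1}(x_2,x_3,x_4)$ of order $2$, substitute $t=u\hat{x_1}$ into $f$, and read off the degree-$3$ part of $\hat{f_E}$; semistability (via the weight $(1,0,1,0,0)$) is what guarantees that at least one of $\overline{\mu_{2,2}},\overline{\mu_{2,4}},\overline{\mu_{4,4}}$ is nonzero, which in turn keeps the cubic from collapsing. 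Even then one sub-case ($\overline{\mu_{2,4}}=\overline{\mu_{4,4}}=0$ with $\overline{\lambda_{4,5}^{(1)}}=0$) survives and requires switching to a \emph{second} elephant $x_2=utx_5$ and redoing the whole computation, with $(1,1,1,0,0)$ finishing the job. None of this is visible from your outline, and since the proposition \emph{is} this computation, the proposal has a genuine gap at its core.
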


      \begin{proof}
      	Let $P=[0:0:0:0:1]\in X$ be a singular point. By Theorem \ref{CanonicalElephant} and Lemma \ref{IsoSingOfElephant}, it is sufficient to show that there exists an elephant $E$ which passes through $P$, and $P$ is du Val singularity as a point on $E$. We work on the affine chart $x_5=1$.
      	
      	Let $e\in R[x_1,\ldots, x_5]$ be an equation which defines an elephant $E$ and $e^{(1)}$ be the degree $1$ (with respect to variables $x_1, x_2, x_3, x_4, t)$ part of $e(x_1,\ldots, x_4, 1)$. By Lemma \ref{SmoothnessOfE_K}, we can pick $e$ such that the form of the linear part $e^{(1)}$ is desired one, and generic fibre $E_K$ is smooth. 
      	
      	Let $g^{(1)}$ be the degree $1$ (with respect to variables $x_1, x_2, x_3, x_4, t)$ part of $g(x_1,\ldots, x_4, 1)$. By the Lemma \ref{SemistableModelHasHypersurfSing}, we may assume that $g^{(1)}\neq 0$. Let $\overline{f}^{(2)}$ be the quadratic part of $\overline{f}(x_1,\ldots, x_4, 1)$. By Lemma \ref{IdealHasNoQuadricWithRank1or2}, $\rank\overline{f}^{(2)}$ is $3$ or $4$.

      	Suppose that $\rank \overline{f}^{(2)}=4$.
      	If $g^{(1)}\notin (t)$, in some coordinates, we can write $g^{(1)}=x_4$ or $g^{(1)}=x_4+t$. In this case, we choose $e$ such that $e^{(1)}=g^{(1)}+ut$, where $u\in k$ is general. By Lemma \ref{IsoSingOfElephant}, we may assume that $P\in E$ is an isolated singularity. Then $P$ is a $cA$-type singularity.
      	
      	If $g^{(1)}\in (t)$, we choose $e$ such that $e^{(1)}=x_4+ut$ where $u\in k$ is general. Then $P$ is a $cA$-type singularity.

      	Next, assume that $\rank\overline{f}^{(2)}=3$.
      	Applying some coordinate changes, we may assume that $\overline{f}^{(2)}\in k[x_1, x_2, x_3]$ without changing the singular point $P=[0:0:0:0:1]$.
      	
      	If $g^{(1)}\notin (x_1, x_2, x_3, t)$, in some coordinates, we may write $g^{(1)}=x_4$ or $g^{(1)}=x_4+t$. In this case, we choose $e$ such that $e^{(1)}=g^{(1)}+ut$, where $u\in k$ is general element. Then $P$ is a $cA$-type singularity.
      	
      	If $g^{(1)}\in (t)$, we choose $e$ such that $e^{(1)}=x_4+ut$ where $u\in k$ is general. Then $P$ is a $cA$-type singularity.
      	
      	Assume that $\rank \overline{f}^{(2)}=3$ and $g^{(1)}\in (x_1, x_2, x_3, t)$ but $g^{(1)}\notin (t)$. Applying some coordinate changes, we may assume that $\overline{g}^{(1)}=x_1$.
      	
      	Firstly, consider an elephant $E$ defined by $x_1=u^{-1} tx_5$, where $u\in k$ is general, and the generic fibre $E_K$ is smooth. In the elephant $E$, we have the relations given by $g$ and $x_1=u^{-1} t$. Therefore we can write $x_1$ and $t$ as formal power series with variables $x_2, x_3, x_4$. Set $x_1=\hat{x_1}(x_2, x_3, x_4)$. Note that $\hat{x_1}(x_2, x_3, x_4)$ has order $\geq2$ and, under these relations, $t=u\hat{x_1}(x_2, x_3, x_4)$. On the other hand, in the central fibre, we have another formal power series $x_1=\hat{\overline{x_1}}(x_2, x_3, x_4)\in k[[x_2, x_3, x_4]]$, which is obtained by the relation $\overline{g}=0$.
      	
      	Substituting $x_1=\hat{x_1}$ and $t=u\hat{x_1}$ in the equation $f$, we get a formal power series $\hat{f_E}\in k[[x_2, x_3, x_4]]$, which defines hypersurface singularity at $P=[0:0:0:0:1]$ as a point on $E$.
      	
      	If the rank of the degree $2$ part of $\hat{f_E}$ is more than $2$, then $P$ is $cA$-type singularity. Thus we assume that the rank of degree $2$ part of $\hat{f_E}$ is equal to $1$ (Note that it cannot be zero since $\rank q=3$). In this case, the rank of degree $2$ part of $\overline{f}(\hat{\overline{x_1}}, x_2, x_3, x_4)$ also must be $1$, because the order of each $\hat{x_1}, u\hat{x_1}(=t), \hat{\overline{x_1}}$ is $2$ thus the degree $2$ part of $\hat{f_E}$ and $\overline{f}(\hat{\overline{x_1}}, x_2, x_3, x_4)$ come from only the quadratic part of $f$ which do not contain $x_1$ and $t$. Therefore, by some coordinate changes, $\overline{f}$ is written in the following form:
      	$$\overline{f}=x_1x_2+x_3^2.$$
      	Replacing $g$ by $g-\mu_{3, 3}f$, we may assume that $\mu_{3, 3}=0$.
      	
      	We calculate the terms of $\hat{f_E}$ with degree $\leq 3$ with respect to the variables $x_2, x_3, x_4$. Such terms come from only the following terms in $f$:
      	\begin{align*}
      		x_3^2+\left\{x_1+\overline{\left(\frac{\lambda_{2, 5}}{t}\right)}t\right\}x_2+\overline{\left(\frac{\lambda_{3, 5}}{t}\right)}tx_3+\overline{\left(\frac{\lambda_{4, 5}}{t}\right)}tx_4.
      	\end{align*}
      	Since there is a term $x_3^2$, we may eliminate the term $\overline{\left(\lambda_{3, 5}/t\right)}tx_3$. Then the degree $\leq 3$ part of $\hat{f_E}$ is not changed, because the order of $t=u\hat{x_1}(x_2, x_3, x_4)$ is $2$. Substituting $t=u x_1$, the terms which we have to see become the following:
      	\begin{align}\label{deg2and3partOf_g_when_x_1=xi_t}
      		x_3^2+\left\{1+u\overline{\left(\frac{\lambda_{2, 5}}{t}\right)}\right\}x_1x_2+u\overline{\left(\frac{\lambda_{4, 5}}{t}\right)}x_1x_4.
      	\end{align}
      	
      	To see the degree $\leq 3$ part of $\hat{f_E}$, we substitute $x_1=\hat{x_1}(x_2, x_3, x_4)$ in the above terms. Then we only have to know about the degree $2$ part of $\hat{x_1}(x_2, x_3, x_4)$ which do not have $x_3$. (Note that, by the coordinate change $x_3\mapsto(\text{unit})\cdot x_3$, we can eliminate the terms which contain $x_3^2$.) Such terms come from only the degree $\leq 2$ part of $g$:
      	$$x_1+\overline{\mu_{2, 2}}x_2^2+\overline{\mu_{2, 4}}x_2x_4+\overline{\mu_{4, 4}}x_4^2+\overline{\left(\frac{\mu_{5, 5}}{t}\right)}t.$$
      	Substituting $t=ux_1$, the above terms become
      	$$\left\{1+u\overline{\left(\frac{\mu_{5, 5}}{t}\right)}\right\}x_1+\overline{\mu_{2, 2}}x_2^2+\overline{\mu_{2, 4}}x_2x_4+\overline{\mu_{4, 4}}x_4^2.$$
      	Since $u\in k$ is general, we may assume that $u\overline{\left(\mu_{5, 5}/t\right)}\neq -1$. Thus the degree $2$ part of $\hat{x_1}(x_2, x_3, x_4)$ without $x_3$ is
      	$$u'\left(\overline{\mu_{2, 2}}x_2^2+\overline{\mu_{2, 4}}x_2x_4+\overline{\mu_{4, 4}}x_4^2\right),$$
      	where $u'=-(1+u\overline{\left(\mu_{5, 5}/t\right)})^{-1}$.
      	
      	Assume that $\overline{\mu_{2, 2}}=\overline{\mu_{2, 4}}=\overline{\mu_{4, 4}}=0$. Then, for $\rho=(1, 0, 1, 0, 0)\in G(K)$,
      	$$\mult_\rho(\cP)\geq 1+1=2.$$
      	Thus $\rho$ destabilizes $\cP$. Therefore at least one of $\overline{\mu_{2, 2}}, \overline{\mu_{2, 4}}, \overline{\mu_{4, 4}}$ is nonzero.
      	
      	Substituting $x_1=\hat{x_1}(x_2, x_3, x_4)$ in the degree $\leq 3$ part (\ref{deg2and3partOf_g_when_x_1=xi_t}) of $f$, we get
      	\begin{align*}
      		& & x_3^2+\left\{1+u\overline{\left(\frac{\lambda_{2, 5}}{t}\right)}\right\}\left\{u'\left(\overline{\mu_{2, 2}}x_2^2+\overline{\mu_{2, 4}}x_2x_4+\overline{\mu_{4, 4}}x_4^2\right)\right\}x_2 \\
      		& & +u\overline{\left(\frac{\lambda_{4, 5}}{t}\right)}\left\{u'\left(\overline{\mu_{2, 2}}x_2^2+\overline{\mu_{2, 4}}x_2x_4+\overline{\mu_{4, 4}}x_4^2\right)\right\}x_4 \\
      		& = & x_3^2+\left\{1+u\overline{\left(\frac{\lambda_{2, 5}}{t}\right)}\right\}u'\overline{\mu_{2, 2}}x_2^3 \\
      		& & +\left\{(1+u\overline{\left(\frac{\lambda_{2, 5}}{t}\right)})u'\overline{\mu_{2, 4}}+u\overline{\left(\frac{\lambda_{4, 5}}{t}\right)}u'\overline{\mu_{2, 2}}\right\}x_2^2x_4 \\
      		& & +\left\{(1+u\overline{\left(\frac{\lambda_{2, 5}}{t}\right)})u'\overline{\mu_{4, 4}}+u\overline{\left(\frac{\lambda_{4, 5}}{t}\right)}u'\overline{\mu_{2, 4}}\right\}x_2x_4^2 \\
      		& & +u\overline{\left(\frac{\lambda_{4, 5}}{t}\right)}u'\overline{\mu_{4, 4}}x_4^3.
      	\end{align*}
        If $\displaystyle\overline{\left(\frac{\lambda_{4, 5}}{t}\right)}\neq 0$, for general $u\in k$, the degree $3$ part in the above terms is nonzero and not a cube. thus $P$ is $cD$-type singularity. Hence we may assume that $\displaystyle\overline{\left(\frac{\lambda_{4, 5}}{t}\right)}= 0$. In this case, the degree $\leq 3$ part (\ref{deg2and3partOf_g_when_x_1=xi_t}) of $f$ is
        \begin{align*}
        	x_3^2+\left\{1+u\overline{\left(\frac{\lambda_{2, 5}}{t}\right)}\right\}u'\overline{\mu_{2, 2}}x_2^3 \\
        	+\left\{1+u\overline{\left(\frac{\lambda_{2, 5}}{t}\right)}\right\}u'\overline{\mu_{2, 4}}x_2^2x_4+\left\{1+u\overline{\left(\frac{\lambda_{2, 5}}{t}\right)}\right\}u'\overline{\mu_{4, 4}}x_2x_4^2.
        \end{align*}
        If either $\overline{\mu_{2, 4}}$ or $\overline{\mu_{4, 4}}$ is nonzero, then $P$ is a $cD$-type singularity. Thus we assume that $\overline{\mu_{2, 4}}=\overline{\mu_{4, 4}}=0$. Then $\overline{\mu_{2, 2}}$ is nonzero.
        
        Now take another elephant $E$ defined by $x_2=utx_5$ where $u\in k^\times$ is general. We calculate $\hat{x_1}$ and the low degree part of $f_E$ again. Note that, in this setting, we cannot ignore terms in $f$ and $\hat{x_1}(x_2, x_3, x_4)$ which have the variable $x_3$.
        
        Let $\overline{\lambda_{i, 5}^{(j)}}$ ($i=1,2,\ldots, 5$) be the element of $k$ such that $\lambda_{i, 5}=\sum_{j=0}^\infty\overline{\lambda_{i, 5}^{(j)}}t^j$ in $\hat{R}=k[[t]]$. The degree $\leq 3$ part of $f$ is
        \begin{align*}
        	\overline{\left(\frac{\lambda_{1, 1}}{t}\right)}tx_1^2+x_1x_2+\overline{\left(\frac{\lambda_{1, 3}}{t}\right)}tx_1x_3+\overline{\left(\frac{\lambda_{1, 4}}{t}\right)}tx_1x_4+\overline{\lambda_{1, 5}^{(1)}}tx_1+\overline{\lambda_{1, 5}^{(2)}}t^2x_1 \\
        	+\overline{\left(\frac{\lambda_{2, 2}}{t}\right)}tx_2^2+\overline{\left(\frac{\lambda_{2, 3}}{t}\right)}tx_2x_3+\overline{\left(\frac{\lambda_{2, 4}}{t}\right)}tx_2x_4+\overline{\lambda_{2, 5}^{(1)}}tx_2+\overline{\lambda_{2, 5}^{(2)}}t^2x_2 \\
        	+x_3^2+\overline{\left(\frac{\lambda_{3, 4}}{t}\right)}tx_3x_4+\overline{\lambda_{3, 5}^{(1)}}tx_3+\overline{\lambda_{3, 5}^{(2)}}t^2x_3+\overline{\left(\frac{\lambda_{4, 4}}{t}\right)}tx_4^2+\overline{\lambda_{4, 5}^{(1)}}tx_4+\overline{\lambda_{4, 5}^{(2)}}t^2x_4 \\
        	+\overline{\lambda_{5, 5}^{(2)}}t^2+\overline{\lambda_{5, 5}^{(3)}}t^3.
        \end{align*}
        Substituting $t=ux_2$ in the above, we obtain
        \begin{align*}
        	& \overline{\left(\frac{\lambda_{1, 1}}{t}\right)}(ux_2)x_1^2+x_1x_2+\overline{\left(\frac{\lambda_{1, 3}}{t}\right)}(ux_2)x_1x_3+\overline{\left(\frac{\lambda_{1, 4}}{t}\right)}(ux_2)x_1x_4 \\
        	& +\overline{\lambda_{1, 5}^{(1)}}(ux_2)x_1+\overline{\lambda_{1, 5}^{(2)}}(ux_2)^2x_1 \\
        	& +\overline{\left(\frac{\lambda_{2, 2}}{t}\right)}(ux_2)x_2^2+\overline{\left(\frac{\lambda_{2, 3}}{t}\right)}(ux_2)x_2x_3+\overline{\left(\frac{\lambda_{2, 4}}{t}\right)}(ux_2)x_2x_4 \\
        	& +\overline{\lambda_{2, 5}^{(1)}}(ux_2)x_2+\overline{\lambda_{2, 5}^{(2)}}(ux_2)^2x_2 \\
        	& +x_3^2+\overline{\left(\frac{\lambda_{3, 4}}{t}\right)}(ux_2)x_3x_4+\overline{\lambda_{3, 5}^{(1)}}(ux_2)x_3+\overline{\lambda_{3, 5}^{(2)}}(ux_2)^2x_3 \\
        	& +\overline{\left(\frac{\lambda_{4, 4}}{t}\right)}(ux_2)x_4^2+\overline{\lambda_{4, 5}^{(1)}}(ux_2)x_4+\overline{\lambda_{4, 5}^{(2)}}(ux_2)^2x_4 \\
        	& +\overline{\lambda_{5, 5}^{(2)}}(ux_2)^2+\overline{\lambda_{5, 5}^{(3)}}(ux_2)^3 \\
        	= & \left(1+\overline{\lambda_{1, 5}^{(1)}}u\right)x_1x_2
        	+\left(\overline{\lambda_{2, 5}^{(1)}}+\overline{\lambda_{5, 5}^{(2)}}u\right)ux_2^2
        	+\overline{\lambda_{3, 5}^{(1)}}ux_2x_3+x_3^2+\overline{\lambda_{4, 5}^{(1)}}ux_2x_4 \\
        	& +\overline{\left(\frac{\lambda_{1, 1}}{t}\right)}ux_1^2x_2
        	+\overline{\lambda_{1, 5}^{(2)}}u^2x_1x_2^2
        	+\overline{\left(\frac{\lambda_{1, 3}}{t}\right)}ux_1x_2x_3
        	+\overline{\left(\frac{\lambda_{1, 4}}{t}\right)}ux_1x_2x_4 \\
        	& +\left\{\overline{\left(\frac{\lambda_{2, 2}}{t}\right)}+\overline{\lambda_{2, 5}^{(2)}}u+\overline{\lambda_{5, 5}^{(3)}}u^2\right\}ux_2^3
        	+\left\{\overline{\left(\frac{\lambda_{2, 3}}{t}\right)}+\overline{\lambda_{3, 5}^{(2)}}u\right\}ux_2^2x_3 \\
        	& +\left\{\overline{\left(\frac{\lambda_{2, 4}}{t}\right)}+\overline{\lambda_{4, 5}^{(2)}}u\right\}ux_2^2x_4
        	+\overline{\left(\frac{\lambda_{3, 4}}{t}\right)}ux_2x_3x_4
        	+\overline{\left(\frac{\lambda_{4, 4}}{t}\right)}ux_2x_4^2.
        \end{align*}
    
        Since we assumed that $\overline{\mu_{2, 4}}=\overline{\mu_{4, 4}}=0$, the degree $\leq 2$ part of $g$ is
        \begin{align*}
        	\overline{\mu_{1, 1}}x_1^2+\overline{\mu_{1, 2}}x_1x_2+\overline{\mu_{1, 3}}x_1x_3+\overline{\mu_{1, 4}}x_1x_4+x_1 \\
        	+\overline{\mu_{2, 2}}x_2^2+\overline{\mu_{2, 3}}x_2x_3+\overline{\left(\frac{\mu_{2, 5}}{t}\right)}tx_2+\overline{\mu_{3, 4}}x_3x_4+
        	\overline{\left(\frac{\mu_{3, 5}}{t}\right)}tx_3 \\
        	+\overline{\left(\frac{\mu_{4, 5}}{t}\right)}tx_4+\overline{\mu_{5, 5}^{(1)}}t+\overline{\mu_{5, 5}^{(2)}}t^2.
        \end{align*}
        Therefore the degree $\leq 2$ part of $\hat{x_1}(x_2, x_3, x_4)$ is
        \begin{align*}
        	& -\{
        	\overline{\mu_{1, 1}}(-\overline{\mu_{5, 5}^{(1)}}ux_2)^2+\overline{\mu_{1, 2}}(-\overline{\mu_{5, 5}^{(1)}}ux_2)x_2+\overline{\mu_{1, 3}}(-\overline{\mu_{5, 5}^{(1)}}ux_2)x_3+\overline{\mu_{1, 4}}(-\overline{\mu_{5, 5}^{(1)}}ux_2)x_4 \\
        	& +\overline{\mu_{2, 2}}x_2^2+\overline{\mu_{2, 3}}x_2x_3+\overline{\left(\frac{\mu_{2, 5}}{t}\right)}ux_2^2+\overline{\mu_{3, 4}}x_3x_4+
        	\overline{\left(\frac{\mu_{3, 5}}{t}\right)}ux_2x_3 \\
        	 &+\overline{\left(\frac{\mu_{4, 5}}{t}\right)}ux_2x_4+\overline{\mu_{5, 5}^{(1)}}ux_2+\overline{\mu_{5, 5}^{(2)}}u^2 x_2^2
        	\} \\
        	= & -\overline{\mu_{5, 5}^{(1)}}ux_2
        	+\left\{-\overline{\mu_{1, 1}}\left(\overline{\mu_{5, 5^{(1)}}}\right)^2u^2+\overline{\mu_{1, 2}}\,\overline{\mu_{5, 5^{(1)}}}u-\overline{\mu_{2, 2}}-\overline{\left(\frac{\mu_{2, 5}}{t}\right)}u-\overline{\mu_{5, 5^{(2)}}}u^2\right\}x_2^2 \\
        	& +\left\{\overline{\mu_{1, 3}}\,\overline{\mu_{5, 5^{(1)}}}u-\overline{\mu_{2, 3}}-\overline{\left(\frac{\mu_{3, 5}}{t}\right)}u\right\}x_2x_3
        	+\left\{\overline{\mu_{1, 4}}\,\overline{\mu_{5, 5^{(1)}}}-\overline{\left(\frac{\mu_{4, 5}}{t}\right)}\right\}u x_2x_4
        	-\overline{\mu_{3, 4}}x_3x_4
        \end{align*}
        Substituting this in the degree $\leq 3$ part of $f$, we get the degree $\leq3$ part of $\hat{f_E}(x_2, x_3, x_4)$. It is written as follows:
        \begin{align*}
        	& -\left(1+\overline{\lambda_{1, 5}^{(1)}}u\right)\overline{\mu_{5, 5}^{(1)}}ux_2^2
        	+\left(\overline{\lambda_{2, 5}^{(1)}}+\overline{\lambda_{5, 5}^{(2)}}u\right)ux_2^2
        	+\overline{\lambda_{3, 5}^{(1)}}ux_2x_3
        	+x_3^2+\overline{\lambda_{4, 5}^{(1)}}ux_2x_4 \\
        	& +\left(1+\overline{\lambda_{1, 5}^{(1)}}u\right)\left\{-\overline{\mu_{1, 1}}\left(\overline{\mu_{5, 5^{(1)}}}\right)^2u^2+\overline{\mu_{1, 2}}\,\overline{\mu_{5, 5^{(1)}}}u-\overline{\mu_{2, 2}}-\overline{\left(\frac{\mu_{2, 5}}{t}\right)}u-\overline{\mu_{5, 5^{(2)}}}u^2\right\}x_2^3 \\
        	& +\left(1+\overline{\lambda_{1, 5}^{(1)}}u\right)\left\{\overline{\mu_{1, 3}}\,\overline{\mu_{5, 5^{(1)}}}u-\overline{\mu_{2, 3}}-\overline{\left(\frac{\mu_{3, 5}}{t}\right)}u\right\}x_2^2x_3 \\
        	& +\left(1+\overline{\lambda_{1, 5}^{(1)}}u\right)\left\{\overline{\mu_{1, 4}}\,\overline{\mu_{5, 5^{(1)}}}-\overline{\left(\frac{\mu_{4, 5}}{t}\right)}\right\}ux_2^2x_4
        	-\left(1+\overline{\lambda_{1, 5}^{(1)}}u\right)\overline{\mu_{3, 4}}x_2x_3x_4 \\
        	& +\overline{\left(\frac{\lambda_{1, 1}}{t}\right)}\left(\overline{\mu_{5, 5}^{(1)}}\right)^2u^3x_2^3
        	-\overline{\lambda_{1, 5}^{(2)}}\,\overline{\mu_{5, 5}^{(1)}}u^3x_2^3
        	-\overline{\left(\frac{\lambda_{1, 3}}{t}\right)}\overline{\mu_{5, 5}^{(1)}}u^2x_2^2x_3
        	-\overline{\left(\frac{\lambda_{1, 4}}{t}\right)}\overline{\mu_{5, 5}^{(1)}}u^2x_2^2x_4 \\
        	& +\left\{\overline{\left(\frac{\lambda_{2, 2}}{t}\right)}+\overline{\lambda_{2, 5}^{(2)}}u+\overline{\lambda_{5, 5}^{(3)}}u^2\right\}ux_2^3
        	+\left\{\overline{\left(\frac{\lambda_{2, 3}}{t}\right)}+\overline{\lambda_{3, 5}^{(2)}}u\right\}ux_2^2x_3 \\
        	& +\left\{\overline{\left(\frac{\lambda_{2, 4}}{t}\right)}+\overline{\lambda_{4, 5}^{(2)}}u\right\}ux_2^2x_4
        	+\overline{\left(\frac{\lambda_{3, 4}}{t}\right)}ux_2x_3x_4
        	+\overline{\left(\frac{\lambda_{4, 4}}{t}\right)}ux_2x_4^2.
        \end{align*}
        We have to eliminate the terms which have the variable $x_3$. Namely, they are written as follows:
        \begin{align*}
        	\overline{\lambda_{3, 5}^{(1)}}ux_2x_3+\left(1+\overline{\lambda_{1, 5}^{(1)}}u\right)\left\{\overline{\mu_{1, 3}}\,\overline{\mu_{5, 5^{(1)}}}u-\overline{\mu_{2, 3}}-\overline{\left(\frac{\mu_{3, 5}}{t}\right)}u\right\}x_2^2x_3 \\
        	-\left(1+\overline{\lambda_{1, 5}^{(1)}}u\right)\overline{\mu_{3, 4}}x_2x_3x_4-\overline{\left(\frac{\lambda_{1, 3}}{t}\right)}\overline{\mu_{5, 5}^{(1)}}u^2x_2^2x_3 \\
        	+\left\{\overline{\left(\frac{\lambda_{2, 3}}{t}\right)}+\overline{\lambda_{3, 5}^{(2)}}u\right\}ux_2^2x_3
        	+\overline{\left(\frac{\lambda_{3, 4}}{t}\right)}ux_2x_3x_4.
        \end{align*}
        To eliminate these, we apply the following coordinate change:
        \begin{align*}
        	x_3\mapsto x_3-\frac{1}{2}\Big[\,\overline{\lambda_{3, 5}^{(1)}}ux_2+\left(1+\overline{\lambda_{1, 5}^{(1)}}u\right)\left\{\overline{\mu_{1, 3}}\,\overline{\mu_{5, 5^{(1)}}}u-\overline{\mu_{2, 3}}-\overline{\left(\frac{\mu_{3, 5}}{t}\right)}u\right\}x_2^2 \\
        	-\left(1+\overline{\lambda_{1, 5}^{(1)}}u\right)\overline{\mu_{3, 4}}x_2x_4
        	-\overline{\left(\frac{\lambda_{1, 3}}{t}\right)}\overline{\mu_{5, 5}^{(1)}}u^2x_2^2 \\
        	+\left\{\overline{\left(\frac{\lambda_{2, 3}}{t}\right)}+\overline{\lambda_{3, 5}^{(2)}}u\right\}ux_2^2+\overline{\left(\frac{\lambda_{3, 4}}{t}\right)}ux_2x_4\Big].
        \end{align*}
        By this coordinate change, the coefficients of the monomials $x_2x_4$ and $x_2x_4^2$ are not changed.
        
        If $\overline{\lambda_{4, 5}^{(1)}}\neq0$, the coefficient of $x_2x_4$ is nonzero for general $u\in k$. Thus $P$ is a $cA$-singularity.
        
        Assume that $\overline{\lambda_{4, 5}^{(1)}}=0$, i.e., $t^2\mid\lambda_{4, 5}$. If $\lambda_{4, 4}$ is divisible by $t^2$, for a weight system $\rho=(1, 1, 1, 0, 0)$ we have
        $$\mult_\rho(\cP)\leq2+1=3,$$
        so $\rho$ destabilizes $\cP$. Therefore we have $t^2\nmid\lambda_{4, 4}$ i.e., $\overline{\left(\lambda_{4, 4}/t\right)}$ is nonzero. Then, since all terms of the degree $3$ part in the above is divisible by $x_2$, it cannot be a cube. Thus $P$ is a $cD$-singularity in this case.
      \end{proof}

\end{document}